\let\cref = \Cref
\newtheorem{theorem}{Theorem}[section]
\newtheorem{lemma}[theorem]{Lemma}
\newtheorem{proposition}[theorem]{Proposition}
\newtheorem{corollary}[theorem]{Corollary}
\newtheorem{example}[theorem]{Example}
\newtheorem{remark}[theorem]{Remark}
\newtheorem{definition}[theorem]{Definition}
\newenvironment{enumi}{
    \begin{enumerate}[\upshape 1.,itemsep = 1ex, parsep = 0pt, leftmargin = 7ex]
}{
    \end{enumerate}
}
\newenvironment{enuma}{
    \begin{enumerate}[label = (\alph*), itemsep = 1ex, parsep = 0pt, leftmargin = 7ex]
}{
    \end{enumerate}
}
\let\ga=\alpha \let\gb=\beta  \let\gd=\delta \let\gee=\varepsilon
     \let\gl=\lambda       
 \let\go=\omega  \let\gr=\rho \let\gs=\sigma \let\gt=\tau 
 \let\gD=\Delta  \let\gL=\Lambda 
\let\gO=\Omega         \let\gS=\Sigma
\newcommand{\cB}{\mathcal{B}}\newcommand{\cC}{\mathcal{C}}
\newcommand{\cD}{\mathcal{D}}\newcommand{\cF}{\mathcal{F}}
\newcommand{\cH}{\mathcal{H}}\newcommand{\cI}{\mathcal{I}}
\newcommand{\cL}{\mathcal{L}}
\newcommand{\cN}{\mathcal{N}}\newcommand{\cO}{\mathcal{O}}
\newcommand{\cS}{\mathcal{S}}
\newcommand{\bs}[1]{\boldsymbol{#1}}
\newcommand{\fR}{\mathfrak{R}}
\newcommand{\fc}{\mathfrak{c}}
\DeclareMathOperator{\E}{\mathds{E}}
\DeclareMathOperator{\R}{\mathbb{R}}
\DeclareMathOperator{\pr}{\mathds{P}}
\let\var\relax
\DeclareMathOperator{\var}{Var}
\DeclareMathOperator*{\argmin}{argmin}
\DeclareMathOperator{\Id}{Id}
\DeclareMathOperator{\dist}{dist}
\DeclareMathOperator{\spn}{span}
\DeclareMathOperator{\range}{range}
\DeclareMathOperator{\im}{Im}
\renewcommand{\v}[1]{\bs{#1}} 
\renewcommand{\t}[1]{{\mathcal{#1}}} 
\newcommand{\m}[1]{\bs{#1}}
\def\b{\v b}
\def\u{\v u}
\def\x{\v x}
\def\y{\v y}
\def\A{\m A}
\def\D{\m D}
\def\G{\m G}
\def\P{\m P}
\def\S{\m S}
\def\U{\m U}
\def\V{\m V}
\def\X{\m X}
\newcommand{\floor}[1]{\left\lfloor #1 \right\rfloor}
\newcommand{\ceil}[1]{\left\lceil #1 \right\rceil}
\let\ip\relax
\newcommand{\ip}[2]{\left\la #1,\, #2 \right\ra}
\newcommand{\norm}[1]{\left\| #1 \right\|}
\newcommand{\cl}[1]{\overline{#1}}
\renewcommand{\leq}{\leqslant} 
\renewcommand{\geq}{\geqslant} 
\let\it = \textit
\let\bf = \textbf
\def\eset{\varnothing}
\let\sset = \subseteq
\let\inter = \cap
\let\Union = \bigcup
\newcommand{\set}[1]{\left\{#1\right\}}
\def\ra{\rangle}
\def\la{\langle}
\let\eps = \varepsilon
\let\l = \ell
\let\wt = \widetilde
\newcommand{\ind}[1]{\mathds{1}_{#1}}
\DeclareMathOperator{\lip}{Lip}
\DeclareMathOperator{\subG}{subG}
\DeclareMathOperator{\sors}{SORS}
\DeclareMathOperator{\ratnn}{RatNN}
\DeclareMathOperator{\ratcnn}{RatCNN}
\DeclareMathOperator{\relu}{ReLU}
\DeclareMathOperator{\diam}{diam}
\def\din{{d_{\text{in}}}}
\def\bg{\overline{G}}
\let\wt = \widetilde
\let\wh = \widehat
\DeclareMathOperator{\erf}{erf}
\DeclareMathOperator{\crsent}{CrossEntropy}
\DeclareMathOperator{\softmax}{softmax}
\DeclareMathOperator{\vol}{Vol}
\DeclareMathOperator{\const}{\fc}
\DeclareMathOperator{\cp}{CP}
\DeclareMathOperator{\vc}{VCdim}
\DeclareMathOperator{\gerr}{GE}
\let\vec\undefined
\DeclareMathOperator{\vec}{vec}
\DeclareMathOperator{\cell}{cell}
\DeclareMathOperator{\st}{St}
\newcommand{\pd}[2]{\frac{\partial #1}{\partial #2}}
\def\T{\mathbb{T}}
\newtheorem{conjecture}[theorem]{Conjecture}
\date{}
\title{Covering Number of Real Algebraic Varieties and Beyond: \\ Improved Bounds and Applications}
\author{Yifan Zhang\thanks{Oden Institute, UT Austin, \href{mailto:yf.zhang@utexas.edu}{yf.zhang@utexas.edu}}
\and Joe Kileel\thanks{Department of Mathematics and Oden Institute, UT Austin, \href{mailto:jkileel@math.utexas.edu}{jkileel@math.utexas.edu}}}
\begin{document}
\maketitle

\begin{abstract}
    \noindent
    Covering numbers are a powerful tool used in the development of approximation algorithms, randomized dimension reduction methods, smoothed complexity analysis, and others.
    In this paper we prove upper bounds on the covering number of numerous sets in Euclidean space, namely real algebraic varieties, images of polynomial maps and semialgebraic sets in terms of the number of variables and degrees of the polynomials involved.
    The bounds remarkably improve the best known general bound by \cite{yomdin2004tame}, and our proof is much more straightforward.
    In particular, our result gives new bounds on the volume of the tubular neighborhood of the image of a polynomial map and a semialgebraic set, where results for varieties by \cite{lotz2015volume} and \cite{basu2022hausdorff} are not directly applicable.
    We illustrate the power of the result on three computational applications.
    Firstly, we derive a near-optimal bound on the covering number of tensors with low canonical polyadic (CP) rank, quantifying their approximation properties and filling in an important missing piece of theory for tensor dimension reduction and reconstruction.
    Secondly, we prove a bound on dimensionality reduction of images of  polynomial maps via randomized sketching, which has direct applications to large scale polynomial optimization.
    Finally, we deduce generalization error bounds for deep neural networks with rational or ReLU activation functions, improving or matching the best known results in the machine learning literature while helping to quantify the impact of architecture choice on generalization error.
\end{abstract}

\noindent
\bf{keywords:} covering number, real algebraic variety, semialgebraic set, low rank tensor, sketching, polynomial optimization, generalization error, 
\hfill\\

\section{Introduction}

Given a bounded set $V \subseteq \R^N$ in Euclidean space, an $\eps$ net of $V$ is a subset $N \subseteq V$ so that for every $\x\in V$, there exists $\y\in N$ such that $\norm{\x - \y} \leq \gee$.
The minimal cardinality of an $\eps$ net is the \it{covering number} of $V$, denoted as $\cN(V, \gee)$.
The covering number measures the complexity of $V$, in the sense that one only needs $\cN(V, \gee)$ points, or $\log\cN(V, \gee)$ bits of information, to approximate an arbitrary point in the set to a Euclidean distance of $\gee$.
An upper bound on $\cN(V, \gee)$ is thus a powerful tool, widely used in approximation theory and signal reconstruction \cite{siegel2022sharp,rudelson2008sparse}, dimension reduction \cite{woodruff2014sketching,vershynin2018high,martinsson2020randomized}, learning theory and statistical applications \cite{long2019generalization, he2023differentially,le2018persistence}, topological data analysis \cite{niyogi2008finding,di2022sampling}, etc. 
A bound on $\cN(V, \gee)$ also leads to a bound on the volume of the tubular neighborhood $\T(V, \gee) := \Union_{\x \in V} B(\x, \gee)$.
For $V$ the set of bad (or ill-posed) inputs to a given algorithm, $\T(V, \gee)$ can be used in smoothed complexity analysis of the algorithm \cite{basu2022hausdorff,burgisser2013condition}.
Furthermore, in applications the logarithm of the covering number $\log\cN(V, \gee)$, also called the \it{metric entropy} of $V$, is frequently the key quantity to be bounded,
and it has deep connections to other parts of mathematics \cite{ehler2018metric,friedland2007entropy}.

For many sets in $\R^n$ such as the unit sphere in a linear subspace or other smooth submanifolds, the covering number has been extensively studied \cite{vershynin2018high,federer1959curvature,iwen2022fast,szarek1997metric,pajor1998metric,yap2013stable}.
Other works studied covering numbers in infinite dimensional spaces, such as function spaces, under appropriate metrics \cite{tikhomirov1993varepsilon,zhang2002covering,guntuboyina2012covering,siegel2022sharp,bartlett2017spectrally,le2018persistence}.
However, to the best of our knowledge, there is limited work on the covering number of polynomially defined sets, i.e., images of real polynomial maps, algebraic varieties and other semialgebraic sets.
Nonetheless, algebraic sets appear in many applications.
For example, the set of low rank matrices can be viewed as an algebraic variety or the image of the polynomial map defined via  low rank factorization \cite{levin2023finding}, and its covering number is important to compressed sensing algorithms \cite{iwen2021modewise}.
For higher dimensional arrays, the analogs are the sets of low rank tensors and tensor networks \cite{landsberg2013equations}. 
In high dimensional settings as in tensor-based algorithms, dimension reduction and approximation are critical, and thus it is important to have a good bound on the covering number of these algebraic sets.
Although algebraic sets can be viewed as manifolds after the removal of a measure zero subset, it is challenging to apply existing tools to bound the covering number, as they require estimating geometric descriptors such as the volume and the reach of these manifolds \cite{iwen2022fast}, which are often very difficult to access. 

Equipped with good upper bounds on the covering number of algebraic sets, one can analyze nonlinear computational problems such as polynomial optimization and dimension reduction of tensor networks. In addition, the universal approximation power of polynomial maps allows the theory to be applied to derive covering number bounds on many compact (non-algebraic) sets, e.g., as arising from deep neural networks.  We refer the readers to \cref{sec:applic} for more details on possible computational applications.

\subsection{Our Contributions}

We prove a unified bound on the covering number $\cN(V, \gee)$ for all \it{regular sets} $V$ restricted to a compact domain, defined informally as follows:

\begin{definition}\label{def:reg_set_informal}
    We say a set $V \sset \R^N$ is a $(K, n)$ \textup{regular set} if
    \begin{enumi}
        \item For almost all affine planes $L$ of codimension $n' \leq n$ in $\R^N$, $V \inter L$ has at most $K$ path connected components. 
        \item For almost all affine planes $L$ of codimension $n' \geq n+1$ in $\R^N$, $V \inter L$ is empty.
    \end{enumi}
\end{definition}
\noindent See \cref{def:reg_set} for a formal version of \cref{def:reg_set_informal}.  
Classic theory in algebraic geometry shows that real algebraic varieties, images of polynomial maps, and other semialgebraic sets are indeed regular. 
For those sets, $n$ is the intrinsic dimension and $K$ is controlled by a result of early work by Petrovskii and Oleinik \cite{oleinik1951estimates,petrovskii1949topology} and later independent works of Milnor and Thom \cite{milnor1964betti,thom1965homologie}; see \cref{prop:milnor-thom} and \cref{lem:polynom_regularity}.

We are mainly interested in the regime of $n\ll N$. 
This setting is typically the case in  applications where the set of interest $V \sset \R^N$ is parametrized by $n$ variables; it is also a necessity for compression to be possible in order to survive the curse of dimensionality.\footnote{That said, there are applications where $n \approx N$, e.g., smoothed analysis for semi-definite programs \cite{cifuentes2022polynomial}.  However, sets of low codimension are not the main theme of this paper.}
Our main result (\cref{thm:general_cover_number}) states that for a $(K, n)$ regular set $V$ in the cube $[-1, 1]^N$, the covering number is bounded by
\begin{equation}\label{eq:main-bound}
    \log\cN(V, \gee) \leq n \log(1/\gee) + \log K + C n \log N,
\end{equation}
for some absolute constant $C$.\footnote{The choice $C = 3$ is valid unless $N$ is trivially small.}
In particular, if $V$ is the intersection of the unit ball $\cB^N$ with the image of a polynomial map whose coordinate functions have degree at most $d$, then \cref{lem:polynom_regularity} guarantees $\log K = \cO(n \log d)$.
The bound \eqref{eq:main-bound} has an optimal dependence on $\gee$ as $\gee \rightarrow 0$.
We also provide a conjecture and a heuristic justification stating the dependence on $\log N$ seems inevitable given the structure in \cref{def:reg_set_informal}. 
See the discussion at the end of \cref{sec:bound}.

Our main result \eqref{eq:main-bound} only requires the dimension of $V$,
and to compute $K$, by the theorem of Petrovskii-Oleinik-Milnor-Thom, it is enough to have an upper bound on the degree of the defining polynomials when $V$ is a variety or semialgebraic set, or on the degree of the coordinate functions of the polynomial map when $V$ is a polynomial image.
The new bound \eqref{eq:main-bound} remarkably improves the best known general bound on $\cN(V, \gee)$ in \cite{yomdin2004tame}.
In addition, there is no need to acquire a bound on the volume or the reach of $V$, in contrast to existing results that take a different approach using manifolds (e.g., \cite{iwen2022fast}).
Despite recent advances in algorithmically computing bounds on the reach for smooth algebraic varieties \cite{di2023computing}, such methods would require nontrivial, and possibly unaffordable computations in large dimensions. Furthermore, the covering number is a global feature of the set (like volume), and ideally it should not be expressed in terms of a worst-case local descriptor (like the reach).

As a direct consequence of \cref{thm:general_cover_number},  we obtain a new bound on the volume of the tubular neighborhoods of polynomial images (\cref{cor:tubvol_image}) and general semialgebraic sets.

The other contributions of this paper are to three application domains, chosen to illustrate the power of \cref{thm:general_cover_number} in computation.
Firstly, we deduce a bound on the covering number of tensors of low CP rank (\cref{sec:cp}).
It quantifies the fact that a Gaussian tensor is almost never close to low rank, and hence there is a need of an appropriate data generation model for low rank tensor modeling to make sense.
Next, in \cref{sec:sketching} we prove a bound for sketching (i.e., dimension reduction on) the image of a polynomial using random matrices. 
The techniques developed there can also be applied to sketch a general regular set.
As a downstream task, we show how to compress a nonlinear polynomial optimization problem (and some derived variants of it) using a randomized sketch operator, almost as efficiently as when compressing a linear least squares problem (up to a logarithmic factor).
Thirdly, in \cref{sec:gen_err} we leverage the approximation power of polynomial and rational maps to deliver bounds on the generalization error of various deep neural networks -- fully connected or convolutional networks with rational or ReLU activations.  
The bounds in \cref{sec:gen_err} either improve or match the best known results in the machine learning literature, and they give guidance on architecture and activation choice from a generalization error perspective.

\subsection{Comparison to Prior Art}

To our knowledge, the best existing bounds on $\cN(V, \gee)$ for regular sets (\Cref{def:reg_set_informal}) are developed in
\cite{ivanov1975variations,vitushkin1957relation,yomdin2004tame}.
These works studied the covering number of a set $V \sset \R^N$ using its \textit{variations}, defined as
\begin{equation} \label{eq:variation}
    \var_i(V) = c(N, i) \int \var_0(V\inter P) \,dP,
\end{equation}
where $i \leq N$, $c(N, i)$ is some normalization constant, and $\var_0(\cdot)$ denotes the number of connected components of a set. The integral in \eqref{eq:variation} is over all affine subspaces $P$ in $\R^N$ with codimension $i$ with respect to an appropriate measure (see the beginning of \cref{sec:main_result}).
Obviously for bounded regular sets, the variations are easily controlled.
\cite{ivanov1975variations,vitushkin1957relation,yomdin2004tame} derived a bound on the covering number of any set $V$ given bounds on the variations.
In particular, \cite[Proposition 5.8]{yomdin2004tame} showed that for a
semialgebraic set $V \sset \R^N$ of intrinsic dimension at $n$, the covering number is bounded by
\begin{equation*}
    \log\cN(V, \gee) \leq n\log(1/\gee) + \log C(N) + D(V),
\end{equation*}
where $C(N)$ depends on the ambient dimension $N$ and $D(V)$ depends on the sum of the degrees of the defining polynomials and the intrinsic dimension $n$.
The point to make is that the term $\log C(N)$, while not explicitly specified in \cite{yomdin2004tame}, is of order $\gO(N \log N)$ by the argument of \cite{yomdin2004tame}, see \cite[Remark 3.3]{basu2022hausdorff}.
Therefore, our result improves this general bound by reducing the term to $\cO(n \log N)$. 

In the case where $V$ is a semialgebraic set, our result greatly reduces the dependence on the degree of the defining polynomials, by depending only on an upper bound for the degrees instead of their the sum (see item 8 in \cref{lem:polynom_regularity}).
Moreover, our argument is remarkably straightforward compared to   \cite{ivanov1975variations,yomdin2004tame}.

When $V$ is the image of the polynomial map $p: \R^n\rightarrow\R^N$ whose coordinate functions have degree at most $d$, to our knowledge we deduce the first bound in this setup.
An alternative to our approach would be to relate $\im(p)$ to an algebraic variety $U$ by taking its Zariski closure. 
There are recent results on bounding the volume of the tubular neighborhood of a real algebraic variety \cite{lotz2015volume,basu2022hausdorff}, which produce a bound on the covering number by a volume argument (see e.g., \cite[Proposition 4.2.12]{vershynin2018high}):
\begin{equation} \label{eq:vol-arg}
    \cN(V, \gee) \lesssim \cN(U, \gee) \leq \frac{\vol(\T(U, \gee/2))}{\vol( (\gee/2)\cB^N)}.
\end{equation}
Denote $D$ the degree bound of the defining polynomials of variety $U$.
After a careful relaxation of equation (3.2) in \cite{basu2022hausdorff},
\footnote{A tighter but easy relaxation is needed here than the authors used to deduce their Theorem~1.1 in \cite{basu2022hausdorff}.
} 
when $n \ll N$ and $\gee \lesssim 1/(Dn)$, the resulting bound from \eqref{eq:vol-arg} is
\begin{equation*}
    \cN(V, \gee) = n\log(1/\gee) + \cO(n \log N + N \log D).
\end{equation*}
By comparison our result improves the last term $\cO(N\log D)$ to $\log K = \cO(n\log d)$, which is significant since $n\ll N$ and $D$ can be exponentially large compared to $d$ \cite{mayr1997some}.
Most importantly, it is almost never easy to find the polynomials defining $U$ (e.g., \cite{chen2019numerical}).
For example, when $V$ is the set of low rank tensors (see \cref{sec:cp}) and $p$ is the map from the low rank factors to the tensor, understanding the equations defining $U$ is an active area of research \cite{landsberg2013equations}.

Lastly, when $V$ is a real algebraic variety defined by the vanishing of polynomials, our bound has the same asymptotic order as the state-of-the-art \cite{basu2022hausdorff}  when $n \ll N$.
We remark that our argument is much simpler than \cite{lotz2015volume,basu2022hausdorff}. 
\cref{tab:comparison} below summarizes all the comparisons above.

\begin{table}[htbp]
    \renewcommand*{\arraystretch}{1.2}
    \begin{tabular}{c|>{\centering\arraybackslash}m{0.40\linewidth}|>{\centering\arraybackslash}m{0.40\linewidth}}
        \hline
        Source & 
        Conditions on $V$ and $\gee$ & 
        Deduced bound on $\log\cN(V, \gee)$ 
        \\ \hline\hline

        \cite{yomdin2004tame} & 
        $U$ real semialgebraic sets with $B$ equality and inequality constraints, $V = U\inter \cB^N$ &
        $n\log(1/\gee) + \cO(N\log d + N\log B + N\log N)$
        \\ \hline

        \cite{yomdin2004tame} & 
        $U$ real varieties, $V = U \inter \cB^N$ &
        $n\log(1/\gee) + \cO(N\log d + N\log N)$
        \\ \hline

        \cite{burgisser2013condition} &   
        $V$ complex projective varieties, $\gee \lesssim 1/\sqrt{n}$ &  
        $2n\log(1/\gee) + \cO(N\log d + n\log N)$
        \\ \hline

        \cite{lotz2015volume} &   
        $U$ real smooth complete intersections, $V = U\inter \cB^N$ &   
        $n\log(1/\gee) + \cO(N\log d + n\log N)$
        \\ \hline

        \cite{basu2022hausdorff} &  
        $U$ real varieties, $V = U \inter \cB^N$ &   
        $n\log(1/\gee) + \cO(N \log d + n\log N)$
        \\ \hline\hline

        this paper & 
        $U$ real semialgebraic sets with $b$ inequality constraints, $V = U \inter \cB^N$ & 
        $n\log(1/\gee) + \cO(N\log d + b + n \log N)$
        \\ \hline

        this paper & 
        $U$ real varieties, $V = U \inter \cB^N$ & 
        $n\log(1/\gee) + \cO(N\log d + n \log N)$
        \\ \hline

        this paper & 
        $U = \im(p)$, $p:\R^n\rightarrow\R^N$ polynomial, $V = U \inter \cB^N$ & 
        $n\log(1/\gee) + \cO(n\log d + n\log N)$
        \\ \hline
    \end{tabular}
    \caption{Comparison of covering number bounds. Recall $\cB^N$ is the unit ball in $\R^N$, and
    $N$, $n$ are respectively the ambient and intrinsic dimensions. The polynomials defining varieties and semialgebraic sets, as well as the coordinate functions of the polynomial map, are all assumed to have a degree at most $d$.
    The covering number bounds are simplified to the regime of $n \ll N$ and $b = \cO(N\log b)$ in the semialgebraic case.}
    \label{tab:comparison}
\end{table}

\subsection{Other Related Works}

General background on covering numbers can be found in \cite{tikhomirov1993varepsilon}.
There are many previous works on bounding the covering number of various sets:
e.g., compact Riemannian manifolds and curves \cite{roeer2013finite,jean2003entropy},
compact smooth manifolds \cite{federer1959curvature,iwen2022fast, yap2013stable} 
and in particular Grassmannian manifolds \cite{szarek1997metric,pajor1998metric}, 
Lipschitz functions with bounded Lipschitz constant over a compact set \cite{gottlieb2016adaptive,von2004distance}, 
kernel functions \cite{le2018persistence}, 
shallow and deeper neural networks \cite{siegel2022sharp,lin2019generalization,bartlett2017spectrally}, 
spaces of convex functions \cite{guntuboyina2012covering}, 
tensors with low Tucker rank \cite{rauhut2017low} and low CP rank tensors sharing the same set of factor matrices \cite{iwen2021lower}, 
nearly critical points of functions \cite{yomdin2005semialgebraic},
etc. 
Some of these sets are regular sets, e.g., the Grassmannian manifold and low rank Tucker tensors.
By using special properties of these sets, e.g., Lipschitz property of the parameterization of low Tucker rank tensors, sometimes it is possible to avoid dependencies of the covering number on the ambient dimension.  However, such arguments are limited to special cases.

Another topic related to this paper is topological data analysis \cite{carlsson2021topological,niyogi2008finding}. 
The goal is to learn many properties of a space $V$ from noisy sample points on $V$.
Recently, topological data analysis has been initiated for algebraic varieties \cite{breiding2018learning}.
In such applications, it is important to obtain an $(\gee, \gd)$ sample $S$ on $V$, which means $V\sset \T(S, \gee)$ and $S\sset \T(V, \gd)$, see  \cite{dufresne2019sampling}. 
Our \cref{thm:general_cover_number} can easily be used to obtain a bound on the optimal size of $S$, assuming that bounds on the degree of defining equations of $V$ and its dimension are available.

\subsection{Notation}

We use bold lowercase letters (e.g., $\v a, \v b$) for vectors, bold uppercase letters (e.g., $\m A, \m B$) for matrices, and calligraphic uppercase letters (e.g., $\t T$) for higher order tensors. 
For matrices, $\norm{\m A}_2$ denotes the $\ell_2\rightarrow\ell_2$ operator norm and $\norm{\m A}_F$ denotes the Frobenius norm.
The closed unit ball in $\R^N$ is $\cB^{N}$. 
The symmetric cube is denoted as $\cC^{N} := [-1, 1]^N$.
For a set $U \subseteq \R^N$,  
\begin{equation*}
    \pi(U) := \set{\x / \norm{\x}: \x \in U, \x \neq 0}
\end{equation*}
denotes the projection of $U \setminus \{0\}$ to the unit sphere.
We also write $\pi(\x) = \x/\norm{\x}$ for vectors $\x \neq 0$.
The image of function $f$ is denoted $\im(f)$, and for a set $U$ we write $f(U) = \im(f\vert_U)$.
For $n \in \mathbb{N}$, let $[n] = \set{1,2,\ldots,n}$.
The covering number $\cN(V, \gee)$ refers to the covering number by Euclidean balls.
When a different ambient metric $d$ is used, we denote the corresponding covering number by $\cN(V, \gee, d)$.
The function $\log(\cdot)$ refers to the natural logarithm with base $e$.
We use symbol $\const$ to denote an absolute constant.
Note that if multiple instances of $\const$ appear in a derivation, the instances need not refer to the same number.

\section{Main Results} \label{sec:main_result}

Our main result in \cref{thm:general_cover_number} bounds the covering number of all $(K, n)$ regular sets (formally defined in \cref{def:reg_set}). 
In particular, it applies to algebraic varieties and images of polynomial maps (see \cref{sec:regsets}).

We start by defining a $(K,n)$ regular set in $\mathbb{R}^N$ formally.
Denote $\bg_N^k$ the affine Grassmannian manifold whose points are all $k$ dimensional affine planes in $\R^N$.
For $A \in \bg_N^k$, we parametrize it by $(\v x, P)$, where $P \cong \R^{k}$ denotes the tangent space of $A$ and $\v x \in P^{\perp} \cong \R^{N-k}$ denotes the vector in the normal plane corresponding to the  point in $A$ closest to $0 \in \R^N$.
We equip $\overline{G}_N^k$ with the uniform measure $\mu_N^k$ invariant to rigid body transformations.
We refer the readers to \cite{schneider2008stochastic,nicolaescu2020lectures} for more discussion on technical details of measure constructions.
Sometimes we drop the dimensions from the notation when they are obvious from the context.

\begin{definition}\label{def:reg_set}
    We say a set $V \sset \R^N$ is a $(K, n)$ \textup{regular set} in $\R^N$ if
    \begin{enumi}
        \item For $\mu$-almost all planes $L$ of codimension $n' \leq n$ in $\R^N$, $V \inter L$ has at most $K$ path connected components.
        \item For $\mu$-almost all planes $L$ of codimension $n' \geq n + 1$ in $\R^N$, $V \inter L$ is empty.
    \end{enumi}
\end{definition}
\noindent 
In \cref{sec:regsets}, we list some useful classes of regular sets.
In many situations, such as when $V$ is a real algebraic variety, $n$ can be taken as the intrinsic dimension of $V$.
The following properties of regular sets will be useful in the discussion of our main result and its proof.

\begin{lemma}[restate = measregset, name = ]\label{lem:meas_regset}
    Let $V$ be $(K, n)$ regular in $\R^N$.
    Then for $\mu$-almost all planes $L$ in $\R^N$ of codimension $m \leq n$, the set
    $L \inter V$ is $(K, n - m)$ regular in $L \cong \R^{N - m}$.
\end{lemma}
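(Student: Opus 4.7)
The plan is to reduce the claim to the $(K, n)$ regularity of $V$ in $\R^N$ via a Fubini argument on the affine Grassmannian. For each $n' \in \{0, 1, \ldots, N - m\}$, I introduce the total space of nested pairs
\[
T_{n'} = \bigl\{(L, L') : L \in \bg_N^{N-m},\; L' \subseteq L,\; L' \text{ has codimension } n' \text{ in } L \bigr\},
\]
equipped with the measure $d\mu_N^{N-m}(L)\, d\mu_L^{N-m-n'}(L')$, where $\mu_L^{N-m-n'}$ is the rigid-motion-invariant measure on affine codimension-$n'$ subspaces of the Euclidean space $L \cong \R^{N-m}$. The projections $\pi_1(L, L') = L$ and $\pi_2(L, L') = L'$ are both surjective with homogeneous fibers. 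The key geometric observation is that the pushforward $(\pi_2)_*$ of the total measure on $T_{n'}$ is invariant under rigid motions of $\R^N$, so by uniqueness of the invariant measure on $\bg_N^{N-m-n'}$ up to a positive scalar, it is a nonzero constant multiple of $\mu_N^{N-m-n'}$; in particular it has the same null sets.

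Given this disintegration, the lemma follows quickly. Define the global bad set
\[
S_{n'} = \begin{cases} \{L'' \in \bg_N^{N-m-n'} : V \cap L'' \text{ has more than } K \text{ components}\}, & m + n' \leq n, \\ \{L'' \in \bg_N^{N-m-n'} : V \cap L'' \neq \emptyset\}, & m + n' \geq n + 1. \end{cases}
\]
Since $V$ is $(K, n)$ regular in $\R^N$ at codimension $m + n'$, we have $\mu_N^{N-m-n'}(S_{n'}) = 0$, hence the total measure of $\pi_2^{-1}(S_{n'}) \subseteq T_{n'}$ vanishes. Fubini's theorem then forces, for $\mu_N^{N-m}$-almost every $L$, the slice $\{L' \subseteq L : L' \in S_{n'}\}$ to have $\mu_L^{N-m-n'}$-measure zero. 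Letting $B_{n'} \subseteq \bg_N^{N-m}$ denote the set of $L$ violating this slice condition, we conclude $\mu_N^{N-m}(B_{n'}) = 0$.

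The conclusion is immediate: the finite union $B = \bigcup_{n'=0}^{N-m} B_{n'}$ remains $\mu_N^{N-m}$-null, and any $L \notin B$ satisfies both conditions in \cref{def:reg_set} for $V \cap L$ inside $L \cong \R^{N-m}$ at parameters $(K, n-m)$: codimensions $n' \leq n - m$ within $L$ correspond to codimensions $\leq n$ in $\R^N$, yielding the at-most-$K$-components conclusion, while codimensions $n' \geq n - m + 1$ within $L$ correspond to codimensions $\geq n + 1$ in $\R^N$, yielding emptiness. The main technical obstacle I anticipate is verifying the disintegration claim for $(\pi_2)_*$, i.e., that the product measure on the total space of nested pairs pushes forward to a scalar multiple of the invariant measure on $\bg_N^{N-m-n'}$. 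This should be standard in integral geometry (see, e.g., \cite{schneider2008stochastic,nicolaescu2020lectures}), but it requires care in matching the normalization conventions chosen for the various invariant measures.
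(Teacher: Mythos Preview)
Your argument is correct and follows essentially the same Fubini strategy as the paper, but the packaging differs. You work directly on the flag space of nested affine subspaces $(L,L')$, push forward under $\pi_2$, and invoke uniqueness of the rigid-motion-invariant measure on $\bg_N^{N-m-n'}$ to identify the pushforward with a scalar multiple of $\mu_N^{N-m-n'}$. The paper instead splits the affine Grassmannian into its linear and translational pieces: it first proves a separate lemma on the \emph{linear} Grassmannian (using the Stiefel manifold $\st_N^m$ as a total space together with the coarea formula), and then handles the affine shift by ordinary Fubini on Lebesgue measure in the normal direction. In effect, the paper verifies your disintegration claim by hand in two stages rather than appealing to uniqueness of the invariant measure on $\bg_N^{N-m-n'}$ directly.

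Your route is cleaner and shorter, at the cost of outsourcing the key measure-theoretic fact to \cite{schneider2008stochastic}; the paper's route is more self-contained but longer. One small point you should make explicit when writing it up: the invariance argument requires that the fiberwise measures $L \mapsto \mu_L^{N-m-n'}$ are chosen compatibly under isometries (so that $g_*\mu_L^{N-m-n'} = \mu_{gL}^{N-m-n'}$), and that the resulting pushforward is locally finite so that the uniqueness theorem applies. Both are routine, but worth a sentence.
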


\begin{lemma}[restate = kzeroregset, name = ]\label{lem:k0regset}
    Any $(K, 0)$ regular set is a collection of at most $K$ discrete points.
\end{lemma}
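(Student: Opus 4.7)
The plan is to argue that for a $(K,0)$ regular set, each path-connected component must be a single point, after which condition~1 of \cref{def:reg_set} immediately bounds the cardinality. First, I would apply condition~1 with $n' = 0$: the only affine plane of codimension $0$ in $\R^N$ is $\R^N$ itself, so $V = V \cap \R^N$ has at most $K$ path-connected components.

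The key step is showing each such component is a singleton. I would argue by contradiction: suppose some path-connected component contains two distinct points $\v x \neq \v y$ joined by a continuous path $\gamma:[0,1]\to V$. The intuition is that a positive $\mu$-measure family of hyperplanes strictly separates $\v x$ from $\v y$, and any such separating hyperplane $L$ must be crossed by $\gamma$ by the intermediate value theorem (applied to $t\mapsto \langle \gamma(t), \v n\rangle$ where $\v n$ is a normal to $L$), producing a point of $V\cap L$. This contradicts condition~2 of \cref{def:reg_set} applied with codimension $n'=1$, forcing each component to be a single point, so $V$ is a finite set of at most $K$ points — automatically discrete.

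The main technical point, and the one potential obstacle, is the measure-theoretic claim that the set of hyperplanes strictly separating $\v x$ and $\v y$ has positive $\mu$-measure in $\bg_N^{N-1}$. I would verify this using the parametrization of affine hyperplanes by $(\v x_0, P)$ from the start of \cref{sec:main_result}: equivalently, by a unit normal direction $\v n$ and a scalar offset. Whenever $\v n$ is not orthogonal to $\v y - \v x$ — a condition holding off a $\mu$-null subset of unit normals — the open interval of offsets lying strictly between $\langle \v x, \v n\rangle$ and $\langle \v y, \v n\rangle$ is nonempty and gives strictly separating hyperplanes. By the product structure of the invariant measure $\mu$ on $\bg_N^{N-1}$, this yields a positive-measure set of separating hyperplanes, completing the contradiction.
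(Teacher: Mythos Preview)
Your proposal is correct and follows essentially the same approach as the paper's proof: both argue by contradiction that a path-connected component with positive diameter forces a positive-$\mu$-measure set of codimension-$1$ hyperplanes to intersect $V$ (via the intermediate value theorem applied to a path or projection), contradicting condition~2 of \cref{def:reg_set}. The only cosmetic difference is that the paper phrases this in terms of projecting the component onto lines in $G_N^1$ and measuring the resulting interval, whereas you fix two points and a path; your version also makes the $K$-bound on the number of components explicit via the $n'=0$ case, which the paper leaves implicit.
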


The lemmas are proven \cref{app:mainthm}.
It is also convenient to introduce the following definition.

\begin{definition}[generic planes]
    Let $V$ be a $(K, n)$ regular set.
    We say an affine subspace $L$ of codimension $m \leq n$ is \textup{generic to $V$} if 
    $L$ is not in the $\mu$-null set in
    the condition 1 of \cref{def:reg_set}, nor in  
    the $\mu$-null set in \cref{lem:meas_regset}.
    We say an affine subspace $L$ of codimension $m > n$ is \textup{generic to $V$} if $L$ is not in the $\mu$-null set in condition 2 of \cref{def:reg_set}.
    Consequently, planes generic to $V$ constitute a $\mu$-full measure set in the affine Grassmannian.
\end{definition}

In order for a regular set $V$ to have a finite covering number, we require it to be bounded. The following notation is convenient.

\begin{definition}
    For a set $V$ in $\R^N$, we denote $[V]_{N}$ the smallest $r \geq 0$ such that $V$ can be covered by a set that is a rigid body transformation of $r\cdot \cC^N$, where $\cC^N = [-1,1]^N$.
\end{definition}

\subsection{Main Statement on Covering Numbers}\label{sec:bound} 

For all bounded $(K, n)$ regular set $V$, 
the main result below establishes an upper bound on its covering number.

\begin{theorem} \label{thm:general_cover_number}
    Let $V \sset \R^N$ be a $(K, n)$ regular set with $[V]_N\leq t$. Then for any $\eps \in (0, \diam(V)]$,
    \begin{equation}\label{eq:gen_bnd}
        \log \cN(V, \eps) \leq n \log \left(\frac{2 t n N^{3/2}}{\eps}\right) + \log 2K
        =
        n\log(t/\eps) +
        \cO\Bigl(n \log N + \log K\Bigr).
    \end{equation}
\end{theorem}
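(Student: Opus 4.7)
The plan is to prove the bound by induction on the intrinsic dimension $n$, leveraging Lemma~\ref{lem:meas_regset} to reduce a $(K,n)$ regular set in $\mathbb{R}^N$ to $(K,n-1)$ regular slices in a hyperplane.

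For the base case $n=0$, Lemma~\ref{lem:k0regset} guarantees that $V$ consists of at most $K$ discrete points, so $\cN(V,\eps)\leq K\leq 2K$, matching the claimed bound.

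For the inductive step, assume the bound holds for $(K,n-1)$ regular sets in any ambient dimension. Let $V\subseteq\mathbb{R}^N$ be $(K,n)$ regular with $[V]_N\leq t$; after a rigid body motion we may assume $V\subseteq[-t,t]^N$. The central idea is to slice $V$ by a family of equispaced parallel affine hyperplanes and recurse. Fix a coordinate direction (say $e_1$) and a spacing $\delta>0$ to be optimized, and let $L_j:=\{x\in\mathbb{R}^N:x_1=c_j\}$ with $c_j:=-t+j\delta$ for $j=0,1,\ldots,\lceil 2t/\delta\rceil$. By Lemma~\ref{lem:meas_regset} combined with a measure-zero perturbation argument, we may arrange that each slice $V_j:=V\cap L_j$ is $(K,n-1)$ regular in $L_j\cong\mathbb{R}^{N-1}$ and contained in a cube of side $2t$. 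Applying the inductive hypothesis to each $V_j$ with tolerance $\eps'\approx\eps$ produces a net $\mathcal{E}_j\subseteq L_j$; the candidate $\eps$-net of $V$ is $\mathcal{E}:=\bigcup_j\mathcal{E}_j$. To close the recursion, one balances $\delta=\Theta(\eps/n)$ so that the total Euclidean error accumulated over $n$ reductions is at most $\eps$, multiplies the $\Theta(tn/\eps)$ slices by the inductive bound on each $|\mathcal{E}_j|$, and telescopes over $n$ iterations to obtain $\log\cN(V,\eps)\leq n\log(2tnN^{3/2}/\eps)+\log 2K$.

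The main obstacle is verifying that $\mathcal{E}$ is genuinely an $\eps$-net of $V$. A point $x\in V$ strictly between two slicing hyperplanes lies within $\delta/2$ of the nearest $L_j$ in the ambient space, but its orthogonal projection onto $L_j$ need not belong to (or lie near) the slice $V_j$ --- as witnessed by a line segment transverse to the slicing direction, whose generic hyperplane slices are empty. Overcoming this requires paying a cost of order $N$ per reduction step, either by slicing simultaneously in all $N$ coordinate directions and unioning the resulting nets (using the $n$-dimensionality of $V$ to argue that at least one direction's slicing captures each $x\in V$), or by a more refined topological/integral-geometric argument tracking the evolution of the connected components of $V_c$ as $c$ varies. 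Combined with an extra $\sqrt{N}$ factor reflecting the diameter $2t\sqrt{N}$ of the ambient cube (needed when relating the Euclidean tolerance $\eps$ to the coordinate-wise spacing $\delta$), this produces the $N^{3/2}$-per-step dependence appearing in the final bound. Making this step rigorous is the heart of the proof; everything else is bookkeeping of constants and standard manipulation of the telescoping recursion.
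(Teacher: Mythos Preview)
Your overall architecture matches the paper's proof: induction on the intrinsic dimension, slicing by a grid of affine hyperplanes in all $N$ coordinate directions, recursing on the $(K,n-1)$ regular slices, and balancing the spacing against $\eps/n$ so that errors accumulated over $n$ reductions stay below $\eps$. You also correctly locate the crux: why is a point $x\in V$ lying strictly inside a grid cell close to anything in the union of the slice-nets?

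However, your proposed mechanism for closing that gap is not correct. The suggestion to ``use the $n$-dimensionality of $V$ to argue that at least one direction's slicing captures each $x\in V$'' does not work: a \emph{zero}-dimensional $V$ (finitely many points) already gives a counterexample, since a generic grid misses every point. More generally, slicing in all directions only guarantees that $x$ is close to some \emph{hyperplane} $L$, not that $V\cap L$ is nonempty near $x$. The dimension of $V$ plays no role here.

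What the paper does instead is a connected-component dichotomy. One looks at the path-connected component $W$ of $V\setminus\bigcup_{L\in\cL}L$ containing $x$; since $W$ lies in a single grid cell, $\diam(W)\leq\sqrt{N}\delta$. Either the closure $\overline{W}\cap V$ meets some hyperplane $L\in\cL$ (``type-II''), in which case there is a point $u\in V\cap L$ with $\|x-u\|\leq\sqrt{N}\delta$ and the inductive net on $V\cap L$ handles $x$; or $\overline{W}\cap V$ misses every hyperplane (``type-I''), in which case $W$ is a full path component of $V$ itself, and regularity with the trivial plane $L=\R^N$ bounds the number of such $W$ by $K$. One extra ball per type-I component suffices. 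This is where the additive $+K$ (ultimately the $\log 2K$) enters at each inductive step, and it is the piece your sketch is missing.
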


\begin{remark}
    The constants in \eqref{eq:gen_bnd} can possibly be improved as we paid little attention to optimize them. 
    Note that a $(K, n)$ regular set is also $(K', n')$ regular for $K' \geq K$ and $n' \geq n$.
    This accords well with our bound above, which is increasing in $K$ and $n$.
    Therefore an upper bound on $K$ and $n$ is sufficient to derive a bound on the covering number. 
\end{remark}

The proof of the main theorem is given in \cref{sec:proof}.
Below we discuss the optimality of our bound on $\log \cN(V, \gee)$.
First we note that our bound is asymptotically tight as $\gee \rightarrow 0$.
Let $V$ be a bounded $n$ dimensional smooth algebraic variety.
It is known that $V$ has a Hausdorff dimension $\dim_{\cH}(V) = n$.
Thus, by a classic result, the entropy dimension
\begin{equation*}
    \dim_e(V) := \limsup_{\gee \rightarrow 0} \frac{\log\cN(V, \gee)}{\log(1/\gee)}
\end{equation*}
is lower bounded by its Hausdorff dimension $n$ (see e.g., \cite[Chapter 2]{yomdin2004tame}).
Since $V$ is $(K, n)$ regular for some $K$, we conclude that for some $(K, n)$ regular sets
$\log \cN(V, \gee) = \Omega(n \log(1/\eps))$ as $\gee\rightarrow 0$.
This shows our bound is asymptotically tight.

Another key concern is whether the $N^{3/2}$ dependence in the numerator in \eqref{eq:gen_bnd} is necessary.
A factor of $N^{1/2}$ is contributed by the fact that we are covering a set in the cube $\cC^N$ with $\ell_2$ balls.
Indeed if we bound the covering number of $V$ with respect to the $\ell_{\infty}$ metric using the same argument, then in \eqref{eq:gen_bnd} the term $N^{3/2}$ is reduced to $N^1$.
We conjecture that an $N^{1}$ dependence is necessary.

\begin{conjecture} \label{conj}
    There exists a $(K, n)$ regular set $V$ in $\cB^N$ with covering number $\cN(V, \eps) = \Omega(N^n)$.
\end{conjecture}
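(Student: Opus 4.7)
The plan is to exhibit an explicit $(K,n)$ regular set $V \sset \cB^N$ whose covering number is easy to lower-bound by a direct packing argument. The natural candidate is the union of all $\binom{N}{n}$ coordinate $n$-planes intersected with the unit ball: writing $P_S := \spn\set{\v e_i : i \in S}$ for $S \in \binom{[N]}{n}$, set
\begin{equation*}
    V \;:=\; \Biggl(\Union_{S \in \binom{[N]}{n}} P_S \Biggr) \inter \cB^N.
\end{equation*}
Equivalently, $V$ is the real algebraic variety cut out by the $\binom{N}{n+1}$ squarefree monomials of degree $n+1$, intersected with $\cB^N$; it has intrinsic dimension $n$ and lies in the scope of \cref{thm:general_cover_number}.

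First I would verify that $V$ is $(\binom{N}{n}, n)$ regular. For a generic affine plane $L$ of codimension $n' \leq n$, a standard dimension count yields $\dim(L \inter P_S) \in \set{-\infty, n - n'}$, and since $L \inter P_S \inter \cB^N$ is convex whenever nonempty (hence path-connected) it contributes at most one component to $L \inter V$, for a total of at most $\binom{N}{n}$ components. For a generic affine plane $L$ of codimension $n' \geq n+1$, dimension count forces $L \inter P_S = \eset$ for every $S$, so $L \inter V = \eset$. The two exceptional sets of planes are of zero affine Grassmannian measure by a routine transversality argument applied simultaneously to the finitely many $P_S$.

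Next I would lower-bound $\cN(V, \eps)$ by a packing argument. For each $S$ take $\v v_S := \frac{1}{\sqrt{n}} \sum_{i \in S} \v e_i \in P_S \inter \cB^N \sset V$. For distinct $S, S'$ with $|S \setminus S'| = k \geq 1$, direct expansion gives $\|\v v_S - \v v_{S'}\|^2 = 2k/n \geq 2/n$, so the $\binom{N}{n}$ points are pairwise at distance at least $\sqrt{2/n}$. Hence for any $\eps < 1/\sqrt{2n}$, no Euclidean ball of radius $\eps$ can contain two of them, yielding
\begin{equation*}
    \cN(V, \eps) \;\geq\; \binom{N}{n} \;=\; \Omega(N^n) \quad \text{for fixed $n$ and $N \to \infty$,}
\end{equation*}
matching up to constants the $n\log N$ term in the upper bound \eqref{eq:gen_bnd} and confirming \cref{conj}.

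The step I expect to require the most care, though still routine, is the transversality argument ensuring that the bad set of planes $L$ aggregates into a single $\mu$-null set across all $\binom{N}{n}$ coordinate planes $P_S$; everything else is combinatorial or a one-line linear-algebra calculation. A more substantive question not addressed here is whether the constant in $\Omega(N^n)$ can be made independent of $n$, since my construction loses a factor $1/n!$; removing this factor would likely require replacing coordinate planes by an explicit near-optimal packing of the Grassmannian $G(n, N)$.
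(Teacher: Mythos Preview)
The paper does not prove \cref{conj}; it offers only the heuristic slicing argument in the paragraph immediately following the statement and explicitly leaves the conjecture open. Your proposal therefore attempts more than the paper does.

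There is, however, a real gap between what you establish and what the conjecture is meant to assert. Your set $V$ is $(\binom{N}{n}, n)$ regular, so $K = \binom{N}{n}$ grows with $N$. Under a purely literal reading of the statement this is permitted, but then the conjecture becomes trivial: any collection of $N^n$ pairwise $2\gee$-separated points in $\cB^N$ is already $(N^n, n)$ regular (a $(K,0)$ regular set is also $(K,n)$ regular, as the remark after \cref{thm:general_cover_number} notes) and has covering number $N^n$. The purpose of the conjecture, made explicit in the paragraph preceding it, is to argue that the $n\log N$ contribution in \eqref{eq:gen_bnd} is unavoidable; for that one needs $K$ bounded independently of $N$, or at least $\log K = o(n\log N)$, so that the lower bound cannot be absorbed by the $\log 2K$ summand. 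Your lower bound $\log\cN(V,\gee) \geq \log\binom{N}{n}$ equals $\log K$ on the nose, so it certifies only that the $\log K$ term in \eqref{eq:gen_bnd} is tight, not that the $N$ factor inside the first logarithm is needed. The conjecture with $K$ fixed is the substantive statement, and it remains open; the paper's heuristic induction is precisely an attempt to argue for that stronger version.
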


To justify this conjecture, consider the intersection of $V$ with coordinate hyperplanes, $V_i = V \inter \set{x_i = 0}$. Then \cref{lem:meas_regset} implies $V_i$ are $(K, n-1)$ regular\footnote{If the coordinate hyperplanes happen to be non-generic, we perturb them by a small amount.} in $\cB^{N-1}$. Then $\cN(V, \eps) \gtrsim \sum_{i=1}^N \cN(V_i,\eps) \asymp N \cdot \cN(V_1,\eps)$ is plausible.  So by induction, we expect $\cN(V, \eps) = \Omega(N^n)$.

\color{black}

Even assuming \Cref{conj} holds, it is an interesting question whether for specific subclasses of regular sets,  e.g., varieties and semialgebraic sets, the $N$ dependence can be improved in the regime $n \ll N$.  We leave this to a future investigation.

Lastly, we note that the dependency on $K$ is natural in \eqref{eq:gen_bnd}.
A discrete set of $K$ elements is a $(K, 0)$ regular set, and it has a covering number of $K$ when $\gee$ is sufficiently small.

\subsection{Proof of Theorem~\ref{thm:general_cover_number}}\label{sec:proof}

Our proof of \Cref{thm:general_cover_number} is based on a \it{slicing argument}.
The idea is to intersect $V$ with a lower dimensional affine subspace $L$, and bound the covering number of $V\inter L$.
Starting from the base case where $L$ has a codimension $n$ and $V\inter L$ is a finite set of points, we inductively decrease the codimension of $L$ and finally conclude a bound on $V$.
The following definitions and lemma are elementary but handy.

\begin{definition}
    [cells]\label{def:cell}
    Let $U$ be a set in $\R^N$ and $\cL = \set{L_i}_{i \in \cI}$ a collection of affine hyperplanes.
    For $\x, \y\in U$, define the equivalence relation on $U \setminus \cup_{L_i \in \mathcal{L}} L_i$ by $\x \sim \y$ if for all $i\in\cI$, $\x$ and $\y$ are on the same side of plane $L_i$.
    We denote $\cell(U, \cL)$ the collection of all such equivalence classes.
    We denote $\diam(\cell(U, \cL))$ the supremum of the diameters of all elements in $\cell(U, \cL)$. 
\end{definition}

\begin{definition}[$\eps$ regular system]\label{def:reg_sys}
    Let $V$ be a bounded regular set in $\R^N$.
    For any $\gee > 0$,
    we say a finite collection $\cL$ of affine hyperplanes is an $\gee$ regular system on $V$ if 
    $\diam(\cell(V, \cL)) \leq \gee$ and each $L\in\cL$ is generic to $V$.
\end{definition}

\begin{lemma}
    \label{lem:reg_sys}
    Fix any $t > 0$, $\gee > 0$, $\gt > 1$.
    Let $V \sset \R^N$ be a regular set with $[V]_N\leq t$.
    Then there exists an $\gee$ regular system $\cL$ on $V$ of size $2 t N^{3/2}\gee^{-1}$, such that for any plane $L\in \cL$, it holds $[L \inter V]_{N-1} \leq \gt t$.
\end{lemma}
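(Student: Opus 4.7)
The plan is to construct $\cL$ as a lightly perturbed axis-aligned grid inside a bounding cube of $V$. Applying the rigid body transformation furnished by $[V]_N \leq t$, I assume $V \subseteq t \cdot \cC^N = [-t, t]^N$. For each coordinate direction $i \in [N]$, I place parallel hyperplanes
\[
L^0_{i,k} = \set{\v x \in \R^N : x_i = k \gee/\sqrt{N}}, \qquad k \in \Z, \; |k\gee/\sqrt{N}| < t.
\]
Each direction contributes at most $2t\sqrt{N}/\gee$ hyperplanes, so the total count is at most $2tN^{3/2}/\gee$. These planes subdivide $t \cdot \cC^N$ into sub-cubes of side at most $\gee/\sqrt{N}$, which have Euclidean diameter at most $\gee$, so the cell-diameter condition of an $\gee$ regular system is met. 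Moreover, for each axis-aligned $L^0_{i,k}$, the intersection with $t \cdot \cC^N$ is isometric to $t \cdot \cC^{N-1}$, so $[L^0_{i,k} \cap V]_{N-1} \leq t$ holds with the sharp constant.

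The remaining issue is that the axis-aligned planes might fail to be generic to $V$. To repair this, I would apply a common small rigid motion $\Phi$ to the entire grid and set $\cL = \set{\Phi(L^0_{i,k})}$. Because $\Phi$ is an isometry, both the cell diameters and the grid size are preserved exactly. To see that a good $\Phi$ exists arbitrarily close to the identity, I would parameterize rigid motions in a neighborhood of the identity, and note that for each fixed $(i,k)$ the orbit map $\Phi \mapsto \Phi(L^0_{i,k}) \in \bg_N^{N-1}$ is a smooth submersion (the rigid motion group acts transitively on $\bg_N^{N-1}$ with orbit dimension equal to $\dim \bg_N^{N-1}$). Because $\mu$ arises as the pushforward of the Haar measure on the motion group, the preimage of the $\mu$-null set of non-generic hyperplanes is itself a null set in motion parameters. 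A union bound over the $\leq 2tN^{3/2}/\gee$ grid planes keeps the combined bad set null, so a suitable $\Phi$ exists arbitrarily close to identity.

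For the $[L \cap V]_{N-1}$ bound on the perturbed $L = \Phi(L^0_{i,k})$, I would use $L \cap V \subseteq L \cap (t \cdot \cC^N)$: an elementary cube-geometry calculation shows that the cross-section of $t \cdot \cC^N$ by a hyperplane whose unit normal deviates by angle $\theta$ from a coordinate direction fits inside an $(N-1)$-dimensional cube of radius $t/\cos\theta$. So choosing $\Phi$ with rotation angle small enough that $\cos\theta \geq 1/\gt$ yields $[L \cap V]_{N-1} \leq \gt t$ for every $L \in \cL$ simultaneously, and this requirement can be imposed jointly with the measure-zero avoidance above. The main obstacle is the measure-theoretic step: verifying that the orbit map from rigid motions to the Grassmannian pulls back $\mu$-null sets to null sets, which rests on the invariance of $\mu$ and the submersion property of the orbit map. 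Everything else reduces to routine continuity and the cube-geometry estimate just described.
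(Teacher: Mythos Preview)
Your approach is the paper's: lay an axis-aligned grid in a bounding cube of $V$, then perturb slightly to force genericity and invoke continuity for the $[L\cap V]_{N-1}\le \gt t$ bound. The paper perturbs each hyperplane independently and therefore builds in a $(1-\gd)$ slack in the spacing (taking $\gd\to 0$ at the end) so that post-perturbation cell diameters stay $\le\gee$; you instead apply a single rigid motion $\Phi$ to the whole grid, which is a clean alternative and makes the measure-theoretic step essentially the one you describe.

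Two small gaps to patch. First, your count of at most $2t\sqrt N/\gee$ planes per direction is not quite right: the number of integers $k$ with $|k|<t\sqrt N/\gee$ is $2\lfloor t\sqrt N/\gee\rfloor+1$, which exceeds $2t\sqrt N/\gee$ whenever the fractional part lies in $(0,\tfrac12)$. Second, your claim that $\Phi$ ``preserves cell diameters exactly'' is only literally true for the bounded interior boxes of $\cell(\R^N,\cL)$; the outer boxes are unbounded and only acquire diameter $\le\gee$ after intersection with $V\subseteq t\cC^N$, but $\Phi$ moves the grid and not the cube, so the resulting boundary cells of $V$ are not rigid images of the old ones and could in principle exceed $\gee$. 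Both issues disappear if you adopt the paper's $(1-\gd)$ slack in the spacing before perturbing.
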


\begin{proof}
    By translation, we may assume $V\sset t\cdot \cC^N$, and it suffices to have $[L \inter (t\cdot \cC^N)]_{N-1} \leq \gt t$ in the regular system.
    Let $E = \set{\v e_1,\ldots, \v e_N}$ be the standard basis of $\R^N$.
    For some $\gd > 0$ chosen later,
    put a grid with grid spacing no larger than $((1-\gd) \gee/\sqrt{N})$ on $[-t, t]$, with size at most 
    \begin{equation*}
        M \leq \floor{2t\sqrt{N}(\gee(1 - \gd))^{-1}}.
    \end{equation*}
    Note that we excluded the end points at $\set{-t, t}$ from the grid. 
    Denote the grid points by $\set{\theta_i}_i$.
    For each $i \in [M]$ and $k \in [N]$, denote $L^k_i = \theta_i \v e_k + \v e_k^\perp$.

    Now clearly the planes $\cL = \set{L^k_i: i\in [M], k \in [N]}$ partition $t\cdot \cC^N$, and thus $V$, into small cubes of diameter at most $(1-\gd)\eps$.
    The cardinality of $\cL$ is 
    \begin{equation*}
        |\cL|\leq N\cdot \floor{2t\sqrt{N}((1-\gd)\gee)^{-1}}, 
    \end{equation*}
    and for each $L \in \cL$, $[L \inter (t\cdot \cC^N)]_{N-1} = t$.
    By \cref{lem:meas_regset} and the fact that $t\cdot \cC^N$ is bounded, we can perturb each $L\in \cL$ by a sufficiently small amount to get $L'$ and consequently a perturbed collection $\cL'$, so that (i) each $L'$ is generic to $V$, (ii) $\diam(\cell(\cL', V)) \leq \gee$, and (iii) $[L'\inter (t\cdot \cC^N)]_{N-1} \leq \gt t$. 
    Since $\gd> 0$ is arbitrary, we can take it sufficiently small and conclude that $|\cL'| \leq 2 t N^{3/2}\gee^{-1}$.
    Since (iii) implies $[L'\inter V]_{N-1} \leq \gt t$ for $L'\in\cL'$, $\cL'$ is a valid candidate for the desired $\gee$ regular system.
\end{proof}

Lastly we introduce one more piece of language needed in the main proof.
An illustration of \cref{def:types} can be found in \cref{fig:idea}.

\begin{definition}\label{def:types}
    Let $V\sset \R^N$.
    Let $\cL$ be a collection of affine hyperplanes.
    Let $W$ be a path connected component of $V\setminus\Union_{L\in\cL}L$.
    We call $W$ type-I if $\cl{W}\inter V \inter L = \eset$ for all $L \in \cL$, that is, $\cl{W}\inter V$ is contained in the interior of some $E\in\cell(\R^N, \cL)$.
    Otherwise we call $W$ a type-II component.
\end{definition}

Below use the set illustrated \cref{fig:idea} (a) as a simple example in $\R^2$ to explain the main idea behind the proof.

\begin{example}
    Consider a compact $(K, 1)$ regular set $V$ in $\R^2$ illustrated by the colored curves in \cref{fig:idea} (a) (for this curve, $K = 4$ is sufficient).
    The black grid denotes an $\eps/\sqrt{2}$ regular system $\cL$ so the diameter of the cells are bounded by $\eps$.
    The green circle is a type-I component, and the blue curve is the union of type-II components.

    To cover both curves, we put a single ball for each type-I component to take care of all the green components.
    Then we put a ball at each intersection of the blue curve and the black grid (e.g., the red dots). This will cover the blue curve of type-II components.

    Since there are at most $K$ green components (because there are at most $K$ connected components in $V$ overall) and since each black line has at most $K$ intersections with $V$, the covering number $\cN(V, \eps)$ is bounded as $\cN(V, \eps) \leq K + |\cL| \cdot K$.
    \cref{lem:reg_sys} gives a bound on the number of planes $|\cL|$, and therefore we have a bound on the total number of balls.
\end{example}

\begin{figure}[!h]
    \centering
    \begin{minipage}{0.3\textwidth}
        \hspace*{-18ex}(a)\\
        \centering
        \includegraphics[width = \linewidth]{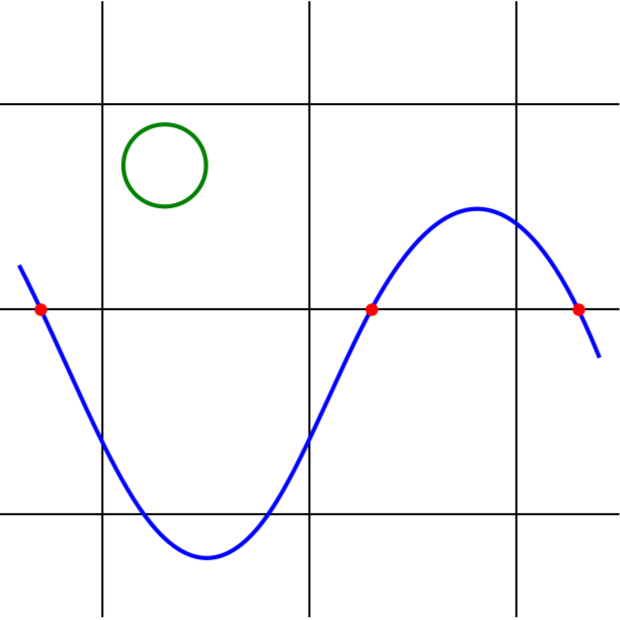}
    \end{minipage}
    \hspace*{0.1\textwidth}
    \begin{minipage}{0.3\textwidth}
        \hspace*{-18ex}(b)\\
        \centering
        \includegraphics[width = \linewidth]{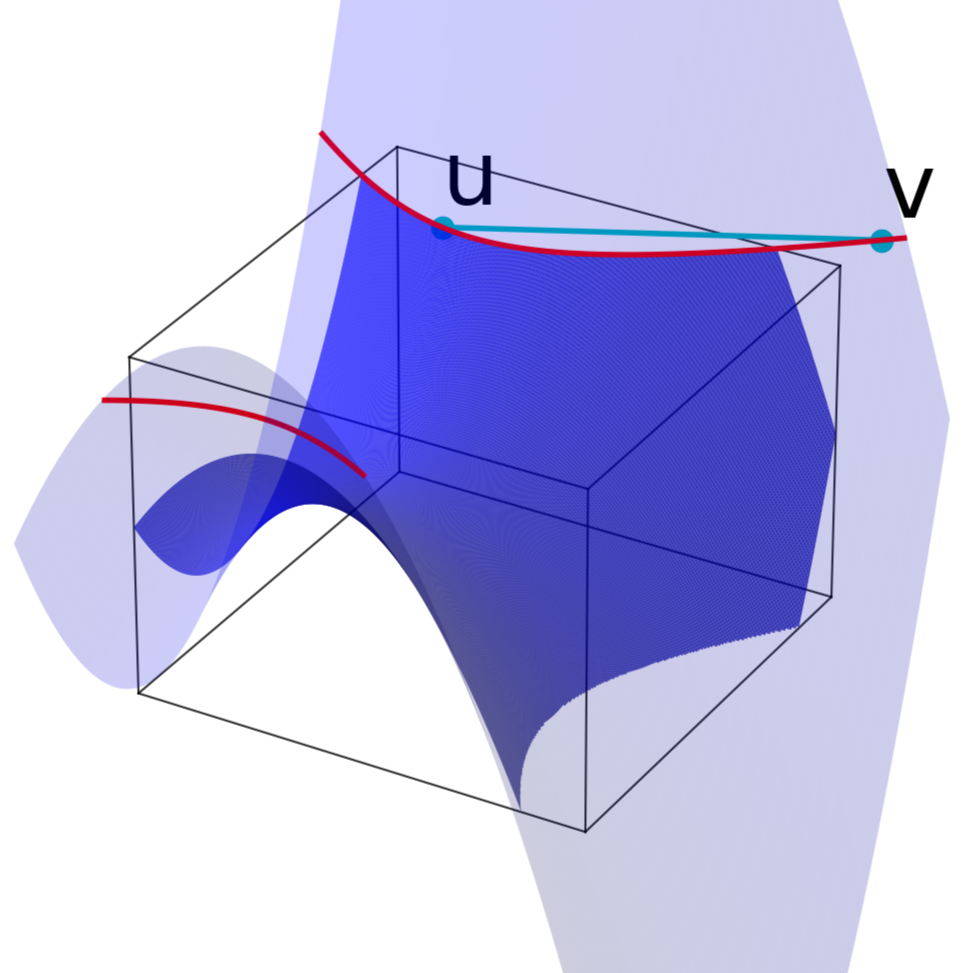}
    \end{minipage}
    \caption{
        (a): An example where $\dim(V) = 1$ in $\R^2$. The black lines represent planes in $\cL$. There are a circle-shaped type-I component colored in green, and a wave-shaped blue curve containing a few type-II components. In (b) we have $\dim(V) = 2$ in $\R^3$ (the blue surface).  The black box denotes a cell of the regular system. We can identify the intersection of the roof of the black cell (a generic plane) and $V$, which is the red curve, as an instance of the curve in (a).}
    \label{fig:idea}
\end{figure}

Going into higher dimensions, each red dot in \cref{fig:idea} (a) is no longer a dot that is covered by a single ball, but rather a lower dimensional regular set (see \cref{fig:idea} (b)).
This allows us to do the induction as the dimension of the intersection grow and get the final result.
The full proof is given below.

\begin{proof}[Proof of \cref{thm:general_cover_number}]
    The bound is obviously valid if $N = 1$ or $n = 0$. Consider $N \geq 2$ and $n \geq 1$.
    Take some $\gd \in (0, 2t]$, which will be specified later.
    Let $L_k$ be an affine plane of dimension $N - n + k$, $0 \leq k \leq n$.
    Denote $V_k = V \inter L_k$.
    We prove by induction that for all $0\leq k \leq n$,
    if $L_k$ is generic and $[V_k]_{N-m} \leq \gt t$ for some $\gt > 1$, then $V_k$ is covered by at most
    \begin{equation} \label{eq:Mk}
        M_k = (2t/\gd)^k \gt^{c_k} K \prod_{j = 1}^k (N - n + j+1)
    \end{equation}
    balls centered in $V_k$ of radius
    \begin{equation} \label{eq:rk}
        r_k = k\sqrt{N}\gd,
    \end{equation}
    where $c_0 = 0$ and $c_k > 0$. The exact values of $c_k$ will not be important.

    The base case $k = 0$ is automatic, since by \cref{lem:meas_regset}, $V_0$ is $(K, 0)$ regular, which is a set of at most $K$ points by \cref{lem:k0regset}.
    Thus, they are covered by $M_0 = K$ balls of radius $r_0 = 0$.

    Next consider $L_k$ being a generic plane having codimension $N - n + k$, $k \geq 1$, and $[L_k \inter V]_{N-n+k} \leq\gt t$.
    Again, $V_k$ is $(K, k)$ regular in $L_k$.
    Let $\cL$ be a $\sqrt{N}\gd$ regular system on $V_k$.
    By \cref{lem:reg_sys}, we can take $\cL$ so that all planes $L\in\cL$ are generic to $V_k$, and
    \begin{equation*}
        |\cL| \leq 2 \gt t \cdot \frac{(N-n+k)^{3/2}}{\sqrt{N}\gd} \,\leq (2t/\delta)\tau \cdot (N-n+k),
    \end{equation*}
    and in addition each plane $L\in \cL$ satisfies $[L \inter V_k]_{N-n+(k-1)} \leq \gt^2 t$.
    Therefore, by the induction hypothesis, for all $L \in \cL$, $L\inter V = L\inter V_k$ is covered by at most
    \begin{equation*}
        M_{k-1} = (2t/\gd)^{k-1} \gt^{2c_{k-1}} K \prod_{j = 1}^{k-1} (N - n + j+1)
    \end{equation*}
    balls of radius $r_{k-1} = (k-1)\sqrt{N}\gd$.

    Let us go back to $V_k = L_k \inter V$.
    By regularity conditions, there are at most $K$ type-I components on $V_k$.
    For each type-I component $W$, we have $\diam(W) \leq \diam(\cell(V, \cL)) \leq \sqrt{N}\gd$.
    Hence, all type-I components are covered by $K$ balls of radius $\sqrt{N} \gd$ with centers in $V_k$.

    Next, take an arbitrary type-II component $W$.
    Take arbitrary $\v u \in \Union_{L \in \cL} \cl{W}\inter V_k \inter L$. 
    Since $W$ has a diameter is at most $\sqrt{N} \gd$, $W \sset B(\v u, \sqrt{N}\gd)$.
    Now we identify $L \in \cL$ that contains $\v u$. 
    By the induction hypothesis,
    there are centers $\set{\v v_i}$, $i\in [M_{k-1}]$ on $L \inter V_k$ so that $\Union_i B(\v v_i, r_{k-1})$ covers $L\inter V_k$.
    In particular, there is some $\v v \in \set{\v v_i}_i$ so that $d(\v u, \v v) \leq r_{k-1}$.
    Thus, we conclude that $W$ is covered by a ball centered at $\v v$ of radius $r_{k} = r_{k-1} + \sqrt{N}\gd$.
    For each $L \in\cL$ and for each center $\v v_i$ in $L$, we put a ball of radius $r_k$ centered at $\v v_i$. 
    These balls then cover all type-II components. 
    Consequently, all type-I and type-II components are covered by at most
    \begin{align*}
        K + |\cL|M_{k-1}
        &\,\leq\,
        K + (2 t/\gd)\gt\cdot (N-n+k) \cdot (2t/\gd)^{k-1} \gt^{2c_{k-1}} K \prod_{j = 1}^{k-1} (N - n + j+1)\\
        &\leq 
        (2 t/\gd) \gt\cdot (N-n+k + 1) \cdot (2t/\gd)^{k-1}  \gt^{2c_{k-1}} K \prod_{j = 1}^{k-1} (N - n + j+1)\\
        &=
        M_k
    \end{align*}
    balls of radius $r_k$.
    For points in $V_k$ that is not in any type-I or type-II component, they are in some $L \in \cL$, and thus are also covered by these balls by the induction hypothesis.
    This completed our induction step.

    Finally we apply the result on $k = n \geq 1$.
    The covering number of $V$ by balls of radius $\gee = n\sqrt{N} \gd$ is at most
    \begin{equation}\label{eq:bound_in_proof}
        M_n = (2t/\gd)^n \gt^{c_n} K \prod_{j = 1}^n (N - n + j+1)
        \leq \left(2t n \frac{N^{3/2}}{\gee}\right)^n \cdot 2K.
    \end{equation}
    In the last inequality, 
    note that for all $1\leq n\leq N$, $N \geq 2$, we have $\prod_{j = 1}^n (N - n + j+1) \leq (3/2) N^n$.
    we choose $\gt$ sufficiently close to $1$ so that $\gt^{c_n} \prod_{j = 1}^n (N - n + j+1) \leq 2N^n$.
    For $\gd > 2t$, $\gee > 2t\sqrt{N} \geq \diam(V)$.
    In this trivial case, $V$ is covered by a single ball of radius $\gee$. 
    The proof is thus complete by taking the logarithm of the right-hand side of \eqref{eq:bound_in_proof}.
\end{proof}

\subsection{Examples of Regular Sets}\label{sec:regsets}

As our next concern, we show in this section that some important sets are indeed regular sets.
Namely, this include images of polynomial and rational maps, as well as algebraic varieties and semialgebraic sets.
We remark that the bound on $K$ for semialgebraic sets therein drastically improves the result in \cite[Corollary 4.9]{yomdin2004tame}.
The main handle to bound $K$ for polynomially defined sets is the following theorem of Petrovskii-Oleinik-Milnor-Thom (POMT).

\begin{proposition}[POMT] \label{prop:milnor-thom}
Let $\set{f_1,\ldots,f_k}$ be a collection of polynomials on $\R^N$ of degree at most $d$. 
Denote $Z(f_i)$ the zero set of $f_i$.
Then the number of path connected components\footnote{Path connectedness is equivalent to connectedness on semialgebraic sets, see e.g. \cite[Proposition 2.5.13,  Theorem 2.4.5]{bochnak2013real}.} of $V = \inter_i Z(f_i)$ is upper bounded by $d(2d-1)^{N-1}$. 
\end{proposition}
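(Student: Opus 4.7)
The plan is to follow the classical Morse-theoretic argument of Milnor. The first step is to reduce the case of an intersection of $k$ zero sets to a single polynomial equation by the sum-of-squares trick: setting $F(x) = \sum_{i=1}^k f_i(x)^2$, one has $V = Z(F)$ as sets, and $\deg F \leq 2d$. Thus it suffices to bound the number of connected components of $Z(F)$ for a single polynomial $F$ of degree $D \leq 2d$ by $\tfrac{1}{2}D(D-1)^{N-1}$, which for $D = 2d$ yields exactly the desired $d(2d-1)^{N-1}$.

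Next I would approximate $Z(F)$ by a smooth compact hypersurface. For a sufficiently small regular value $\eps > 0$ of $F$ (existing by Sard's theorem) and sufficiently large $R > 0$, the level set $W := \{F = \eps\} \cap (R\cdot\cB^N)$ is a smooth compact hypersurface in $\R^N$. A tubular-neighborhood argument then shows that the number of connected components of $Z(F)$ is at most the number of connected components of $W$: for $\eps$ small, each component of $Z(F)$ is trapped in a connected subregion of the sublevel set $\{F \leq \eps\}$ whose boundary in $W$ furnishes at least one distinct component of $W$.

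Then I would apply Morse theory to $W$. For a generic linear functional $\ell(x) = \langle v, x\rangle$, the restriction $\ell|_W$ is Morse with isolated critical points, which arise as the common solutions of the system $F(x) - \eps = 0$ (of degree $D$) together with the $N-1$ Lagrange parallelism conditions $v_1\,\partial_j F(x) - v_j\,\partial_1 F(x) = 0$ for $j = 2,\ldots,N$ (each of degree $D-1$). Bezout's theorem, applied over $\bC$, bounds the number of isolated solutions by $D(D-1)^{N-1}$. Since $W$ is compact, on each of its connected components the function $\ell|_W$ attains both a maximum and a minimum, contributing at least two critical points per component. Hence the number of components of $W$ is at most $\tfrac{1}{2}D(D-1)^{N-1}$, which closes the chain of inequalities.

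The main obstacle is the semicontinuity step: justifying that the perturbation $Z(F) \rightsquigarrow W$ does not decrease the count of connected components. This requires choosing $\eps$ small enough that distinct components of $Z(F)$ lie in distinct path components of $\{F \leq \eps\}$, so that their boundary hypersurfaces in $W$ remain distinct; a careful argument via the compactness of $Z(F) \cap (R\cdot\cB^N)$ and continuity of $F$ suffices. The remaining genericity requirements—that $\ell|_W$ be Morse and that the critical-point system be zero-dimensional so Bezout applies—are routine consequences of Sard's theorem applied to the Gauss map on $W$.
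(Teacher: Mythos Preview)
The paper does not prove this proposition; it is stated as a classical result attributed to Petrovski\u{\i}--Ole\u{\i}nik, Milnor, and Thom and used as a black box throughout. Your sketch faithfully reproduces Milnor's original 1964 argument: the sum-of-squares reduction to a single hypersurface of degree $2d$, perturbation to a smooth level set, a Bezout count of critical points of a generic linear functional, and the halving from the max/min pair on each compact component. So there is nothing to compare against in the paper itself, and your approach is the standard one.

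One technical point deserves more care than you give it. The zero set $Z(F)$ may be unbounded, so intersecting $\{F=\eps\}$ with a large ball $R\cdot\cB^N$ can yield a manifold \emph{with boundary}. This affects both steps you single out: a component of $W$ that meets $\partial(R\cdot\cB^N)$ need not contain two interior critical points of $\ell|_W$, and the semicontinuity argument linking components of $Z(F)$ to components of $\{F\le\eps\}$ must also contend with components that run off to infinity. Milnor handles this by first passing to $V\cap B_R$ (which has at least as many components as $V$), choosing $R$ so that the sphere is transverse to the relevant level sets, and then accounting for boundary behavior explicitly. Your write-up flags the semicontinuity step as the main obstacle but does not mention the boundary issue, which is a separate and equally genuine gap in the sketch as written.
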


We show how to use POMT to control $K$ when $V$ is a polynomial image in \cref{lem:polynom_regularity}.
Then we list more examples of regular sets with explicit bounds on $(K, n)$ in \cref{lem:regvariety} to \cref{lem:regsemi}.
The proofs to these results are similar to the polynomial case, and thus are given in \cref{app:regset}.

\begin{lemma}[polynomial images are regular]\label{lem:polynom_regularity}
    Let $p:\R^n \rightarrow \R^N$ be a polynomial map whose coordinate functions are of degree at most $d$. Then
    \begin{enuma}\setlength\itemsep{0.5em} 
        \item $\im(p)$ is $((2d)^n, n)$ regular.
        \item $\im(p) \inter (c \cdot \cB^N)$ is $((4d)^{n+1}, n)$ regular for all $c > 0$.
        \item $\pi(\im(p))$ is $((4d + 1)^{n+1}, n)$ regular. If $\im(p)$ is a cone, then the second regularity parameters may be changed from $n$ to $n-1$.
    \end{enuma}
\end{lemma}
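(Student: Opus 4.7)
The plan is to apply the Petrovskii--Oleinik--Milnor--Thom theorem (\cref{prop:milnor-thom}) after pulling affine slices back through the polynomial map $p$, converting geometric bounds on $\im(p)\cap L$ into combinatorial bounds on zero sets of polynomials in $\R^n$.

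For part (a), I would fix a plane $L = \{x \in \R^N : \langle a_i, x\rangle = b_i,\ i=1,\ldots,m\}$ of codimension $m$ with linearly independent $a_i$, and define the pullback polynomials $q_i(t) := \langle a_i, p(t)\rangle - b_i$ on $\R^n$, each of degree at most $d$. Since
\[
\im(p) \cap L \;=\; p(p^{-1}(L)) \;=\; p\Bigl(\bigcap_{i=1}^{m} Z(q_i)\Bigr),
\]
and continuous maps do not create path components, POMT applied to $m \leq n$ polynomials of degree at most $d$ in $n$ variables yields at most $d(2d-1)^{n-1} \leq (2d)^n$ path components. For $m \geq n+1$, the polynomial map $t \mapsto (\langle a_i, p(t)\rangle)_{i=1}^{m}$ has semialgebraic image of dimension at most $n < m$, hence $m$-dimensional Lebesgue measure zero; since the invariant measure $\mu$ on the affine Grassmannian decomposes as an invariant measure on tangent directions times Lebesgue measure on the orthogonal offset, Fubini gives $p^{-1}(L) = \emptyset$ for $\mu$-almost every such $L$.

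For part (b), I would adjoin an auxiliary variable $s \in \R$ and enlarge the polynomial system by $c^2 - \|p(t)\|^2 - s^2$, of degree at most $2d$. The resulting variety in $\R^{n+1}$ is cut out by $m + 1$ polynomials of degree at most $2d$, so POMT bounds its path components by $2d(4d-1)^n \leq (4d)^{n+1}$. Projecting to the $t$-coordinates surjects onto $\{t : q_i(t)=0,\ \|p(t)\|^2 \leq c^2\}$ and applying $p$ then yields $\im(p)\cap(c\cB^N)\cap L$; neither continuous map creates new path components. The codimension $m \geq n+1$ case is inherited from (a) since $\im(p)\cap(c\cB^N)\cap L \subseteq \im(p)\cap L$.

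For part (c), the condition $p(t)/\|p(t)\| \in L$ rewrites as $\langle a_i, p(t)\rangle = b_i\|p(t)\|$ with $\|p(t)\| > 0$. I would linearize the norm by adjoining $u$ and considering the semialgebraic subset of $\R^{n+1}$ cut out by the $m$ equations $\langle a_i, p(t)\rangle - b_i u = 0$ (degree $\leq d$), the equation $u^2 - \|p(t)\|^2 = 0$ (degree $2d$), and the inequality $u > 0$; encoding the latter by the Rabinowitsch-style polynomial $uv - 1$ in an extra variable $v$ produces a system of $m+2$ polynomials of degree at most $2d$ in $n+2$ variables, and POMT gives at most $(4d+1)^{n+1}$ path components (the constant is tightened via the $(u,v)\leftrightarrow(-u,-v)$ symmetry that separates the $u>0$ and $u<0$ branches). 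Pushing forward by the continuous map $(t,u,v) \mapsto p(t)/u$ covers $\pi(\im(p))\cap L$, transferring the bound. When $\im(p)$ is a cone, the positive-rescaling invariance lets $\pi\circ p$ be reparametrized over an $(n-1)$-dimensional base (quotienting $\R^n$ by a one-parameter scaling action compatible with $p$), and the same argument runs with $n$ replaced by $n-1$.

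The main obstacle I anticipate is in part (c): cleanly converting the open constraint $u > 0$ into polynomial data while keeping the degree--variable count tight enough to reach exactly $(4d+1)^{n+1}$, and rigorously justifying the cone-case dimension drop by exhibiting the $(n-1)$-parameter reparametrization of $\pi\circ p$. A recurring technicality in all three parts, dispatched by standard transversality arguments together with the Fubini decomposition of $\mu$, is to verify that the planes excluded at each step form a $\mu$-null subset of the affine Grassmannian.
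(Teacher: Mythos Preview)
Parts (a) and (b) are essentially the paper's argument: pull back the affine slice through $p$, add a slack variable $s$ with $\|p(t)\|^2 + s^2 = c^2$ for the ball constraint, and invoke POMT on the resulting system in $\R^{n}$ or $\R^{n+1}$.

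Part (c) has a genuine gap. Your system in $(t,u,v)$ is
\[
\langle a_i, p(t)\rangle - b_i u = 0,\qquad u^2 - \|p(t)\|^2 = 0,\qquad uv - 1 = 0.
\]
Under $(u,v)\mapsto(-u,-v)$ the first family becomes $\langle a_i, p(t)\rangle + b_i u = 0$, which is a \emph{different} equation unless every $b_i = 0$. So the involution does not act on your variety, and you cannot halve the component count by symmetry; the $u>0$ and $u<0$ branches correspond to intersecting $\pi(\im(p))$ with two distinct planes (offsets $b$ and $-b$). The honest POMT bound from your setup is $2d(4d-1)^{n+1}$ (degree $\leq 2d$ in $n+2$ variables), and this is \emph{not} always $\leq (4d+1)^{n+1}$: already at $d=2,\ n=1$ you get $4\cdot 7^2 = 196 > 81 = 9^2$. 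You correctly flagged this as the fragile step. The paper's fix is to stay in $n+1$ variables: it squares the slice equations to $\langle p(t), e_i\rangle^2 = \langle u, e_i\rangle^2 \|p(t)\|^2$ and encodes $p(t)\neq 0$ via a single auxiliary equation $\|p(t)\|^2 w = 1$. This raises the top degree to $2d+1$ but saves a variable, so POMT gives $(2d+1)(4d+1)^n \leq (4d+1)^{n+1}$.

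For the cone case, your proposed $(n-1)$-dimensional reparametrization of $\pi\circ p$ would need to be polynomial of comparable degree to rerun the argument, and you do not produce one; the obvious candidates involve normalizing by $\|t\|$ or $\|p(t)\|$ and are not polynomial. The paper asks for less: the statement only reduces the \emph{second} regularity parameter from $n$ to $n-1$, leaving $K$ unchanged. For that it suffices to observe that if $\im(p)$ is a cone then $\pi(\im(p))$ sits inside a variety of dimension at most $n-1$, so a Zariski-generic affine plane of codimension $\geq n$ already misses it. No reparametrization is needed.
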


\begin{proof}
    Let $L = (\u, P)$ be an affine subspace of $\mathbb{R}^N$ of codimension $m$ (see the beginning of \cref{sec:main_result} for this parametrization).
    Let $\v e_1,\ldots,\v e_m$ be an orthonormal basis of $P^\perp$. 
    For a subset $S$ of Euclidean space, let $CC(S)$ denote the number of path connected components of $S$ with respect to the Euclidean topology.
    Note that item 4 and item 5 are immediate from item 8.
    We prove the rest of the assertions.

    For item 1, consider $S = \set{\x: p(\x) \in L}$ in $\R^n$. 
    It is the solution set of a system of degree $d$ polynomial equations:
    \begin{equation*}
        \ip{p(\x)}{\v e_i} = \ip{\v u}{\v e_i},\ \ \ i = 1,\ldots,m.
    \end{equation*}
    Thus by POMT (\cref{prop:milnor-thom}),
    $CC(S) \leq d(2d-1)^n \leq (2d)^n$.  
    The same is true for $\im(p) \cap L$, because $\im(p) \cap L = p(S)$ and $p$ is continuous.
    When $m > n$, the polynomial system is overdetermined and has no solution for Zariski generic $\u$ and $P$. Indeed, see \cite[Theorem~1.1]{dalbec1995introduction} or \cite[Section~3.2.B]{gelfand1994discriminants}, 
    and note $\im(p)$ is contained in a variety of dimension at most $n$ (namely its Zariski closure).
    We conclude that $\im(p)$ is $((2d)^n, n)$ regular.

    For (2), 
    let $w$ be a dummy variable and consider $S = \set{(\x, w) : \|p(\x)\|^2 + w^2 = c^2, \, p(\x) \in L}$ in $\R^n \times \R$.  It is given by the solution set of 
    \begin{gather*}
        \ip{p(\x)}{\v e_i} = \ip{\v u}{\v e_i},\ \ \ i = 1,\ldots,m;\\
        \norm{p(\x)}^2 + w^2 = c^2.
    \end{gather*} 
    The polynomial equations have degree bounded by $2d$.   So POMT implies $CC(S) \leq (4d)^{n+1}$.
    When $m > n$, $S = \eset$ Zarski-generically by \cite[Theorem~1.1]{dalbec1995introduction}.
    Since the map $(\x, w) \mapsto p(\x)$ is continuous and sends $S$ to $\im(p) \cap  (c\cdot \cB^M) \cap L$, we conclude that $\im(p) \cap  (c \cdot \cB^M)$ is $((4d)^{n+1}, n)$ regular.

    For (3),
    consider $S = \set{(\x, w):  \| p(\x) \|^2 w  =1, \, \pi(p(\x)) \in L}$ in $\R^n \times \R$. 
    Then $S$ is the solution set to
    \begin{gather*}
        \ip{p(\x)}{\v e_i}^2 = \ip{\v u}{\v e_i}^2 \norm{p(\v x)}^2,\ \ \ i = 1,\ldots,m;\\
        \norm{p(\x)}^2 w = 1.
    \end{gather*}
    The bottom equation excludes the zero set of $p$, so that $\pi(p(\x))$ is well-defined. 
    Note all the equations have degree at most $2d + 1$.  
    By POMT, $CC(S) \leq (2d+1)(4d + 1)^{n} \leq (4d+1)^{n+1}$.
    Again, we have $S = \eset$ when $m > n$ and $L$ is Zariski generic.
    Since the map $(\x, w)\mapsto \pi(p(\x))$ is continuous on $S$ with image $\pi(\im(p)) \cap L$, we conclude that $\pi(\im(p))$ is $((4d+1)^{n+1}, n)$ regular.
    Finally, if $\im(p)$ is a cone then $\pi(\im(p))$ is contained in a variety of dimension $1$ less.  
    Hence, we can replace $n$ by $n-1$ in the second regularity parameter.
\end{proof}

\begin{lemma}[restate = regvariety, name = varieties are regular]\label{lem:regvariety}
    Let $V \sset \R^N$ be a variety defined by $\{\forall j, f_j = 0 \}$, where $f_j$ are polynomials of degree at most $d$, $d \geq 2$.  Suppose $V$ has dimension at most $n$. 
    Then
    \begin{enuma}\setlength\itemsep{0.5em}
        \item $V$ is $((2d)^N, n)$ regular.
        \item $V \inter (c \cdot \cB^N)$ is $((2d)^{N + 1}, n)$ regular for all $c > 0$.
        \item $\pi(V)$ is $((2d+1)^{N+1}, n)$ regular. If $V$ is a cone, then the second regularity parameters may be changed from $n$ to $n-1$.
    \end{enuma}
\end{lemma}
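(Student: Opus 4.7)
The plan is to follow the template of \cref{lem:polynom_regularity} closely, now with $V$ directly presented as the vanishing set of polynomials.  For each of items (a)--(c) I will exhibit a polynomial system in some Euclidean space whose real solution set surjects continuously onto the slice of interest, and then bound its number of path-connected components using the Petrovskii--Oleinik--Milnor--Thom theorem (\cref{prop:milnor-thom}).  Throughout, let $L = (\u, P)$ be an affine plane of codimension $m$, let $\v e_1,\ldots,\v e_m$ be an orthonormal basis of $P^\perp$, and let $CC(\cdot)$ denote the number of path-connected components.

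For item (a), $V \cap L$ is cut out in $\R^N$ by the equations $f_j(\x)=0$ together with the linear equations $\ip{\x}{\v e_i}=\ip{\u}{\v e_i}$ for $i=1,\ldots,m$.  All defining polynomials have degree at most $d$, so \cref{prop:milnor-thom} gives $CC(V \cap L) \le d(2d-1)^{N-1} \le (2d)^N$.  For $m > n$, the assumption $\dim V \le n$ combined with the standard dimension-count argument used in the proof of \cref{lem:polynom_regularity} (see \cite[Theorem~1.1]{dalbec1995introduction}) implies that a $\mu$-generic affine plane of codimension exceeding $n$ misses $V$ entirely.  For item (b), I introduce an auxiliary variable $w$ and define
\[
    S = \set{(\x, w) \in \R^{N+1} : f_j(\x) = 0 \text{ for all } j,\ \norm{\x}^2 + w^2 = c^2,\ \x \in L}.
\]
The projection $(\x,w)\mapsto \x$ sends $S$ surjectively onto $V \cap (c\cdot \cB^N) \cap L$, since each $\x$ in the target admits the preimages $(\x,\pm\sqrt{c^2-\norm{\x}^2})$.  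Because $d \ge 2$, the sphere equation has degree $2\le d$, so the system still has maximum degree $d$, and \cref{prop:milnor-thom} in $\R^{N+1}$ yields the bound $(2d)^{N+1}$.  Emptiness for $m > n$ is inherited from (a).

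For item (c), I lift to $\R^{N+1}$ via
\[
    S = \set{(\x, w) : f_j(\x)=0\text{ for all }j,\ \norm{\x}^2 w = 1,\ \ip{\x}{\v e_i}^2 = \ip{\u}{\v e_i}^2 \norm{\x}^2 \text{ for all } i},
\]
where $\norm{\x}^2 w = 1$ rules out $\x = 0$ (so $\pi(\x)$ is well-defined) and the squared linear conditions force $\pi(\x) \in L$.  The continuous map $(\x,w)\mapsto \pi(\x)$ sends $S$ onto $\pi(V) \cap L$: for any $\x_0$ in the target, take $w = 1/\norm{\x_0}^2$.  The defining polynomials have degrees bounded by $\max(d,3) \le 2d+1$, so \cref{prop:milnor-thom} in $\R^{N+1}$ produces a bound of the claimed order $(2d+1)^{N+1}$.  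Since $\dim \pi(V) \le \dim V \le n$, generic planes of codimension $m > n$ miss $\pi(V)$.  If $V$ is a cone, we may assume its defining $f_j$ are homogeneous, so $\pi(V) = V \cap \mathbb{S}^{N-1}$, of dimension at most $n-1$; I then work directly in $\R^N$ with the system $\{f_j(\x)=0,\ \norm{\x}^2=1,\ \text{linear equations for }L\}$, obtaining the same order POMT bound while letting the second regularity parameter drop from $n$ to $n-1$.

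I expect the main nuisance, rather than obstacle, to lie in item (c): squaring the plane-containment conditions enlarges the zero locus by admitting sign flips on $\ip{\x}{\v e_i}$, so one must be careful that $CC(\pi(V)\cap L) \le CC(S)$ still holds.  This is fine because the explicit lift above shows $\pi(V) \cap L$ is the continuous image of $S$, so the inequality is automatic even if $S$ has extraneous components.  The other point requiring care, the genericity claim for empty intersection at codimension $m > n$, is routine from the hypothesis $\dim V \le n$ and the Zariski-generic transversality statement already used in \cref{lem:polynom_regularity}.
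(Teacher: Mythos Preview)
Your approach is the same as the paper's; the paper actually defers (a) and (b) to \cref{lem:regsemi} (taking $b=0$ and $b=1$ inequality constraints respectively), but your direct POMT arguments are equivalent and correct, and for (c) you reproduce the paper's construction verbatim.

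There is one slip in your treatment of (c). First, the lift you write down, $(\x_0,1/\norm{\x_0}^2)$, need not lie in $S$: a point $\x_0\in\pi(V)$ does not satisfy $f_j(\x_0)=0$ in general. The correct preimage is $(\y,1/\norm{\y}^2)$ for any $\y\in V\setminus\{0\}$ with $\pi(\y)=\x_0$. More substantively, your resolution of the ``nuisance'' is not valid as stated. Because of the squared containment conditions, the image of $(\x,w)\mapsto\pi(\x)$ on $S$ is $\pi(V)\cap\bigcup_{\epsilon\in\{\pm1\}^m}L_\epsilon$, where $L_\epsilon=\{z:\ip{z}{\v e_i}=\epsilon_i\ip{\u}{\v e_i}\}$; this strictly contains $\pi(V)\cap L$ in general, so $\pi(V)\cap L$ is \emph{not} the continuous image of $S$, and the bound $CC(\pi(V)\cap L)\le CC(S)$ does not follow merely from $\pi(V)\cap L$ being a subset of that image (a subset can have more components). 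The fix is that for $\mu$-generic $\u$ one has $\ip{\u}{\v e_i}\neq 0$ for all $i$, so the $L_\epsilon$ are pairwise disjoint; then $\pi(V)\cap L$ is a union of whole connected components of the image, giving $CC(\pi(V)\cap L)\le CC(\text{image})\le CC(S)$ as desired.
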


\begin{lemma}[restate = regrat, name = images of rational maps are regular]\label{lem:regrat}
    Let $f$ be a rational map from an open dense subset of $\R^n$ to $\R^N$ whose coordinate functions are ratios of polynomials of degree at most $d$ (see \cref{def:rational}). Then $\im(f)$ is $((2Nd + 1)^{n+1}, n)$ regular. 
\end{lemma}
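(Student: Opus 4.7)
The plan is to adapt the strategy used for \cref{lem:polynom_regularity} by clearing denominators and introducing an auxiliary variable to exclude the common vanishing locus of the denominators. Write each coordinate function $f_i = g_i/h_i$ with $g_i, h_i$ polynomials of degree at most $d$, so the domain of $f$ contains the open dense set $\{h_1 \cdots h_N \neq 0\} \sset \R^n$.

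Let $L = (\u, P)$ be an affine subspace of codimension $m$ parametrized as in the proof of \cref{lem:polynom_regularity}, with $\v e_1, \ldots, \v e_m$ an orthonormal basis of $P^\perp$. First I would define
\begin{equation*}
    S = \Bigl\{(\x, w) \in \R^n \times \R : \, \ip{f(\x)}{\v e_j} = \ip{\u}{\v e_j} \text{ for } j = 1, \ldots, m, \text{ and } w \prod_{k=1}^N h_k(\x) = 1 \Bigr\}.
\end{equation*}
Multiplying the $j$-th equation through by $\prod_{k=1}^N h_k(\x)$ converts each $\ip{f(\x)}{\v e_j} = \ip{\u}{\v e_j}$ into the polynomial identity
\begin{equation*}
    \sum_{i=1}^N \ip{\v e_j}{\v e_i^{\text{std}}} g_i(\x) \prod_{k \neq i} h_k(\x) \,=\, \ip{\u}{\v e_j} \prod_{k=1}^N h_k(\x),
\end{equation*}
which has degree at most $Nd$. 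The auxiliary equation $w\prod_k h_k(\x) = 1$ has degree at most $Nd + 1$. Thus $S \sset \R^{n+1}$ is the zero set of a system of polynomials each of degree at most $Nd + 1$.

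Next I would apply \cref{prop:milnor-thom} (POMT) with ambient dimension $n+1$ and degree bound $Nd + 1$ to obtain
\begin{equation*}
    CC(S) \,\leq\, (Nd + 1)\bigl(2(Nd+1) - 1\bigr)^n \,=\, (Nd + 1)(2Nd + 1)^n \,\leq\, (2Nd + 1)^{n+1}.
\end{equation*}
The continuous map $(\x, w) \mapsto f(\x)$ on $S$ has image $\im(f) \cap L$ (the $w$-coordinate precisely certifies that no denominator vanishes, so $f(\x)$ is well-defined), and therefore $CC(\im(f) \cap L) \leq (2Nd + 1)^{n+1}$.

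For the emptiness condition when $m \geq n + 1$, I would note that the Zariski closure of $\im(f)$ has dimension at most $n$, being the image of a rational map from an open subset of $\R^n$. By the same Bezout-type/dimension argument invoked in the proof of \cref{lem:polynom_regularity} (via \cite[Theorem~1.1]{dalbec1995introduction}), a Zariski generic affine plane of codimension greater than $n$ misses $\overline{\im(f)}$, and hence $\im(f)$, establishing the second regularity condition. The only technical subtlety is the degree bookkeeping after clearing denominators, but this is routine since $N$ denominators each of degree at most $d$ produce a product of degree at most $Nd$, giving the bound $2Nd + 1$ in the final expression.
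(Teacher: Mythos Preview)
Your proposal is correct and follows essentially the same approach as the paper: clear denominators, introduce the auxiliary variable $w$ to enforce $w\prod_k h_k(\x)=1$, apply POMT in $\R^{n+1}$ with degree bound $Nd+1$, and push connected components forward under $(\x,w)\mapsto f(\x)$; the emptiness for codimension $>n$ is handled identically via the Zariski closure dimension argument.
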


\begin{lemma}[restate = regsemi, name = semialgebraic sets are regular]\label{lem:regsemi}
    Let $U \sset \R^N$ be a semialgebraic set defined by $\set{\forall k,~p_k = 0} \inter \set{\forall \ell,~q_\ell \leq 0} \inter \set{\forall t,~r_t < 0}$\footnote{In general, a semialgebraic set is a finite union of such $U$ defined here. It is trivial to show that the union of a $(K_1, n_1)$ regular set and a $(K_2, n_2)$ regular set is $(K_1 + K_2, \max\set{n_1, n_2})$ regular.}, where $p_k,~q_\ell,~r_t$ are polynomials of degree at most $d$. Suppose $U$ has dimension at most $n$ and there are $b$ inequality constraints. Then $U$ is $((2d)^{N}\min\set{(2d)^b, 7^b N, (\const b^2)^N}, n)$ regular\footnote{The term $7^b N$ can be improved with $N$ replaced with the dimension of the variety defined by polynomials $p_k$.}.
\end{lemma}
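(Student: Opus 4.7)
The plan is to mirror the proofs of \Cref{lem:polynom_regularity} and \Cref{lem:regvariety}: slice $U$ with a generic affine subspace $L = (\u,P)$ of codimension $m$ and apply \Cref{prop:milnor-thom} together with standard sign-condition counting. For the emptiness condition (item~2 of \Cref{def:reg_set}), I would note that $U$ is contained in its Zariski closure, which is a real algebraic variety of dimension at most $n$ (the intrinsic dimension of a semialgebraic set coincides with the dimension of its Zariski closure). Then \cite[Theorem~1.1]{dalbec1995introduction} gives that Zariski-generic, hence $\mu$-generic, $L$ of codimension $m > n$ misses this closure, so $U \cap L = \eset$.

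For the components condition with $m \leq n$, I would linearly parametrize $L \cong \R^{N-m}$ and substitute into the defining polynomials, exhibiting $U \cap L$ as a semialgebraic subset of $\R^{N-m}$ with the same number $b$ of inequality constraints and polynomials of degree $\leq d$. It then suffices to bound the number of connected components of such a set by each of $(2d)^{N+b}$, $(2d)^N \cdot 7^b N$, and $(2d)^N (\const b^2)^N$ separately and take the minimum.

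For the first bound, I would use the slack-variable trick: adjoin $y_\ell$ with $\tilde q_\ell + y_\ell^2 = 0$ for each non-strict inequality, and $z_t$ with $\tilde r_t z_t^2 + 1 = 0$ for each strict inequality. This produces a real variety $S \subseteq \R^{N-m+b}$ cut out by polynomials of degree at most $d + 2 \leq 2d$ (using $d \geq 2$, with the trivial linear cases handled directly), together with a continuous surjection $S \twoheadrightarrow U \cap L$ via coordinate projection. Applying \Cref{prop:milnor-thom} to $S$ and using that continuous surjections cannot increase the number of path components yields the $(2d)^{N+b}$ bound. For the remaining two bounds, I would appeal to Milnor--Thom--Warren-type theorems on the number of connected components of realizations of sign conditions on $b$ polynomials of degree $\leq d$ in $\R^{N-m}$: decomposing $U \cap L$ into at most $2^b$ pieces indexed by the vanishing vs.\ strictly-negative pattern of the $\tilde q_\ell$'s and applying a Betti-number bound of the form $\sum_j \binom{b}{j}(2d)^{N-m}$ yields the $7^b N$ factor after combinatorial simplification, while a Warren-style cell-count over the polynomial arrangement yields the $(\const b^2)^{N}$ factor.

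The main obstacle is reconciling the precise constants in the three bounds with those in standard references such as \cite{basu2022hausdorff} and Basu--Pollack--Roy; this is essentially bookkeeping, but the three bounds arise from distinct techniques (slack variables, sign-condition counting, and cell-arrangement counting) that must each be carried out separately. A minor technicality in the slack-variable step is the degree inflation $d \to d+2$ from strict inequalities, absorbed into the base of the exponent using $d \geq 2$; a small extra argument is also required to verify that $U \cap L$ remains the image of $S$ under projection for $\mu$-generic $L$, which follows from \Cref{lem:meas_regset} applied to each of the intermediate varieties.
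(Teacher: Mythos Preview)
Your plan follows the same skeleton as the paper's proof: slice by a generic $L$, bound the number of components of $U\cap L$ via three separate arguments, and handle $m>n$ by passing to the Zariski closure. The differences are in execution rather than strategy.

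First, the paper does not handle strict inequalities with your slack constraint $r_t z_t^2+1=0$. Instead it invokes \cite[Theorem~4.8]{yomdin2004tame} to pass to a subset with at least as many components in which every inequality is non-strict, and only then applies the slack trick $q_\ell+\omega_\ell^2=0$. This keeps the degree at $d$ rather than $d+2$; your absorption ``$d+2\le 2d$'' feeds $2d$ into POMT and returns $(4d)^{N+b}$, not the stated $(2d)^{N+b}$, so you would need the same reduction (or accept a worse constant). Second, for the $7^bN$ and $(\const b^2)^N$ factors the paper simply cites black-box results---\cite[Theorem~7.38]{basu2006algorithms} and \cite{gabrielov2009approximation} respectively---rather than deriving them from a $2^b$ sign-pattern decomposition or a Warren cell count; your sketch for the $7^bN$ bound in particular does not obviously produce that specific shape, and the Warren-type count naturally gives a $(\const b)^N$ rather than $(\const b^2)^N$ factor, so aligning constants really does require those specific references. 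Finally, your last concern is unnecessary: the projection $S\to U\cap L$ is surjective for \emph{every} $L$ by construction of the slack variables, so no appeal to \Cref{lem:meas_regset} is needed there.
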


The above lemmas showed that polynomially defined sets are indeed regular.
However, the class of regular sets is more general.  
For example, it can be shown that trigonometrically-parameterized sets are regular with explicit bounds on $n$ and $K$.  Another example is convex sets in affine subspaces and the relative boundaries of convex sets.

\subsection{Corollaries on Tubular Volumes}

The bound on the covering number gives a bound on the volume of tubular neighborhoods $\T(V, \gee)$ for regular sets $V$, as $\T(V, \gee)$ is covered by the union of radius $2\eps$ balls centered at the points in the $\eps$ covering net of $V$.

\begin{corollary} \label{cor:tubvol_regset}
    Let $V$ be a $(K, n)$ regular set in $\R^N$ with $[V]_N\leq t$.
    Then
    \begin{equation*}
        \vol(\T(V, \gee)) \leq \cN(V, \gee) \cdot (2\gee)^N \vol(\cB^N)
        \leq \vol(\cB^N) 2^N\gee^{N-n} K \cdot (\const t N^{3/2} n)^n.
    \end{equation*}
\end{corollary}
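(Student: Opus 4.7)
The plan is to derive the corollary as an immediate two-step consequence of \cref{thm:general_cover_number}, with the only geometric content being a standard ``inflate the net'' argument.

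First, I would establish the first inequality $\vol(\T(V, \gee)) \leq \cN(V, \gee) \cdot (2\gee)^N \vol(\cB^N)$. Let $\mathcal{N} \subseteq V$ be a minimum-cardinality $\gee$-net of $V$, with $|\mathcal{N}| = \cN(V, \gee)$. For any $\x \in V$ there exists $\y \in \mathcal{N}$ with $\norm{\x - \y} \leq \gee$, so every point $\z \in B(\x, \gee)$ satisfies $\norm{\z - \y} \leq \norm{\z - \x} + \norm{\x - \y} \leq 2\gee$ by the triangle inequality. Therefore
\begin{equation*}
    \T(V, \gee) = \bigcup_{\x \in V} B(\x, \gee) \subseteq \bigcup_{\y \in \mathcal{N}} B(\y, 2\gee),
\end{equation*}
and taking volumes and using subadditivity gives $\vol(\T(V, \gee)) \leq |\mathcal{N}| \cdot \vol(B(0, 2\gee)) = \cN(V, \gee) \cdot (2\gee)^N \vol(\cB^N)$.

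Second, I would plug in the covering bound from \cref{thm:general_cover_number}, namely $\cN(V, \gee) \leq 2K \cdot (2 t n N^{3/2} / \gee)^n$, into the previous inequality. After rearranging the powers of $\gee$, this yields
\begin{equation*}
    \vol(\T(V, \gee)) \leq \vol(\cB^N) \cdot 2^N \cdot \gee^{N-n} \cdot K \cdot 2 \cdot (2 t n N^{3/2})^n,
\end{equation*}
and the leading constants $2 \cdot 2^n$ get absorbed into the absolute constant $\const$ in the stated form $(\const t N^{3/2} n)^n$. There is no real obstacle here since both steps are routine; the only place to be slightly careful is making sure the valid range $\gee \in (0, \diam(V)]$ from \cref{thm:general_cover_number} is not an issue, which it is not because for $\gee > \diam(V)$ the set $V$ lies in a single ball and the tubular volume bound becomes trivial.
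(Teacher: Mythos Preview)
Your proposal is correct and follows essentially the same approach as the paper's proof: the paper simply states that by the triangle inequality $\T(V,\gee) \subseteq \bigcup_{\x \in U} B(\x, 2\gee)$ for any $\gee$-net $U$, and then substitutes the bound from \cref{thm:general_cover_number}. Your write-up is more detailed (explicitly handling the constants and the range of $\gee$), but the argument is identical.
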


\begin{proof}
    By triangle inequality, for any $\gee$ net $U$ on $V$, we have $\T(V, \gee) \sset \Union_{\x \in U} B(\x, 2\gee)$.
    Substituting in our bound in \cref{thm:general_cover_number} finishes the proof.
\end{proof}

We can then apply this to bound the volume of tubular neighborhoods of polynomial images.
The following version of \cref{cor:tubvol_regset} is specialized to the setup studied in \cite{lotz2015volume,basu2022hausdorff}.

\begin{corollary}\label{cor:tubvol_image}
    Let $V \in \R^N$ be the image of a polynomial map whose coordinate functions have a degree at most $d$.
    Suppose the dimension of $V$ is bounded by $n$.
    Given $\v p \in \R^N$ and $\gs > 0$, let $\x$ be uniformly distributed in $B(\v p, \gs)$.
    Denote $\dist(\x, V)$ the minimal Euclidean distance between $\x$ and $V$.
    Then for any $\gee > 0$,
    \begin{equation*}
        \log \pr\Big(\dist(\x, V) \leq \gee\Big) \leq (N - n)\log (\gee/\gs) + N \log 2 + \const \cdot\Big(n\log d + n\log N \Big).
    \end{equation*}
\end{corollary}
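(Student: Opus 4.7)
The plan is to turn the probability into a volume ratio, localize $V$ to the ball where the event lives, and then feed the localized set into the tube volume bound \cref{cor:tubvol_regset} together with the polynomial regularity statement \cref{lem:polynom_regularity}.

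First I would write
\begin{equation*}
\pr\bigl(\dist(\x, V) \leq \gee\bigr) \,=\, \frac{\vol\bigl(\T(V, \gee) \cap B(\v p, \gs)\bigr)}{\vol\bigl(B(\v p, \gs)\bigr)}.
\end{equation*}
The key geometric observation is that every $\v y$ with $\dist(\v y, V) \leq \gee$ and $\v y \in B(\v p, \gs)$ is within distance $\gee$ of a point of $V$ whose distance to $\v p$ is at most $\gs + \gee$. Setting $W := V \cap B(\v p, \gs + \gee)$, this gives $\T(V,\gee) \cap B(\v p, \gs) \subseteq \T(W, \gee)$, so it suffices to estimate $\vol(\T(W, \gee))$.

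The key structural step is to exhibit $W$ as a bounded regular set. If $V = \im(p)$, then the translated map $p - \v p$ is polynomial of the same degree $d$, and $W - \v p = \im(p - \v p) \cap ((\gs + \gee)\cdot \cB^N)$. By \cref{lem:polynom_regularity}(b) this is $((4d)^{n+1}, n)$ regular, and since regularity is translation invariant, $W$ itself is $((4d)^{n+1}, n)$ regular with $[W]_N \leq \gs + \gee$. Feeding these parameters into \cref{cor:tubvol_regset} yields
\begin{equation*}
\vol(\T(W, \gee)) \,\leq\, \vol(\cB^N)\, 2^N \gee^{N-n} (4d)^{n+1} \bigl(\const\,(\gs + \gee)\, N^{3/2}\, n\bigr)^n.
\end{equation*}

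Finally I would divide by $\vol(B(\v p, \gs)) = \vol(\cB^N)\gs^N$ and take logarithms, isolating the $(N-n)\log(\gee/\gs) + N\log 2$ contribution and bounding the remainder by $\const(n \log d + n \log N)$. The arithmetic splits into two regimes: for $\gee \leq \gs$ the factor $(\gs + \gee)/\gs \leq 2$ is absorbed into the constant, while for $\gee > \gs$ the claimed right-hand side is already strictly positive whereas $\log \pr \leq 0$, so the inequality holds trivially. I do not anticipate any serious obstacle since the argument is a direct reduction to results already established; the only care needed is the translation invariance of regularity and the mild case split on $\gee$ versus $\gs$.
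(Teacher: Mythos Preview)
Your proposal is correct and follows essentially the same route as the paper: localize to $W = V \cap B(\v p, \gs+\gee)$, invoke \cref{lem:polynom_regularity}(b) to get $((4d)^{n+1}, n)$ regularity, apply \cref{cor:tubvol_regset}, and divide by $\vol(B(\v p,\gs))$. You are in fact slightly more careful than the paper in two places: you make the translation to a ball centered at the origin explicit before invoking \cref{lem:polynom_regularity}(b), and you handle the range $\gee > \gs$ separately (the paper's proof simply assumes $\gee \leq \gs$ in its simplification step).
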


\begin{proof}
    To begin with, we note that
    \begin{equation*}
        \pr\Big(\dist(\x, V) \leq \gee\Big) 
        \leq
        \frac{\vol\big(\T(V \inter B(\v p, \gs + \gee), \gee)\big)}{\vol\big(B(\v p, \gs)\big)}.
    \end{equation*} 
    Indeed, when $\T(V, \gee) \inter B(\v p, \gs)$ is empty, the probability is trivially 0.
    Otherwise, for any $\v y \in \T(V, \gee) \inter B(\v p, \gs)$, $\norm{\v y - \v p} \leq \gs$, and there exists some $\v z \in V$ so that $\norm{\v y - \v z} \leq \gee$.
    Thus, $\v z \in V \inter B(\v p, \gs + \gee)$, and since $\v y \in B(\v z, \gee)$, we have $\T(V, \gee) \inter B(\v p, \gs) \sset \T(V \inter B(\v p, \gs + \gee), \gee)$.
    The probability bound hence follows.

    We bound the numerator using \cref{cor:tubvol_regset}.
    To this end, $V \inter B(\v p, \gs + \gee) $ is a $((4d)^{n+1}, n)$ regular set by \cref{lem:polynom_regularity}. 
    Thus,
    \begin{equation*}
        \log \vol\big(\T(V \inter B(\v p, \gs + \gee), \gee)\big)
        \leq
        \log \vol(\cB^N) + (N-n) \log \gee + n \log \gs + N\log 2 + \const n\log(Nd),
    \end{equation*}
    where we used $n \leq N$ and $\gee \leq \gs$.
    Finally, with $\log \vol(B(\v p, \gs)) = \log(\vol(\cB^N)) + N \log \gs$, we arrive at the claimed bound on the logarithm of the probability.
\end{proof}

Similarly, one can deduce bounds for the volume of the tubular neighborhood for algebraic varieties and semialgebraic sets from \cref{thm:general_cover_number}.
When $V$ is a real algebraic variety, the corresponding bound on the volume has the same scaling as the one in \cite{basu2022hausdorff}. 
To our knowledge, our bound for tubular neighborhoods of general semialgebraic sets is new. 
Note that the semialgebraic set given by $\set{\forall k,~p_k = 0} \inter \set{\forall\l,~q_\ell \leq 0} \inter \set{\forall t,~r_t < 0}$ can have a lower dimension compared to the bigger variety $\set{\forall k,~p_k = 0}$.

\section{Applications}\label{sec:applic}

In this section, we present diverse applications of \cref{thm:general_cover_number}.
The first application is rather straightforward: we bound the covering number of tensors of low canonical polyadic (CP) rank.
The CP rank of a tensor $\t T$, sometimes simply called the rank, is a generalization of the matrix rank, i.e., the smallest number of rank-1 tensors that sum up to $\t T$ (see more details in \cref{sec:cp}).
Secondly, we study dimensionality reduction for polynomial images (and other regular sets), and how it can accelerate optimization problems (under a general loss).
Thirdly, we derive bounds on generalization errors for deep neural networks with rational and ReLU activations.  
This choice of applications illustrates the power of \cref{thm:general_cover_number} in different domains.

\subsection{Covering Number of Low Rank CP Tensors}\label{sec:cp}

In many tensor problems, low-rank decompositions can remarkably improve the computation and storage efficiency.
One classic example is approximating a general tensor by a tensor of low CP rank, which is a high order extension of low rank matrix approximation (see e.g., \cite{kolda2009tensor,anandkumar2014tensor,7891546} for the importance of low-rank tensor decompositions).
A covering number bound on the set of low rank tensors characterizes the approximation power.
Moreover, it gives guarantees on the accuracy when randomized dimension reduction techniques are applied to algorithms finding the low rank approximations (see \cref{sec:sketching}).
Among commonly-used low rank structures on tensors, covering number bounds for the tensor train (TT), hierarchical Tucker, and Tucker tensors format have been derived \cite{rauhut2017low,iwen2021modewise}.
However, to the best of our knowledge, there is no such bound available for low rank CP tensors (defined below), despite CP tensors being one of the most popular low rank tensor formats. 
We show how to use \cref{thm:general_cover_number} to fill this gap.
In fact, \cref{thm:general_cover_number} is general enough to give covering number bounds for arbitrary tensor networks \cite{orus2014practical}.

Let $\t T \in \R^{n_1\times \ldots\times n_d}$ be a $d$th order tensor.
The tensor is said to be of \textit{CP rank at most $r$} if there exist vectors $\v{a}^{(j)}_i \in \R^{n_i}$  ($i = 1,\ldots, d$ and $j=1, \ldots, r$) such that
\begin{equation*}
    \t T  =  \sum_{j = 1}^r \v a_1^{(j)}\otimes \ldots\otimes \v a_d^{(j)}.
\end{equation*}
Here $\v a\otimes\v b$ denotes the tensor product.
We call $\m A_i = \big{(}\v a^{(1)}_i|\ldots|\v a^{(r)}_i\big{)} \in \R^{n_i\times r}$ ($i = 1,\ldots,d$) the factor matrices, and denote $\t T = \cp(\m A_1,\ldots,\m A_d)$.
In the matrix case $\t T \in \R^{n_1\times n_2}$ of rank $r$, the formula reads in the familiar way $\t T = \A_1\A_2^\top$.
Our new bound on the covering number of low rank CP tensors is as follows.

\begin{theorem}
    \label{thm:covering_low_cp_rank}
    Let $V \sset \R^{n_1\times\ldots\times n_d}$ be the set of tensors with CP rank at most $r$ where $n_i \geq 2$ for each $i$.
    Denote $\overline{n} = \frac{1}{d} \sum_{i = 1}^d n_i$ as the average dimension.
    Then for any $t > 0$, $\gee \in (0, 2t]$,
    \begin{equation*}
        \log \cN(V \inter t \cdot \cB^{n_1\times \ldots \times n_d}, \gee)
        \leq
        rd \overline{n} \cdot \log(t/\gee)
        + 
        \const r d \overline{n} \cdot \sum_{i = 1}^d \log n_i.
    \end{equation*}
    If in fact $r \leq \min_i n_i$, then for some $c_1, c_2 \geq 1$,
    \begin{equation*}
        \log \cN(V \inter t \cdot \cB^{n_1\times \ldots \times n_d}, \gee)
        \leq 
        rd\overline{n} \log(c_1 d t/\gee) + c_2 d^2r^2 \log r - dr^2\log(c_1),
    \end{equation*}
    Since $V$ is a cone, the bounds above also apply to $\log \cN(\pi(V), \gee)$ with $t = 1$ and $rd\overline{n}$ replaced by $rd\overline{n}-1$.
\end{theorem}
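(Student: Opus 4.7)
The plan is to view $V$ as the image of the CP parametrization $\cp:\R^{rd\overline{n}}\to\R^{N}$ with $N=n_1\cdots n_d$, whose coordinate functions are degree-$d$ multilinear polynomials in the factor-matrix entries. For the first bound, I would apply \cref{lem:polynom_regularity}(b) to conclude that $V\cap t\cdot\cB^N$ is $((4d)^{rd\overline{n}+1}, rd\overline{n})$ regular, and then invoke \cref{thm:general_cover_number} directly; expanding $\log N = \sum_i \log n_i$ and using $\log d \le d\log 2 \le \sum_i \log n_i$ (valid since each $n_i\ge 2$) absorbs the $\log d$ term into the leading additive constant, yielding the first inequality.

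For the refined bound assuming $r\le\min_i n_i$, the naive application just described would produce an unwanted $rd\overline{n}\sum_i \log n_i$ additive term. The key observation avoiding this is that every tensor of CP rank $\le r$ admits a Tucker-style factorization $\t T = (\U_1,\ldots,\U_d)\cdot\t C$, where $\U_i\in\R^{n_i\times r}$ has orthonormal columns spanning the span of the mode-$i$ fibers of $\t T$, and the core $\t C\in\R^{r\times\cdots\times r}$ has CP rank $\le r$ with $\|\t C\|_F = \|\t T\|_F\le t$. This factorization decouples the $n_i$-dependence (encoded in the Grassmannian factors $W_i=\mathrm{col}(\U_i)$) from the remaining ``CP complexity'' of the $r^d$-dimensional core.

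My plan is then: (i) take an $\eta_1$-net on each Grassmannian $G(n_i, r)$, using the standard bound $\cN(G(n_i,r),\eta_1)\le (c/\eta_1)^{r(n_i-r)}$; (ii) for each tuple of net subspaces $(W_1,\ldots,W_d)$, identify $\bigotimes_i W_i$ isometrically with $\R^{r^d}$ via a choice of orthonormal bases, and apply \cref{thm:general_cover_number} through \cref{lem:polynom_regularity}(b) to the set of rank-$\le r$ core tensors in $t\cdot\cB^{r^d}$, contributing $dr^2\log(t/\eta_2)+O(d^2 r^2\log r)$; (iii) glue via the multilinear Lipschitz bound $\|\t T - (\U_1',\ldots,\U_d')\cdot\t C'\|_F \lesssim dt\eta_1+\eta_2$, which follows from the telescoping identity for the Kronecker product together with the fact that mode-wise projection does not increase CP rank. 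Balancing $\eta_1\asymp\gee/(dt)$ and $\eta_2\asymp\gee$ and summing logarithms yields
\begin{equation*}
    (rd\overline{n}-dr^2)\log(c_1 dt/\gee)+dr^2\log(2t/\gee)+O(d^2 r^2\log r),
\end{equation*}
which regroups into the claimed form $rd\overline{n}\log(c_1 dt/\gee)+c_2 d^2 r^2\log r - dr^2\log c_1$ by collecting the $-dr^2\log c_1$ correction from the first block against the $\log(2t/\gee)$ term from the second.

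The main obstacle will be step (iii): one must pick a gauge representative $\U_i'$ for each net subspace $W_i$ so that $\|\U_i-\U_i'\|_2 = O(\eta_1)$ (this requires a principal-angle alignment of orthonormal frames rather than an arbitrary choice of basis), and verify that the projected tensor $(\U_i \U_i^\top, \ldots) \cdot \t T$ onto $\bigotimes_i W_i$ still has CP rank at most $r$ so that \cref{lem:polynom_regularity} genuinely applies to the core set. Finally, since $V$ is a cone, the bounds for $\cN(\pi(V),\gee)$ follow by invoking \cref{lem:polynom_regularity}(c) throughout the above arguments, which lowers the second regularity parameter by one and hence replaces $rd\overline{n}$ by $rd\overline{n}-1$ in both bounds with $t=1$.
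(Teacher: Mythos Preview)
Your proposal is correct and follows essentially the same approach as the paper. Both arguments orthogonalize the CP factors to separate a Grassmannian component (the column spans $\spn(\m U_i)$) from a small ``core'' CP tensor in $\R^{r\times\cdots\times r}$, net the Grassmannians via the Szarek--Pajor bound and the core via the first inequality of the theorem, and glue using the telescoping Kronecker-product estimate after aligning orthonormal frames by principal angles (the paper isolates this alignment step as a separate lemma, \cref{lem:grass2stief}, which is exactly the ``gauge representative'' issue you flag in step~(iii)).
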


\begin{remark}
    The first bound in \cref{thm:covering_low_cp_rank} applies to any rank $r$, and has an essentially optimal dependence in $\gee$. 
    The second bound in \cref{thm:covering_low_cp_rank} applies only to small $r$, but in this regime it improves the term independent of $\gee$, achieving an optimal linear dependence on $n_i$.
\end{remark}

We need the following technical lemma for the second bound in \cref{thm:covering_low_cp_rank}.

\begin{lemma}[restate = grasstostief, name = ] \label{lem:grass2stief}
    Let $E$ and $F$ be two points on the Grassmannian manifold $G_n^k$. 
    Let $\m P_E$, $\m P_F$ be the orthogonal projections to $E$ and $F$, respectively.
    If $\norm{\m P_E - \m P_F}_2 \leq \gee$, then for any basis $\m Q_E$ of $E$, there is a basis $\m Q_F$ of $F$ such that $\norm{\m Q_E - \m Q_F}_2 \leq \sqrt{2}\gee$.
\end{lemma}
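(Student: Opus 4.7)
The plan is to reduce the statement to the classical principal-angle (CS) decomposition between the two $k$-dimensional subspaces $E, F \sset \R^n$. Let $\theta_1 \leq \cdots \leq \theta_k \in [0, \pi/2]$ denote the principal angles between $E$ and $F$. The CS decomposition provides orthonormal bases $\m U = [\v u_1 \,|\, \cdots \,|\, \v u_k]$ of $E$ and $\m V = [\v v_1 \,|\, \cdots \,|\, \v v_k]$ of $F$ satisfying $\m U^\top \m V = \diag(\cos\theta_1, \ldots, \cos\theta_k)$, and a classical identity (valid since $\dim E = \dim F$) gives $\norm{\m P_E - \m P_F}_2 = \sin \theta_k$.

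Given the arbitrary orthonormal basis $\m Q_E$ of $E$, I would write $\m Q_E = \m U \m R$ for a uniquely determined orthogonal $k \times k$ matrix $\m R$, and then define the candidate orthonormal basis of $F$ by $\m Q_F := \m V \m R$. By orthogonal invariance of the operator norm, $\norm{\m Q_E - \m Q_F}_2 = \norm{(\m U - \m V)\m R}_2 = \norm{\m U - \m V}_2$, so it remains to estimate $\norm{\m U - \m V}_2$ in terms of the principal angles.

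Next I would compute $\norm{\m U - \m V}_2$ directly. A short calculation shows that the columns $\v u_i - \v v_i$ of $\m U - \m V$ are pairwise orthogonal: the Gram matrix equals $(\m U - \m V)^\top (\m U - \m V) = 2\m I_k - 2\diag(\cos\theta_i) = \diag(2 - 2\cos\theta_i)$. Hence $\norm{\m U - \m V}_2 = \max_i \sqrt{2 - 2\cos\theta_i} = 2\sin(\theta_k / 2)$.

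The final step is the trigonometric estimate $2\sin(\theta/2) \leq \sqrt{2}\sin\theta$ valid for $\theta \in [0, \pi/2]$, which is equivalent to $\cos(\theta/2) \geq 1/\sqrt{2}$ and hence immediate since $\theta_k/2 \leq \pi/4$. Combining this with $\sin\theta_k = \norm{\m P_E - \m P_F}_2 \leq \gee$ yields $\norm{\m Q_E - \m Q_F}_2 = 2\sin(\theta_k/2) \leq \sqrt{2}\gee$, as desired. There is no real obstacle here: the only non-trivial ingredients are the existence of principal vector bases via the CS decomposition and the identity $\norm{\m P_E - \m P_F}_2 = \sin\theta_k$, both of which are standard in the subspace-perturbation literature.
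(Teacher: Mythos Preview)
Your proposal is correct and follows essentially the same approach as the paper's proof: both use principal-angle bases $\m U,\m V$ with $\m U^\top\m V$ diagonal, transfer to the given $\m Q_E$ via a right orthogonal factor, compute $\norm{\m U-\m V}_2$ through the Gram matrix, and finish with the equivalent inequality $\sqrt{2(1-\cos\theta_k)}\le\sqrt{2}\sin\theta_k$. The only cosmetic difference is that the paper explicitly writes out the SVD of $\m P_E-\m P_F$ to obtain $\min_i d_i\ge\sqrt{1-\gee^2}$, whereas you invoke the standard identity $\norm{\m P_E-\m P_F}_2=\sin\theta_k$ directly.
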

\begin{proof}
See \cref{app:cp}.
\end{proof}

\begin{proof}[proof of \cref{thm:covering_low_cp_rank}]
    A rank at most $r$ tensor $\t T$ can be written as $\t T = \sum_{j = 1}^r \bigotimes_{i = 1}^d \v a^{(j)}_i$ for some vectors $\v a^{(j)}_i \in \R^{n_i}$.
    Thus, $V$ is the image of a polynomial with coordinate function of degree $d$ and $r \sum_i n_i$ variables, and $\diam(V) = 2t$.
    The first bound follows directly from \cref{lem:polynom_regularity} and \cref{thm:general_cover_number} after simplification. 
    When the cone $V$ is projected to the sphere, by \cref{lem:polynom_regularity} we can reduce $rd\overline{n}$ by 1.

    For the second bound, since $V$ is a cone, it is sufficient to prove the claim assuming $t = 1$.
    Take $t = 1$, and thus $\gee \leq 2$.
    We exploit the idea of stabilizing the parametrization via canonicalization (see e.g., \cite{lee2014fundamental,zhang2020stability}).
    Let $\t T = \cp(\m A_1,\ldots,\m A_d)$, with $\m A_i \in \R^{n_i\times r}$ being the factor matrices.
    We can orthogonalize $\m A_i$ by $\m A_i = \m U_i \m R_i$ for some orthonormal basis $\m U_i$.
    Here if $\m A_i$ is not full rank, we put the rank deficiency in the $\m R_i$ factor and keep $\m U_i$ as an orthonormal matrix in $\R^{n_i \times r}$.
    Now $\t T$ as a vector can be written as
    \begin{equation*}
        \vec(\t T) = \Big(\bigotimes_{j = 1}^d \m U_j\Big) \cdot \vec(\cp(\m R_1,\ldots,\m R_d)),
    \end{equation*}
    where $\vec(\cdot)$ flattens a tensor into a vector.
    Denote $\v r = \vec(\cp(\m R_1,\ldots,\m R_d))$.
    By the triangle inequality, for a different rank at most $r$ tensor $\wh{\t T} = \cp(\wh{\m U}_1 \wh{\m R}_1,\ldots, \wh{\m U}_d \wh{\m R}_d)$ of norm at most 1, we have
    \begin{align} \label{eq:err_decomp_cp}
        \|\t T - \wh{\t T}\|_F 
        &\leq \Big\| \Big(\bigotimes_{j = 1}^d \m U_j \Big) \cdot \v r - \Big(\bigotimes_{j = 1}^d {\m U}_j \Big) \cdot \wh{\v r} \Big\| + \Big\| \Big(\bigotimes_{j = 1}^d \m U_j \Big) \cdot \wh{\v r} - \Big(\bigotimes_{j = 1}^d \wh{\m U}_j \Big) \cdot \wh{\v r} \Big\| \nonumber \\
        &\leq  \| \v r - \wh{\v r} \| + \Big\| \Big(\bigotimes_{j = 1}^d \m U_j \Big) - \Big(\bigotimes_{j = 1}^d \wh{\m U}_j \Big) \Big\|_2 \nonumber \\
        &\leq \| \v r - \wh{\v r} \| + \sum_{j=1}^d \| \m U_j - \wh{\m U}_j \|_2
    \end{align}
    where $\hat{\v r} = \vec(\cp(\wh{\m R}_1,\ldots,\wh{\m R}_d))$.

    To construct an $\gee$ net on $V$, we put an $\gee/(2\sqrt{2} d)$ net on each Grassmannian $G_{n_i}^k$, whose size can be bounded by \cite{pajor1998metric,szarek1997metric}, and we use \cref{lem:grass2stief} to bound the error terms $\|\m U_i - \wh{\m U}_i\|_2$.
    Then we put another $\gee/2$ net on the space of all possible $\v r$ vectors, which we denote as $V_R$.
    The size of this net can be bounded using the first bound of this theorem. 

    To the latter end, by our first bound in the statement, 
    \begin{equation} \label{eq:cover_core}
        \log\cN(V_R, \gee/2) \leq dr^2 \log(d/\gee) + c_2 d^2r^2\log r, 
    \end{equation}
    Denote $W_R$ a minimal $\gee/2$ net of $V_R$.
    Next, by \cite{szarek1997metric,pajor1998metric} we have\footnote{For points $E$ and $F$ in $G_{n}^k$, the 2-norm in \eqref{eq:cover_q} is $\norm{E - F}_2 := \norm{\m P_E - \m P_F}_2$, with $\m P_E$ and $\m P_F$ being the projections to $E$ and $F$, respectively.}
    \begin{equation} \label{eq:cover_q}
        \log\cN(G_{n_i}^r, \gee/(2\sqrt{2}d), \norm{\cdot}_2) \leq \left(n_i r - r^2\right) \log(c_1 d/\gee).
    \end{equation}
    Now for each $i \in [d]$, find a minimal $\gee/(2\sqrt{2}d)$ net $W_i$ on $G_{n_i}^r$.
    For each point $E$ in the net, fix an arbitrary orthonormal basis $\m Q_E$ of $E$ to represent $E$.
    To complete the proof, we claim that the tensors in $V$ with $\m U_i$ factors in $\set{\m Q_E: E\in W_i}$ and $\v r$ vectors in $W_R$ form an $\gee$ net on $V$.

    Indeed,
    for any $\t T = \cp(\m U_1\m R_1,\ldots,\m U_d\m R_d)$, look at its $i$th factor $\m U_i\m R_i$.
    Let $F_i = \spn(\m U_i)\in G_{n_i}^k$.
    By the net assumption of $W_i$ and \cref{lem:grass2stief}, we can find some $E_i\in W_i$ and another basis $\m Q_{F_i}$ of $F_i$ so that $\|\m Q_{E_i} - \m Q_{F_i}\|_2 \leq \gee/(2d)$.
    Under the new basis of $F$, we have $\m A_i = \m Q_{F_i} \wt{\m R}_i$ for some new coordinates $\wt{\m R}_i$.
    Repeat this reparametrization process for each $i \in [d]$.
    Let the new $\v r$ vector for $\t T$ be $\tilde{\v r} = \vec(\cp(\wt{\m R}_1,\ldots,\wt{\m R}_d))$.
    We can then find some $\v r \in W_R$ so that $\norm{\v r - \tilde{\v r}}_2 \leq \gee/2$.
    By \eqref{eq:err_decomp_cp}, we get
    \begin{equation*}
        \Big\|\vec(\t T) - \Big(\bigotimes_{i = 1}^d \m Q_{E_i}\Big)\cdot \v r\Big\| \leq d\cdot \gee/(2d) + \gee/2 = \gee. 
    \end{equation*}
    The latter term on the left-hand side is an element of the net, and thus the claim is justified.
    Finally, the size of the $\gee$ net on $V$ is bounded by
    \begin{align*}
        \log\cN(V \inter \cB^{n_1\times \ldots\times n_d}, \gee)
        &\leq
        \log\cN(V_R, \gee/2) + d\log\cN(G_{n_i}^r, \gee/(2\sqrt{2}d), \norm{\cdot}_2)\\
        &=
        rd\overline{n} \log(c_1 d t/\gee) + c_2 d^2r^2 \log r - dr^2\log(c_1).
    \end{align*} 
    The proof is thus complete.
    When the cone $V$ is projected to the sphere, it is equivalent to project the cone $V_R$ to the sphere. 
    Thus, in \eqref{eq:cover_core}, we can replace the first term by $(dr^2-1)\log(d/\gee)$.
    This leads to reducing $rd\overline{n}$ to $rd\overline{n}-1$ in the final bound.
\end{proof}

As a quick illustration of the covering number, we derive a bound for the probability that a random Gaussian tensor is $\gee$-close to a CP rank at most $r$ tensor.

\begin{corollary} \label{cor:cp_approx}
    Let $\t T \in \R^{n_1\times\ldots\times n_d}$ be a random tensor with i.i.d. standard Gaussian entries.
    Denote $\measuredangle(\t T, \t T')$ the angle between $\t T$ and $\t T'$ under the Frobenius inner product.
    Denote $N := \prod_i n_i$.
    Suppose $N\geq 8$.
    Then for any $r \leq \min_i n_i$ and $\gee \leq \pi/6$, we have 
    \begin{equation*}
        \pr\Big(\exists \,\t T' \text{~of rank at most~} r \text{~s.t.~} \measuredangle(\t T, \t T') \leq \gee \Big)
        \leq 
        \frac{\sin(2\gee)^{N-1}}{\gee^{rd\overline{n}-1}} \cdot \frac{(c_1 d)^{rd\overline{n}}\cdot r^{c_2 d^2 r^2}}{\sqrt{N} (c_1)^{dr^2}}
        =
        \cO\Big(\gee^{N - rd\overline{n}}\Big).
    \end{equation*}
    where $c_1,c_2 \geq 1$ are absolute constants.
\end{corollary}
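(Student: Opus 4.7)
The plan is to reduce the statement to a sphere-covering union bound via the rotational invariance of the Gaussian distribution. Because $\t T$ has i.i.d.\ standard Gaussian entries, $\pi(\t T) = \t T/\|\t T\|_F$ is uniformly distributed on $S^{N-1} \subseteq \R^N$ with $N = \prod_i n_i$. Since $V$ is a cone and $\measuredangle(\t T,\t T')$ depends only on $\pi(\t T),\pi(\t T')$, the event in question is equivalent to $\pi(\t T)$ lying within angular distance $\varepsilon$ of the projected set $\pi(V) \subseteq S^{N-1}$.

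First I will construct an angular $\varepsilon$-net $W \subseteq \pi(V)$. Two unit vectors at Euclidean distance $2\sin(\varepsilon/2)$ subtend an angle of at most $\varepsilon$, so every Euclidean $2\sin(\varepsilon/2)$-net of $\pi(V)$ is also an angular $\varepsilon$-net. Applying the second (projected-cone) bound of \cref{thm:covering_low_cp_rank}, and using $\sin(x) \geq 2x/\pi$ on $[0,\pi/2]$ to convert $\log(1/(2\sin(\varepsilon/2)))$ into $\log(1/\varepsilon)$ modulo an absolute constant absorbed into $c_1$, I obtain
\[
    \log|W| \,\leq\, (rd\overline{n}-1)\log(c_1 d/\varepsilon) + c_2 d^2 r^2 \log r - d r^2 \log c_1.
\]
By the angular triangle inequality, the event forces $\measuredangle(\pi(\t T),\mathbf{w}) \leq 2\varepsilon$ for some $\mathbf{w} \in W$, reducing the task to a union bound over $W$.

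The remaining ingredient is an upper bound on the fractional area of an angular cap of radius $2\varepsilon$ on $S^{N-1}$. Using the elementary estimate
\[
    \int_0^\theta \sin^{N-2}(\phi)\, d\phi \,\leq\, \frac{\sin^{N-1}(\theta)}{(N-1)\cos\theta}, \qquad \theta \in (0,\pi/2),
\]
together with the Beta-function identity $\int_0^\pi \sin^{N-2}(\phi)\,d\phi = \sqrt{\pi}\Gamma((N-1)/2)/\Gamma(N/2)$ and the Stirling estimate $\Gamma(N/2)/\Gamma((N-1)/2) \leq \sqrt{N/2}$, the cap fraction is bounded by $C \sin^{N-1}(2\varepsilon)/\sqrt{N}$ for $N \geq 8$, where the hypothesis $\varepsilon \leq \pi/6$ is used to ensure $\cos(2\varepsilon) \geq 1/2$. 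Multiplying $|W|$ by this cap fraction and collecting factors reproduces the claimed inequality, and the final $\cO(\varepsilon^{N-rd\overline{n}})$ follows from $\sin(2\varepsilon) \leq 2\varepsilon$ with $d, r$ treated as fixed.

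The main technical burden is the bookkeeping of constants so that the three non-$\varepsilon$ factors $(c_1 d)^{rd\overline{n}}$, $r^{c_2 d^2 r^2}$, and $(c_1)^{-dr^2}$ align with the output of \cref{thm:covering_low_cp_rank} after the Euclidean-to-angular conversion, and the sharp cap-area computation; everything else is routine.
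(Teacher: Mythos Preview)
Your proposal is correct and follows essentially the same approach as the paper: reduce to $\pi(\t T)$ uniform on $S^{N-1}$, cover $\pi(V)$ by a Euclidean $2\sin(\varepsilon/2)$-net using the projected-cone bound of \cref{thm:covering_low_cp_rank}, and multiply the net size by the fractional area of a spherical cap of angular radius $2\varepsilon$. The only difference is cosmetic: the paper quotes the cap bound $\vol\cB(2\varepsilon,d_g)/\vol S^{N-1}\le \sin(2\varepsilon)^{N-1}/(2\cos(2\varepsilon)\sqrt{N})$ from \cite{boucheron2013concentration}, whereas you derive the same estimate directly from the integral representation and a Gamma-ratio bound.
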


\begin{proof}
    Denote $V$ the cone of tensors of rank at most $r$.
    Since $\pi(\t T)$ is uniformly distributed on the sphere, it suffices to bound the volume of the tubular neighborhood around $\pi(V)$.
    On the sphere, it is convenient to use the geodesic distance $d_g$.
    The covering number of geodesic balls of $\pi(V)$ is then bounded by
    \begin{equation*}
        \cN(\pi(V), \gee, d_g) \leq \cN(\pi(V), 2\sin(\gee/2), \norm{\cdot}_2).
    \end{equation*}
    We then bound the volume of its tubular neighborhood on the sphere by
    \begin{equation*}
        \vol \T(\pi(V), \gee, d_g) \leq \cN(\pi(V), 2\sin(\gee/2),\norm{\cdot}_2) \cdot \vol \cB(2\gee, d_g),
    \end{equation*}
    where $\cB(2\gee, d_g)$ is the geodesic ball of radius $2\gee$.
    By \cite[Exercise 7.9]{boucheron2013concentration} we have the following tight bound for $\vol \cB(2\gee, d_g)$ when $\cos(2\gee) \geq \sqrt{2/N}$, which holds as $\gee \leq \pi/6$ and $N \geq 8$:
    \begin{equation*}
        \frac{\vol \cB(2\gee, d_g)}{\vol \cS^{N-1}} \leq \frac{1}{2\cos(2\gee)\sqrt{N}}\sin(2\gee)^{N-1} \leq \frac{\sin(2\gee)^{N-1}}{\sqrt{N}}.
    \end{equation*}
    Finally, note that the probability of interest is simply $\vol \T(\pi(V), \gee, d_g) /\vol \cS^{N-1}$.
    We use the relaxation $2\sin(\gee/2) \geq \const \gee$ for $\gee \leq \pi/6$, and substitute in our bound for $\cN(\pi(V), \const \gee, \norm{\cdot}_2)$ 
    in \cref{thm:covering_low_cp_rank}.
    This leads to the bound given in the corollary statement. 
\end{proof}

\begin{remark}
    \cref{cor:cp_approx} implies that given a random tensor the probability to obtain a good low rank approximation decays fast as $\gee \rightarrow 0$, especially when $\overline{n}$ is large. 
    As a numerical reference, set $d = 3$, $n_i = \overline{n} = 100$, $r = 30$, and take the constants in the bound to be 3.
    Then for $\gee = \pi/6$ we have $\pr < e^{-38567}$, and for $\gee = \pi/7$ we have $\pr < e^{-139455}$.  
    The conclusion is that low rank tensor decomposition requires special structure in the input tensor, e.g., as in \cite{udell2019big,zhang2022moment}.
\end{remark}

\subsection{Dimension Reduction of Regular Sets via Random Sketching} \label{sec:sketching}

In this section, we use the covering number bound developed in \cref{sec:main_result} to derive dimension reduction guarantees for regular sets.
Specifically, for a regular set $V \sset \R^N$, 
we find an upper bound on the sketching dimension $m$ ($m\ll N$), so that for a random matrix $\S:\R^N\rightarrow\R^m$ (either a sub-Gaussian matrix or an FJLT-like matrix, see \cref{def:gauss_rp} and \cref{def:sors}), with high probability it holds $\|\S\x\| \approx \|\x\|$ for every $\x\in V$.

\subsubsection{Backgrounds and Motivation}\label{sec:sketch-background}

As the dimension of datasets in modern applications grows, overcoming the curse of dimensionality becomes a key concern.
Among many attempts at dimension reduction (a.k.a. sketching),
one of the best known tools is the Johnson-Lindenstrauss transform \cite{johnson1984extensions}.
It compresses a finite set $\set{\x_i}_i$ down to a lower dimension by the map $\x_i\mapsto \S\x_i$, where $\S$ is a short-fat random matrix with properly rescaled i.i.d. Gaussian entires.
When the number of rows in $\S$, called the \it{sketching dimension}, is sufficiently large, $\S$ is an approximate isometry on this finite set, with high probability.

In more recent developments, researchers found better designs of the random matrix $\S$ that improve the efficiency of sketching. These include the sub-Gaussian sketch and the fast Johnson-Lindenstrauss transform (FJLT). The latter has been generalized to a broader class called the subsampled orthogonal matrix with random signs (SORS).
We give precise definitions below. For more details on theory and motivations of these sketching matrices, see e.g., \cite{achlioptas2003database-friendly,ailon2006approximate,martinsson2020randomized,woodruff2014sketching,vershynin2018high,iwen2022fast,dirksen2016dimensionality}.

\begin{definition}[sub-Gaussian sketch]
    \label{def:gauss_rp}
    Let $\G \in \R^{m\times M}$ be a random matrix such that the entries $G_{ij}$ are independent, mean zero, unit variance random variables with sub-Gaussian norm $\norm{G_{ij}}_{\psi_2} \leq \ga$.\footnote{This is the Orlicz norm with $\psi(x) = e^{x^2}-1$. If $\|X\|_{\psi_2} \leq \alpha$, its tail decays at least as fast as $e^{-\const(x/\alpha)^2}$, which justifies the name sub-Gaussian. See \cite{vershynin2018high}.}
    We call $\S = \frac{1}{\sqrt{m}}\G$ a \textup{sub-Gaussian random projection} with parameter $\ga$, denoted as $\S \in \subG(\ga)$. \footnote{We omitted the dimensions $m$ and $M$, since they will always be inferred from the context. Same for the next definition.}
\end{definition}

\begin{definition}[SORS sketch] \label{def:sors}
    Let $\m P \in \R^{m \times M}$ be the operator that uniformly randomly selects $m$ rows of a matrix (with replacement).
    Let $\m H \in \R^{M\times M}$ be a deterministic orthogonal matrix such that $\max_{ij} |H_{ij}| \leq \frac{\beta}{\sqrt{M}}$.
    Let $\m D \in \R^{M\times M}$ be a diagonal matrix with its diagonal populated by independent Rademacher random variables (i.e., $D_{ii}$ is 1 or -1 with equal probability).
    We call $\S = \sqrt{\frac{M}{m}}\m P\m H\m D$ a \textup{subsampled orthogonal matrix with random signs (SORS)} with parameter $\gb$,
    denoted $\S \in \sors(\gb)$.
\end{definition}

A more recent direction in randomized sketching concerns constructing $\S$ that is an approximate isometry on an \it{infinite} set $V$.
When $V$ is a linear subspace, the corresponding $\S$ has the name \it{subspace embedding}, and relevant theory is abundant (see e.g., \cite{woodruff2014sketching,martinsson2020randomized} for reviews).
When $V$ is a general set, developed theories (see e.g., \cite{vershynin2018high,oymak2018isometric,iwen2022fast}) state we can bound the sketching dimension for sketching operators defined in \cref{def:gauss_rp} and \cref{def:sors}, as long as we can control the Gaussian width of the set.

Here we use the covering number bound developed in \cref{sec:main_result} to bound the Gaussian width of bounded regular sets through the Dudley integral, which then derives sketching dimension bounds for regular sets using a sub-Gaussian sketch or an SORS sketch, so that
\begin{equation*}
    \|\S \x\| \approx \|\x\|~~\text{for all }\x\in V.
\end{equation*} 
In particular, we compute this bound for $V$ being polynomial images, as well as the image of $\l\circ p$ where $\l$ is a general Lipschitz map and $p$ is a polynomial.

Among others, one possible downstream task of dimension reduction is optimization:
\begin{equation}\label{eq:minf}
    \min_{\v x} \norm{f(\x)}, 
\end{equation}
where $f:\R^n \rightarrow \R^M$ ($M\gg n$) is the residual function we wish to bring close to zero by choosing $\x$.
One example is $f$ is an overdetermined polynomial system, with more equations than unknowns, where we seek an approximate solution $\x$.
Of particular interest is the following:
\begin{equation}\label{eq:min_fp}
    \min_{\v x \in U\sset \R^n} \norm{\ell\circ p(\x)}^2,
\end{equation}
where $p: \R^n \rightarrow \R^N$ is a polynomial map whose coordinate functions are of degree at most $d$, and $\ell: \R^N \rightarrow \R^M$ is a Lipschitz function.
When $\ell$ is the identity, this is the polynomial optimization problem, including low rank matrix and tensor decomposition, completion, etc.
Adding the $\ell$ function allows us to deal with non-$\ell_2$ loss functions.
For example, if we want a robust optimization using the Huber loss ($\ell_2$ when near the origin and $\ell_1$ when far away) \cite{huber1992robust}, we can take $\ell$ to be an entrywise function taking the square-root of Huber loss in each coordinate.
We will provide guarantees of sketching dimension so that $\|\S \l\circ p(\x)\|\approx \|\l\circ p(\x)\|$ for all $\x \in U$.
This allows us to optimize the reduced problem instead of the original problem.
For more on how randomized dimensionality reduction can accelerate optimization algorithms, please see Remarks~\ref{def:sketch_f} and \ref{rem:GN} below, as well as e.g., \cite{cartis2020scalable,cartis2022randomised,cartis2023scalable,berahas2020investigation,gower2019rsn,pilanci2017newton,ergen2019random}.

\subsubsection{New Results on Dimension Reduction using Covering Number}
\label{sec:ours_opt}

Consider the problem \eqref{eq:min_fp}, and denote $f = \ell\circ p$ for convenience.
In order to obtain a  guarantee on the dimension reduction,
we seek sketch operators $\m S$ such that with high probability over $\m S$,
\begin{equation} \label{eq:sf_approx_f}
    \norm{\m S f(\x)} \approx \norm{f(\x)} \quad \text{for all inputs } \x.
\end{equation}
The paper \cite{li2017near} provided such a result when $\ell = \Id$, $\S$ is the count sketch matrix, and under additional assumptions on the polynomial system $p$.
Compared to \cite{li2017near}, our framework applies whenever $\im(\pi\circ f)$ is regular.
We also allow other common random ensembles (sub-Gaussian and SORS) and require no further assumptions on $p$.
By switching to sub-Gaussian and SORS ensembles, our sketching dimension bound also improves \cite{li2017near} by significantly reducing the poly-logarithmic factors.

Next we specify exactly what is meant by $\norm{\S f(\x)} \approx \norm{f(\x)}$ in \eqref{eq:sf_approx_f}.
For convenience, write $x = (1 \pm \eps) y$ if $(1-\eps) y \leq x\leq (1+\eps) y$.
Roughly speaking, the next definition requires that when one of $\|\S f(\x)\|$ and $\|f(\x)\|$ is very small, the other must also be very small; and when they are both large, they are close to each other.

\begin{definition}[$(\gee, \gd, \gt)$ sketch] \label{def:sketch_f}
    Let $f$ be a function defined on $U$.
    Given a tolerance level $\gt \in (0, 1)$ and some $\gee, \gd \in (0, 1)$, 
    we say that a random matrix $\m S$ is an $(\gee, \gd, \gt)$ sketch of $f$ on $U$, if with probability at least $1 - \gd$ over $\S$, the followings hold for all $\x \in U$: (i) $\norm{\S f(\x)} \leq \gt$ implies $\norm{f(\v x)} \leq (1 + \eps) \gt$; (ii) $\norm{f(\x)} \leq \gt$ implies $\norm{\S f(\v x)} \leq (1 + \eps) \gt$; and (iii) $\norm{\S f(\x)} > \gt$ implies $\norm{\S f(\x)} = (1 \pm \eps) \norm{f(\x)}$.
\end{definition}

\begin{remark}
    If $\S$ is an $(\eps, \delta, \tau)$ sketch of $f$, 
    then it is useful for the downstream task of optimization \eqref{eq:minf}, because minimizing $\|\S f(\x)\|$ is almost equivalent to minimizing $\|f(\x)\|$.
    Indeed, let $\x^*$ be the obtained minimizer of $\|\S f(\x)\|$. Then either $\|\S f(\x^*)\| \leq \tau$, so that $\|f(\x^*)\|$ is also satisfactorily small, or that $\x^*$ is almost an optimal solution of the unsketched problem since 
    \begin{equation*}
        \|f(\x)\| \geq (1-\eps)^{-1} \|\S f(\x)\| \geq (1-\eps)^{-1} \|\S f(\x^*)\| \geq (1+\eps)(1-\eps)^{-1}\|f(\x^*)\|
    \end{equation*} 
    for all $\x \in U$.
\end{remark}

\begin{remark} \label{rem:GN}
    Consider the reduced problem of minimizing $\|\S f(\x)\|$ using Gauss-Newton method.
    In each iteration, we solve
    \begin{equation} \label{eq:gn-reduced}
        \Delta\x_S^* = \argmin_{\gD\x}\norm{\m S \m J \cdot \gD\x + \m S f(\x)},
    \end{equation}
    where $\m J$ is the Jacobian of $f$ at $\x$,
    and update $\x \gets \x + \Delta \x^*_S$.
    Many sketched optimization algorithms generate $\S$ anew for each solve of \eqref{eq:gn-reduced}, so that local to current iterate $\x$, with high probability, $\Delta\x_S^*$ is a good candidate for the unsketched, exact Gauss-Newton step vector that solves
    \begin{equation} \label{eq:gn}
        \Delta \x^* = \argmin_{\gD\x}\norm{\m J \cdot \gD\x + f(\x)}.
    \end{equation}

    By contrast, our theory would enable the drawing of a sketch $\S$ that satisfies \eqref{eq:sf_approx_f} and remains unchanged in every iteration of the optimization.  What are the potential benefits of this?  
    Using the local strategy, in each iteration there is a chance of failure that $\Delta\x_S^*$ is not a faithful approximation to $\Delta\x^*$.
    In addition, there are errors committed to the algorithm by using $\Delta \x_S^*$ in each iteration.
    Both the failure chance and the error can accumulate during the optimization run.  
    We refer the readers to \cite{pilanci2017newton} for an investigation on this matter.
\end{remark}

Since $\|\S f(\x)\| = (1\pm\eps)\|f(\x)\|$ is equivalent to $\|\S \pi\circ f(\x)\| = (1\pm \eps)$,
our task is then to sketch the set $W = \im(\pi\circ f)$.
For sketching an arbitrary set $W$, \cite{oymak2018isometric,iwen2022fast} and \cite{dirksen2016dimensionality} respectively proved a bound on sketching dimension $m$ in order for $\|\S\x\| = (1\pm\eps)\|\x\|$ for all $\x\in W$ with high probability, when $\S$ is respectively $\sors(\beta)$ and $\subG(\alpha)$ (see also \cite{vershynin2018high}). 
The bounds are expressed in the Gaussian width of $W$, so our goal is then to control the Gaussian width of this set.
Recall the Gaussian width of a set $W \sset \R^M$ is defined as
\begin{equation}
    \label{eq:def_gauss_width}
    \go(W) = \E_{\v g\sim \cN(0, \m I_M)}\sup_{\v x\in W}\ip{\v g}{\v x}.
\end{equation}
It is related to covering numbers through the \textit{Dudley's integral:}  
\begin{equation} \label{eq:dudley_gauss}
    \go(W) \leq 2 \int_0^{\diam(W)} \sqrt{\log\cN(W, \gee)}\, d\gee.
\end{equation}
Next, we give our result for generalized polynomial optimization \eqref{eq:min_fp}. 
A cleaner bound when $\ell = \Id$ comes afterwards.

\begin{remark} \label{rmk:genwidth}
    In fact, when $\v g$ is a vector of independent mean zero sub-Gaussian random variables, the corresponding ``width'' defined as in \eqref{eq:def_gauss_width} can also be bounded by the integral \eqref{eq:dudley_gauss} with a potentially constant in front.
    In particular, we will use this generalization in \cref{sec:gen_err},
    where $\v g$ is populated with Rademacher random variables, the corresponding width is called the Rademacher complexity.
    For more background, we refer the readers to \cite{vershynin2018high,mohri2018foundations}.  
\end{remark}

\begin{theorem} \label{thm:sketch_opt}
    Let $U \sset \R^n$ and $p:U\rightarrow\R^N$ be a polynomial map with coordinate functions $p_i$ of degree at most $d$ and $\sup_{\x \in U} |p_i(\x)| \leq t$ for each $i$.
    Let $\ell:\R^N\rightarrow \R^M$ be Lipschitz.
    Fix $\gee,\gt,\gd \in (0, 1)$.
    Denote $\gl = \max(\const, d N t \norm{\ell}_{\lip} \gt^{-1})$.
    Then $\m S \in \R^{m \times M}$ is a $(\gee, \gd, \gt)$ sketch of $\ell\circ p$ on $U$ if
    \begin{enumi}
        \item $\S \in \subG(\ga)$ and
            \begin{equation} \label{eq:subG_f}
                m \geq \const \ga^2 \gee^{-2} \left[n \cdot \log \Big(\gl\ga + \gl\eps\sqrt{(M + \log(1/\gd))/n}\Big) + \log (1/\gd)\right]; \text{ or }
            \end{equation} 
        \item $\S \in \sors(\gb)$ and
            \begin{equation}\label{eq:sors_f}
                m \geq 
                \const \gD \cdot \log^2\gD\cdot \log(N/\gd)
            \end{equation}
            where $\gD = \beta^2\eps^{-2} n \cdot \log\big(\gl\sqrt{M}\big)\log(1/\gd)$.
    \end{enumi}
\end{theorem}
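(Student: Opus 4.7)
The plan is to reduce the three conditions of \cref{def:sketch_f} to a uniform approximate isometry of $\m S$ on a single bounded set, and then bound the Gaussian width (equivalently covering number) of that set using \cref{thm:general_cover_number}. Writing $f = \ell \circ p$ and $L = \norm{\ell}_{\lip}$, I would define
\begin{equation*}
    W \;=\; \set{f(\x)/\tau : \x \in U,\ \norm{f(\x)} \leq \tau} \;\cup\; \set{\pi(f(\x)) : \x \in U,\ \norm{f(\x)} \geq \tau} \;\subseteq\; \cB^M.
\end{equation*}
A short case analysis shows that if, with probability at least $1-\gd$, we have $\bigl|\,\norm{\S\v w} - \norm{\v w}\,\bigr| \le \gee/4$ for every $\v w \in W$, then (i), (ii), (iii) of \cref{def:sketch_f} all hold for $\gee$ with a harmless loss of a small constant factor (which I would absorb by running the argument at scale $\gee/4$).

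Next I would bound $\cN(W, \gh)$ via a Lipschitz reduction to a polynomial image and then invoke \cref{thm:general_cover_number}. Since $p$ takes values in $[-t,t]^N \subseteq t\sqrt{N}\cdot \cB^N$ and $\ell$ is $L$-Lipschitz, the map $\v x\mapsto f(\v x)/\tau$ from $\im(p)$ into $\cB^M$ is $(L/\tau)$-Lipschitz, and radial projection $\pi$ is $(2/\tau)$-Lipschitz on $\set{\norm{\v x} \ge \tau}$. Hence every $\gh$-cover of $\im(p) \inter (t\sqrt{N}\cdot \cB^N)$ at scale $\gh\gt/(2L)$ yields an $\gh$-cover of both pieces of $W$. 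By \cref{lem:polynom_regularity}, $\im(p) \inter (t\sqrt{N}\cdot \cB^N)$ is $((4d)^{n+1},n)$ regular with $[V]_N \leq t\sqrt{N}$, so \cref{thm:general_cover_number} gives
\begin{equation*}
    \log \cN(W,\gh) \;\leq\; n\log(\const\,\gl/\gh),
\end{equation*}
with $\gl = \max(\const, dNtL/\gt)$ as in the statement. Feeding this into Dudley's integral (together with $\diam(W)\le 2$) yields $\go(W) \lesssim \sqrt{n\log \gl}$.

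For the sub-Gaussian case, I would then build an $\gh$-net $T_0$ on $W$ with $\log|T_0| \le n\log(\const\,\gl/\gh)$, apply the Johnson--Lindenstrauss estimate on $T_0$ (good whenever $m \gtrsim \ga^2\gee^{-2}[\log|T_0|+\log(1/\gd)]$), and then extend to all of $W$ via $|\norm{\S\v w}-\norm{\v w}| \le |\norm{\S\v t}-\norm{\v t}| + (1+\norm{\S}_2)\gh$ for the closest $\v t \in T_0$. Using the standard high-probability upper bound $\norm{\S}_2 \lesssim \ga(1 + \sqrt{M/m} + \sqrt{\log(1/\gd)/m})$, I would pick $\gh \asymp \gee/[\ga(1+\sqrt{M/m})]$ and solve the resulting implicit inequality in $m$; the $\sqrt{(M+\log(1/\gd))/n}$ term inside the logarithm in \eqref{eq:subG_f} arises precisely from substituting this $\gh$ into $\log(\gl/\gh)$. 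For the SORS case, the same covering bound $\log\cN(W,\gh)\le n\log(\const\,\gl/\gh)$ plugs directly into the set-sketch theorem of Oymak--Recht--Soltanolkotabi (or its extension by Iwen et al.), whose bound is expressed as $m \gtrsim \gD\cdot \log^2\gD\cdot \log(N/\gd)$ with $\gD$ given by the Gaussian width squared of $W$; matching parameters yields \eqref{eq:sors_f}.

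The main obstacle is the sub-Gaussian step: the net-and-extend argument couples $m$ to the net scale $\gh$ through $\norm{\S}_2$, producing an implicit inequality whose solution contains a $\log\sqrt{M/m}$ term. Tracking constants carefully so that the resulting bound on $m$ simplifies to exactly the expression in \eqref{eq:subG_f} (and checking that the loss from the reduction in Step 1 does not inflate $\gee$ beyond the claimed form) is the delicate bookkeeping part; the remaining pieces are essentially applications of \cref{thm:general_cover_number} and known matrix-deviation inequalities.
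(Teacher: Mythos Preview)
Your plan is sound and the reduction to a single set $W \subseteq \cB^M$ (rescaling the sub-threshold part by $1/\tau$ and normalizing the super-threshold part) is a legitimate --- and arguably cleaner --- alternative to what the paper does. The case analysis you sketch for deducing (i)--(iii) of \cref{def:sketch_f} from an additive bound $|\norm{\S\v w}-\norm{\v w}|\le \gee/4$ on $W$ is correct, and the Lipschitz transfer $\cN(W,\gh)\le \cN(\im(p)\cap [-t,t]^N,\,\const\gh\gt/L)$ combined with \cref{thm:general_cover_number} is exactly right.

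The paper organizes the argument differently. It fixes a threshold $r$ (to be chosen) and splits $f(U)$ into $F=f(U)\cap r\cB^M$ and $G=f(U)\setminus F$. On $G$ it does \emph{not} build a net by hand: it bounds $\go(\pi(G))$ by Dudley and \cref{thm:general_cover_number}, then feeds this directly into Dirksen's Gaussian-width sketching bound (for sub-Gaussian) or \cite[Theorem 9]{iwen2022fast} (for SORS), obtaining the multiplicative guarantee on $G$. The set $F$ is handled separately by an operator-norm bound: one chooses $r$ so that $\norm{\S}\le \gt/r$ with high probability, which forces $\norm{\S\y}\le\gt$ for $\y\in F$. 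For sub-Gaussian this gives $r^{-1}\asymp \gt^{-1}(\ga+\gee\sqrt{(M+\log(1/\gd))/n})$, and substituting that $r$ into $\go(\pi(G))$ produces exactly the argument inside the log in \eqref{eq:subG_f}. For SORS one takes $r=\gt/\sqrt{M}$ deterministically, which is where the $\sqrt{M}$ in $\gD$ comes from.

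Two consequences of this comparison. First, your manual net-and-extend for the sub-Gaussian case will not reproduce \eqref{eq:subG_f} verbatim: because the net scale is $\gh\asymp\gee/(1+\norm{\S}_2)$, the quantity $\gl/\gh$ picks up an extra factor $1/\gee$, so you end up with $n\log(\gl\ga/\gee+\ldots)$ rather than $n\log(\gl\ga+\ldots)$ --- i.e., an additional $n\log(1/\gee)$ in the bracket. Avoiding this is precisely why the paper invokes Dirksen's Gaussian-width bound directly instead of a single-scale net. Second, for SORS your route could in principle \emph{improve} on \eqref{eq:sors_f} (no $\sqrt{M}$ inside the log), but only if the cited set-sketch theorem gives the required guarantee for an arbitrary bounded $W\subseteq\cB^M$; the versions stated for cones would force you through $\pi(W)$, and $\pi(W_1)$ is not controlled since $W_1$ can contain points arbitrarily close to the origin. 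The paper sidesteps this by handling the small-norm part with an operator-norm argument rather than through the covering bound.
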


\begin{proof}
    For any $\norm{\ell}_{\lip} = \theta > 0$, we can stretch $\ell$ and $p$ appropriately so that $\norm{\ell}_{\lip} = 1$ and the coordinate functions of $p$ are bounded by $t\theta$. 
    Since the claimed bound only involves the product $t\norm{\ell}_{\lip}$, without loss of generality we prove the assertions assuming $\ell$ is 1 Lipschitz.
    
    Denote $f = \ell\circ p$ and $V = p(U)$.
    Denote $F = f(U)\inter r \cB^M$ for some $r > 0$ chosen later, and denote $G = f(U) \setminus F$.
    Note that $\pi\circ \ell$ is $r^{-1}$ Lipschitz on $\ell^{-1}(G)$, so $\diam(\pi(G)) \leq \diam((\pi \circ \ell)(\ell^{-1}(G) \cap V)) \leq r^{-1} \diam(V) \leq 2 r^{-1} t\sqrt{N}$. 
    Thus, using \cref{thm:general_cover_number} and the Dudley integral \eqref{eq:dudley_gauss}, the Gaussian width of $\pi(G)$ is bounded by
    \begin{align*}
        \go(\pi(G)) 
        &\leq 
        \const \int_0^{\diam(\pi(G))} \sqrt{\log \cN(\pi(G), \eps)}\,d\eps
        \leq
        \const \int_0^{\diam(\pi(G))} \sqrt{\log \cN(V, r\eps)}\,d\eps\\
        &\leq
        \const \sqrt{n} \cdot \left( 
            \diam(\pi(G)) \sqrt{\pi} + 
            \diam(\pi(G)) \log^{1/2}\left( 
                \frac{d N\cdot r^{-1}t}{\diam(\pi(G))} \right)
        \right)\\
        &\leq
        \const \sqrt{n} \log^{1/2}\left(\max\set{\const, d N r^{-1} t}\right).
    \end{align*}
    In the second line, we first applied \cref{thm:general_cover_number} to bound the covering number, which is legitimate since $r \eps \leq \diam(V)$. 
    Then we applied \cref{lem:bnd_sqrt_ln} to bound the integral.
    In the last line,
    we used $\diam(\pi(G)) \leq \min\set{2, 2r^{-1}t\sqrt{N}}$ and separated the $\log^{1/2}$ term into a sum of two $\log^{1/2}$ terms.
    A maximum is used in the last line to handle the cases where $d Nr^{-1}t < 1$.
    By \cite[Corollary 5.4]{dirksen2016dimensionality}\footnote{Results therein guarantee $\norm{\S \y}^2 = (1 \pm\gee)\norm{\y}^2$, which implies $\norm{\S \y} = (1\pm\gee)\norm{\y}$ for $\gee \in (0, 1)$.} and \cref{thm:general_cover_number}, we get that for $\S \in \subG(\ga)$ and any $\gd_{\ga,G} \in (0, 1)$, if
    \begin{equation} \label{eq:mag}
        m \geq \const \ga^2\eps^{-2}\Big(
            n \log\left(\max\set{dNr^{-1}t, \const}\right)
            +
            \log (1/\gd_{\ga,G})
        \Big)
        =: \const m_{\ga, G},
    \end{equation}
    then $\norm{\S \y} = (1 \pm \eps) \norm{\y}$ for all $\y \in G$ with probability at least $1 - \gd_{\ga,G}$.
    By \cite[Theorem 9]{iwen2022fast}, for $\S \in \sors(\gb)$, if
    \begin{equation} \label{eq:mbg}
        m \geq \const \gD_r (\log^2 \gD_r) \log(N/\gd_{\gb, G}) := m_{\gb, G},
    \end{equation}
    where $\gD_r = \gb^2\gee^{-2}n \log(\max\set{\const, dNr^{-1}t})\log\gd_{\gb,G}^{-1}$,
    then $\norm{\S \y} = (1 \pm \eps) \norm{\y}$ for all $\y \in G$ with probability at least $1 - \gd_{\gb,G}$ for all $\gd_{\gb,G} \in (0, 1)$.

    Next we handle the set $F$.
    Our goal is to increase the sketching dimension $m$ to control the operator norm of $\S$, so that with $r \ll \gt$, for all $\y \in F$, we have $\norm{\S \y} \leq \gt$ with high probability.

    First consider $\S \in \subG(\ga)$.
    Let $u > 0$ to be specified later, and we choose $r = \frac{1}{\ga u} \gt$.
    Then for all $\y \in F$, in order for $\norm{\S\y} > \gt$, we must have $\norm{\S} > \ga u$.
    Let $c_1, c_2$ be appropriate constants in \cref{lem:subG_norm}.
    By \cref{lem:subG_norm},
    for arbitrary $\gd_{\ga,F} \in (0, 1)$, 
    if we take $u$ such that
    \begin{equation}\label{eq:u_ineq}
        (c_1 u^2 - c_2)^{-1} (M + \log(1/\gd_{\ga, F})) \leq \ga^2\gee^{-2}n,
    \end{equation}
    then $\norm{\S} > \ga u$ happens with probability at most $\gd_{\ga, F}$ provided that $m \geq \ga^2\gee^{-2} n$.
    Note that \eqref{eq:u_ineq} is guaranteed if
    \begin{equation}\label{eq:u_bnd}
        u \geq c_3\left(1 + \gee\ga^{-1}\sqrt{(M + \log(1/\gd_{\ga, F})) / n}\right)
    \end{equation}
    for some $c_3 > 0$.
    The corresponding value of $r^{-1}$ is 
    \begin{equation} \label{eq:r_bnd}
        r^{-1} \geq c_3 \gt^{-1}\left(\ga + \gee\sqrt{(M + \log(1/\gd_{\ga, F})) / n}\right) \geq c_3\ga \gt^{-1}.
    \end{equation}
    Since entries of $\wh{\m S} = \sqrt{m}\m S$ has a unit variance, their sub-Gaussian norm $\ga$ is lower bounded by an absolute constant.
    In particular, by choosing $c_3$ sufficiently large, we can guarantee that $r < \gt$, and thus $F \sset (\gt\cdot \cB^N)$.
    Therefore, with $\gd_{\ga, F} = \gd_{\ga, G} = \gd/2$, the union bound shows that with $m = \const m_{\ga, G}$, in which $r$ is given by \eqref{eq:r_bnd}, we simultaneously have
    \begin{equation} \label{eq:sketch_condition}
        \begin{gathered}
            \norm{\S \y} = (1 \pm \eps/2) \norm{\y} ~\text{for}~ \y \in G \supseteq (\gt\cdot \cB^M)^c, ~\text{and}~\\
            \norm{\S\y} \leq \gt ~\text{for}~ \y \in F \sset (\gt\cdot \cB^M).
        \end{gathered}
    \end{equation}
    The conditions above thus shows that for all $\x \in U$, when $\norm{f(\x)} \leq \gt$,$\norm{\S f(\x)} \leq (1 + \eps) \gt$, and when $\norm{\S f(\x)} > \gt$, $\norm{\S f(\x)} = (1 \pm \eps) \norm{f(\x)}$.
    Finally, if $\norm{f(\x)} > (1 + \eps) \gt$, then $\norm{\S f(\x)} > (1-\eps/2)(1 + \eps) \gt > \gt$ since $\eps < 1$.
    The contrapositive completes the proof that $\S$ is a $(\gee, \gd, \gt)$ sketch of $f$.
    The dimension bound \eqref{eq:subG_f} is a simplification of $m_{\ga, G}$ using the fact that $\log(\gl \ga) = \gO(1)$ with proper choice of lower bounding constant $\const$ in the definition of $\gl$.

    Next we deal with $\S \in \sors(\gb)$.
    We choose $r = \gt/\sqrt{M}$.
    Note that $\norm{\S} \leq \sqrt{M}$. 
    This guarantees for all $\y \in F$, $\norm{\S \y} \leq \gt$.
    Plugging this $r$ into \eqref{eq:mbg} and setting $\gd_{\gb, G} = \gd$, we see that condition \eqref{eq:sketch_condition} hold with probability at least $1-\gd$ when $m$ satisfies the bound in \eqref{eq:sors_f}.
    Hence, $\S$ is a $(\gee, \gd, \gt)$ sketch of $f$ as claimed.
\end{proof}

The bounds in \cref{thm:sketch_opt} can be greatly simplified if in fact we have a polynomial optimization problem ($\ell = \Id$), as shown next.
    In particular, we no longer have any $M$ dependence in the sketching dimension nor any $t$ dependence from the coordinate functions of $p$.
    
\begin{corollary}\label{cor:polyopt}
    Let $p: \R^n \rightarrow \R^N$ be a polynomial map with coordinate functions of degree at most $d$.
    Denote $N' = \min\set{N, n^d}$.
    Then for any $\gee,\gd\in(0, 1)$, $\m S \in \R^{m \times M}$ is a $(\gee, \gd, 0)$ sketch of $p$ if
    \begin{enumi}
        \item $\S \in \subG(\ga)$ and
            \begin{equation}\label{eq:subG_p}
                m \geq \const \ga^2 \gee^{-2} \Big(n \cdot \log (ndN') + \log(1/\gd)\Big); \text{ or }
            \end{equation} 
        \item $\S \in \sors(\gb)$ and
            \begin{equation}\label{eq:sors_p}
                m \geq \const \gD \cdot \log^2\gD \cdot \log(N'/\gd)
            \end{equation}
            where $\gD = \beta^2\eps^{-2} n\cdot \log(ndN') \log(1/\gd)$.
    \end{enumi}
\end{corollary}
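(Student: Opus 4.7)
The plan is to specialize the argument of \cref{thm:sketch_opt} to the simpler setting $\ell=\Id,\ \tau=0$, exploiting two simplifications. First, when $\tau=0$, \cref{def:sketch_f} collapses to the single requirement $\norm{\S p(\x)}=(1\pm\eps)\norm{p(\x)}$ for every $\x$, which by homogeneity is equivalent to $\S$ acting as an $\eps$-isometry on the projected image $T:=\pi(\im(p))\subset \cS^{N-1}$. In particular, because there is no ``small magnitude'' region $F$ that must be sketched to approximately zero, the operator-norm control of $\S$ (which is what introduced the $t$ and $M$ dependence in the proof of \cref{thm:sketch_opt}) is no longer needed; only $\go(T)$ must be bounded.

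To bound the covering number of $T$, I would combine \cref{lem:polynom_regularity} (which gives that $T$ is $((4d+1)^{n+1},n)$-regular) with \cref{thm:general_cover_number} applied in two different ambient spaces. Directly in $\R^N$ this yields $\log\cN(T,\eps)\leq n\log(1/\eps)+\const n\log(Nd)$. To obtain the $n^d$ alternative, write $p(\x)=\m A\phi(\x)$, where $\phi:\R^n\to\R^{N''}$ lists all monomials in $\x$ of degree $\leq d$ and $\m A\in\R^{N\times N''}$ with $N''=\binom{n+d}{n}\leq (n+1)^d$. Then $\im(p)$ sits inside the column span of $\m A$, a subspace of $\R^N$ of dimension at most $N''$; since Euclidean covering numbers are intrinsic to the ambient subspace, applying \cref{thm:general_cover_number} there gives $\log\cN(T,\eps)\leq n\log(1/\eps)+\const n\log(n^d d)$. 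Taking the minimum yields
\begin{equation*}
\log\cN(T,\eps)\ \leq\ n\log(1/\eps)+\const n\log(ndN'),\qquad N'=\min\{N,n^d\}.
\end{equation*}

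The remaining steps mirror the proof of \cref{thm:sketch_opt}: feed this covering estimate into Dudley's integral \eqref{eq:dudley_gauss} together with \cref{lem:bnd_sqrt_ln} to obtain $\go(T)\leq \const\sqrt{n\log(ndN')}$, and then invoke the near-isometry theorems of \cite[Corollary~5.4]{dirksen2016dimensionality} and \cite[Theorem~9]{iwen2022fast} respectively. Both results express $m$ in terms of $\go(T)^2$ plus an additive $\log(1/\delta)$ term (and for \cite{iwen2022fast}, an outer $\log(N'/\delta)$ factor), so substituting the width bound produces exactly \eqref{eq:subG_p} and \eqref{eq:sors_p}.

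The main subtlety is the replacement of $N$ by $N'=\min\{N,n^d\}$. It is not obtained purely from \cref{thm:general_cover_number} (whose ambient $N$ is fixed) but from the Veronese factorization above, which exhibits $\im(p)$ inside a subspace of dimension $O(n^d)$; taking the better of the two covering bounds is what yields $N'$. For the SORS factor $\log(N'/\delta)$ in \eqref{eq:sors_p}, the same reduction applies, either by implementing the SORS theorem after an orthogonal change of coordinates onto the span of $\im(p)$, or by invoking a subspace-oblivious variant of the FJLT bound.
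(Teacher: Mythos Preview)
Your proposal is correct and follows essentially the same route as the paper's own proof: reduce to bounding $\omega(\pi(\im(p)))$, observe that $\im(p)$ lies in the span of the Veronese image (a subspace of dimension $\binom{n+d}{d}=\cO(N')$), apply \cref{thm:general_cover_number} together with \cref{lem:polynom_regularity} inside that subspace, evaluate Dudley's integral, and invoke \cite[Corollary~5.4]{dirksen2016dimensionality} and \cite[Theorem~9]{iwen2022fast}. Your write-up is in fact more explicit than the paper's on two points the paper glosses over: why $\tau=0$ eliminates the operator-norm (``$F$-region'') step from the proof of \cref{thm:sketch_opt}, and how the replacement of $N$ by $N'$ in the outer $\log(N'/\delta)$ factor of \eqref{eq:sors_p} is justified via the subspace reduction.
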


\begin{proof}
    Following the computation of Gaussian width in the previous proof, this time $V = \pi(\im(p))$ and it is contained in a subspace of dimension $\min\set{N, \binom{n + d}{d}} = \cO(N')$. Then
    \begin{equation*}
        \go(\pi(\im(p))) \leq 2 \int_0^2 \sqrt{\log\cN(V, \gee)}\,d\gee \leq \const \sqrt{n\log(ndN')},
    \end{equation*}
    where in the last step we used \cref{lem:polynom_regularity}.
    Thus, using \cite[Corollary 5.4]{dirksen2016dimensionality}, we get the desired result.
    The bound for $\S \in \sors(\gb)$ follows directly from \cite[Theorem 9]{iwen2022fast} using the bound on the Gaussian width above.
\end{proof}

\begin{remark}
    We emphasize that our sketching bounds based on \cref{thm:general_cover_number} are (essentially) \textup{oblivious} to $\ell\circ p$.
    They only require a bound on the Lipschitz constant of $\ell$, a degree bound on the coordinate functions of $p$, and a bound on the coordinate functions on $U$.
    In comparison, there are existing works \cite{federer1959curvature,iwen2022fast}  that in principle could be used to give global guarantees \eqref{eq:sf_approx_f} by a similar argument.  However, these require nontrivial geometric descriptors of the image $p(U)$, such as the reach and the volume, which often cannot be estimated efficiently.
\end{remark}

\subsection{Generalization Error of Neural Networks} \label{sec:gen_err}

Covering numbers are also relevant to learning theory. For a thorough and relatively reader-friendly introduction to the subject, see e.g., \cite{mohri2018foundations}.
The generalization error of a classification algorithm measures the gap between accuracy on the training set and on unseen test data.
Suppose the data feature $\v x$ and label $y$ follow a distribution $(\x, y) \sim \cD$.
For a classifier $f$ and loss function $\l$, the expected risk of $f$ is defined as
\begin{equation*}
    R(f; \l) = \E_{(\v x, y) \sim \cD}\l(f(\v x), y).
\end{equation*}
When provided an i.i.d. sample $S = \set{(\v x_i, y_i)}_{i = 1}^n$ from $\cD$, the empirical risk is
\begin{equation*}
    R_S(f;\l) = \frac{1}{n}\sum_{i = 1}^n \l(f(\v x_i), y_i). 
\end{equation*}
We wish to control the generalization error 
\begin{equation}
    \gerr_S(\cF;\ell) := \sup_{f\in \cF} \, R(f; \ell) - R_S(f; \ell), 
\end{equation}
so we can expect good performance on unseen test data.
For more details, see
\cite{anthony1999neural,vershynin2018high}.

There is a long history in finding optimal bounds for generalization errors.
One of the best studied setups is the binary classification case, where $f \in \cF$ has a range $\set{-1, 1}$, $y \in \set{-1, 1}$, and the loss function is $\ell_{\text{bin}}(f(\x), y) = \ind{f(\x) \neq y}$.
For such a setup, it is known that (see e.g., \cite[Theorem 8.4.4]{vershynin2018high})
\begin{equation*}
    \gerr_S(\cF;\ell) \leq \const \sqrt{\vc(\cF)/n}
\end{equation*}
with high probability, where $\vc(\cF)$ is the VC dimension of $\cF$ and $n$ is the sample size.
Hence, many bounds for VC dimensions of many hypothesis classes have been proposed, such as  deep neural networks with piecewise polynomial activations \cite[Theorem 8.8]{anthony1999neural} and \cite{bartlett2019nearly}.

Instead we consider another popular setup, common in multi-label classification, where the output of $f$ and the loss function $\ell$ are continuous.
A key tool to control the generalization error in the continuous setup is the \it{Rademacher complexity}: 
\begin{equation}
    \label{eq:rad_complexity}
    \fR_S(\cF;\l) = \E_{\v \gs} \sup_{f \in \cF} \frac{1}{n} \sum_{i = 1}^n \gs_i \cdot \l(f(\v x_i), y_i),
\end{equation}
where entries $\gs_i$ of $\v \gs$ are independent Rademacher random variables ($\sigma_i = \pm1$ with equal probability).
By \cite[Theorem 3.3]{mohri2018foundations}, if $\l$ has a range in $[0, 1]$, then with probability at least $1-\gd$ over the sample $S$, the generalization error of $\cF$ can be bounded by
\begin{equation} \label{eq:gen_err}
    \gerr_S(\cF;\ell) \leq 2\fR_S(\cF;\l) + 3\sqrt{\frac{\log(2/\gd)}{2n}}.
\end{equation}
The Rademacher complexity \eqref{eq:rad_complexity} is analogous to the Gaussian width \eqref{eq:def_gauss_width}.
Indeed, it can also be bounded through the Dudley integral (see \cref{rmk:genwidth}):
\begin{equation}\label{eq:dudley_rad_complexity}
    \fR_S(\cF;\l) \leq \const n^{-1}\int_0^{\sqrt{n}}\sqrt{\log \cN(\l \circ \cF(\m X), \gee)}\, d\gee, 
\end{equation}
where $\m X = (\x_1|\ldots|\x_n) \sset \R^{d \times n}$, $f(\m X) = (f(\x_1)|\ldots|f(\x_n)) \in \R^{D \times n}$ and
$$\mathcal{F}(\X) = \set{f(\m X): f\in \cF} \subseteq \R^{D \times n}.$$
In \eqref{eq:dudley_rad_complexity}, we are also assuming that $\ell$ takes values in $[0,1]$.

In the sequel, we derive generalization bounds for deep neural networks by controlling this covering number by \cref{thm:general_cover_number}.
We cover fully connected or convolutional layers, with rational or ReLU activations.
For rational activations analyzed in \cref{sec:gen_err_rat}, we show the image $\cF(\X)$ is a regular set.
Then for ReLU activations analyzed in \cref{sec:gen_err_common}, we approximate $\cF(\X)$ by the image of a rational network and apply the results in \cref{sec:gen_err_rat}.

\subsubsection{Generalization Error of Rational Neural Networks}\label{sec:gen_err_rat}

Here we bound the generalization error of rational neural networks \cite{boulle2020rational}.  
These are networks having rational functions (i.e., ratio of polynomials) as their activations.
They can approximate ReLU networks \cite{molina2019pad, boulle2020rational, telgarsky2017neural}.  Beyond that,  rational activations are naturally trainable since they are parameterized by coefficients.  As such, it has been observed that rational networks can be performant and accelerate training process in data-driven applications compared to other activations \cite{boulle2020rational,huang2023fast}. 
We recall the definition of a rational function.

\begin{definition}
    \label{def:rational}
    We say that a function 
    $\gr$ from an open dense subset of $\R^d$ to $\mathbb{R}$ is a \textup{rational function} if $\gr(\x) = p(\x)/q(\x)$ for some polynomials $p$ and $q$.
    If $\deg(p) \leq \ga$ and $\deg(q) \leq \gb$, we denote $\gr \in R^\ga_\gb$.
    We say that a map $f$ from an open dense subset of $\R^d$ to $\R^D$ is in $R^\ga_\gb$ if all coordinate functions of $f$ are in $R^\ga_\gb$.
\end{definition}

The next definitions formalize the two classes of networks to be analyzed. 

\begin{definition}
    [rational fully connected neural network]\label{def:ratnn}
    For $i = 1,\ldots,L$,
    let $\gr_i \in R^{\ga_i}_{\gb_i}$ be rational activation functions; let $\m A_i \in \R^{d_i \times d_{i-1}}$ be weight matrices; and let $\v b_i \in \R^{d_i}$ be bias vectors.
    The \textup{rational fully connected neural network} is the composed map from $\R^{d_0}$ to $\R^{d_L}$:
    \begin{equation}
        \label{eq:def_ratnn}
        f(\x) = \gr_{L}\Big[\A_{L} \big(\ldots \gr_1(\A_1 \x + \b_1)\ldots\big) + \b_L\Big].
    \end{equation}
    Denote $\v d = (d_0,\ldots,d_L)$, $\v \ga = (\ga_1,\ldots,\ga_L)$, and $\v \gb = (\gb_1,\ldots,\gb_L)$.
    We denote this hypothesis class as $\ratnn(L, \v \ga, \v\gb, \v d)$.
    Denote the function of the first $i$ layers as $f_i = \gr_i\circ \A_i\circ\ldots$.
\end{definition}

\begin{definition}
    [rational convolutional neural network]\label{def:ratcnn}
    For $i = 1,\ldots,L$,
    let $\gr_i \in R^{\ga_i}_{\gb_i}$ be rational activation functions; let $\t K_i \in \R^{c_i \times c_{i-1} \times k\times k}$ be convolution kernels\footnote{Here $d_i$ represents the total size of the ``image" with $c_i$ channels, and $\t K_i$ possibly involves strides and zero or constant paddings.} that map input in $\R^{d_{i-1}}$ to $\R^{d_i}$, denoted by $\v x \mapsto \t K_i \x$; and 
    let $\v b_i \in \R^{d_i}$ be bias vectors.
    The \textup{rational convolutional neural network} is the composed map from $\R^{d_0}$ to $\R^{d_L}$:
    \begin{equation}
        \label{eq:def_ratcnn}
        f(\x) = \gr_{L}\Big[\t K_{L} \big(\ldots \gr_1(\t K_1 \x + \b_1)\ldots\big) + \b_L\Big].
    \end{equation}
    Denote $\v d = (d_0,\ldots,d_L)$, $\v c = (c_0,\ldots,c_L)$, $\v \ga = (\ga_1,\ldots,\ga_L)$, and $\v \gb = (\gb_1,\ldots,\gb_L)$.
    We denote this hypothesis class as $\ratcnn(L, \v \ga, \v\gb, \v c, \v d)$.
    Denote the function of the first $i$ layers as $f_i = \gr_i\circ \t K_i\circ\ldots$.
\end{definition}

Our next theorem upper bounds the covering number of $\cF(\m X)$ for four hypothesis spaces within $\ratnn(L, s\ind{}, s\ind{}, \v d)$ and $\ratcnn(L, s\ind{}, s\ind{}, \v c, \v d)$.  
We denote these hypothesis spaces as $\cF_{\ratnn}$ and $\cF_{\ratcnn}$ with \textit{fixed} activations, as well as the variants $\cF^T_{\ratnn}$ and $\cF^T_{\ratcnn}$ with \textit{trainable} activations. 

\begin{theorem} \label{thm:covering_ratnn}
    Let the hypothesis classes be defined as in the above paragraph.
    Denote $N$ the number of parameters in the network and 
    suppose $\im(\gr_L) \in [-t, t]$.
    Then for $\cF = \cF_{\ratnn}$ or $\cF_{\ratnn}^T$ and any $\gee \in (0, \diam(\cF(\m X))]$,
    \begin{equation}\label{eq:cover_ratnn}
        \sup_{\m X \in \R^{d_0 \times n}} \cN(\cF(\m X), \gee) \leq 
        N\log(t/\eps) + (N + 1)\log\left(\const N (nd_L)^{5/2} s^L \prod\nolimits_{j = 1}^{L-1}d_j \right),
    \end{equation}
    where $\const$ can possibly be different for $\cF_{\ratnn}$ and $\cF_{\ratnn}^T$.
    For $\cF = \cF_{\ratcnn}$ or $\cF_{\ratcnn}^T$ and any $\gee \in (0, \diam(\cF(\m X))]$,
    \begin{equation}\label{eq:cover_ratcnn}
        \sup_{\m X \in \R^{d_0 \times n}} \cN(\cF(\m X), \gee) \leq 
        N \log(t/\gee) + (N + 1)\log\left(\const N (nd_L)^{5/2} s^L k^{2(L-1)} \prod\nolimits_{j = 1}^{L-1}c_{j} \right),
    \end{equation}
    where $\const$ can possibly be different for $\cF_{\ratcnn}$ and $\cF_{\ratcnn}^T$.
\end{theorem}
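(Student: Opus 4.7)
The plan is to realize $\cF(\X)$ as the image of a rational map $\Phi:\R^N\to\R^{d_L n}$ obtained by stacking the outputs $(f(\x_1),\ldots,f(\x_n))$ as functions of the $N$ network parameters, and then apply \cref{lem:regrat} followed by \cref{thm:general_cover_number}. Because $\im(\gr_L)\subseteq[-t,t]$, we have $\cF(\X)\subseteq[-t,t]^{d_L n}$, so the scale parameter in \cref{thm:general_cover_number} satisfies $[\cF(\X)]_{d_L n}\leq t$. The key quantitative input is a sharp bound on the maximum degree $e_L$ of the numerator and denominator polynomials of the coordinate functions of $\Phi$, viewed as rational functions in all $N$ parameters simultaneously.

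To bound $e_L$ in the fully connected setting, I will induct on the layer index $i$. Write each scalar entry $(f_i)_k$ of the $i$th layer's output as $P_{i,k}/Q_{i,k}$ in the parameters, and set $e_i=\max_k\max(\deg P_{i,k},\deg Q_{i,k})$. The crucial observation is that the $d_i$ entries of $f_i$ generically do \emph{not} share a common denominator, since each entry arises from applying $\gr_i$ to a different affine combination of the preceding layer. Consequently, forming $(\A_{i+1}f_i+\v b_{i+1})_k$ requires clearing a common denominator over up to $d_i$ terms, yielding a rational expression whose numerator has degree at most $1+d_i e_i$ and denominator has degree at most $d_i e_i$. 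Applying $\gr_{i+1}\in R^s_s$ and clearing denominators once more multiplies the combined degree by $s$, so $e_{i+1}\leq s(1+d_i e_i)$. Starting from $e_1=s$, unrolling this recursion yields $e_L\leq \const\, s^L\prod_{j=1}^{L-1}d_j$. Then \cref{lem:regrat} shows that $\cF(\X)$ is $(K,N)$ regular with $\log K\leq (N+1)\log\bigl(\const\, d_L n\, s^L\prod_{j=1}^{L-1}d_j\bigr)$, and substituting into \cref{thm:general_cover_number} with ambient dimension $d_L n$ and scale $t$ produces \eqref{eq:cover_ratnn} after combining logarithms.

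The convolutional case follows the same template with one modification in the recursion. A convolutional layer combines only $c_i k^2$ entries of $f_i$ per output position (rather than all $d_i$), so the common-denominator step now contributes a factor $c_i k^2$; the recursion becomes $e_{i+1}\leq s(1+c_i k^2 e_i)$, with solution $e_L\leq \const\, s^L k^{2(L-1)}\prod_{j=1}^{L-1}c_j$. Applying \cref{lem:regrat} and \cref{thm:general_cover_number} as above produces \eqref{eq:cover_ratcnn}. For the trainable-activation variants $\cF^T_{\ratnn}$ and $\cF^T_{\ratcnn}$, the coefficients of each $\gr_i$ are promoted to parameters, which enlarges $N$ by $\cO(Ls)$ and introduces an additional $+1$ in the recursion (coming from the degree-one dependence on the new activation coefficients); this modifies the recursion to $e_{i+1}\leq 1+s(1+d_i e_i)$, which changes only absolute constants and is absorbed into $\const$ in the final bound.

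The main obstacle is the degree recursion. A careless analysis that treats $f_i$ as a single rational function with a shared denominator of degree $e_i$ would give $e_{i+1}\leq s(1+e_i)$ and hence the misleading bound $e_L\lesssim s^L$, independent of the intermediate widths. Recognizing that the $d_i$ (respectively $c_i k^2$) scalar entries of $f_i$ have algebraically independent denominators is precisely what generates the product $\prod_{j=1}^{L-1}d_j$ (respectively $k^{2(L-1)}\prod_{j=1}^{L-1}c_j$) and matches the dependence on widths and kernel size in the stated bounds. Care must also be taken that the asymmetry between the growth rates of numerator and denominator in the intermediate steps does not break the inductive hypothesis, which is handled uniformly by tracking the maximum of the two in $e_i$.
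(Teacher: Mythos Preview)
Your proposal is correct and follows essentially the same route as the paper's proof. The paper likewise views $\cF(\X)=\im(\X^*)$ as the image of a rational map in the $N$ parameters, invokes \cref{lem:regrat} to obtain $((2nd_L d_f+1)^{N+1},N)$ regularity, and applies \cref{thm:general_cover_number}; the degree recursion you derive, $e_{i+1}\le s(1+d_i e_i)$ (respectively $e_{i+1}\le s(1+c_ik^2 e_i)$, and $+1$ in the trainable case), coincides exactly with the paper's \cref{lem:degree_single_layer} and \cref{lem:degree_ratnn}, and combining the $N\log(\const N(nd_L)^{3/2})$ term from \cref{thm:general_cover_number} with the $(N+1)\log(\const nd_L d_f)$ term from the regularity constant is precisely how the $(nd_L)^{5/2}$ factor arises.
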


 \cref{thm:covering_ratnn} relies on \cref{thm:general_cover_number}. 
Indeed, below we show that $\cF(\m X)$ is a $(K, N)$ regular set for bounded $K$.
Let $\gL$ be a tuple of all involved parameters. 
It consists of the entries of $\m A_i, \t K_i, \v b_i$, and in the case the activations are trainable, also the coefficients in each $\gr_i$.
Obviously, each choice of $\gL$ defines a network $f^\gL \in \cF$.
Define the evaluation map $\m X^*(\gL) := f^\gL(\m X) \in \R^{d_L\times n}$ and similarly $\m X^*_{i}(\gL) := f_i^{\gL}(\m X)$ for the first $i$ layers.
Note that $\m X^*$ is a rational map in $\gL$. 
By \cref{lem:polynom_regularity}, we just need a degree bound on the coordinate functions of $\m X^*$ to control $K$.

\begin{proof}[proof of \cref{thm:covering_ratnn}]
    First note that $\m X^*$ is well defined on all $\gL$ such that $\im(\gr_L) \in [-t, t]$.
    Note that $\cF(\m X) = \im(\m X^*)$ is a regular set.
    Indeed, let $\m X^*(\gL)_j = \frac{p_j(\gL)}{q_j(\gL)}$, $j = 1,\ldots,nd_L$.
    By \cref{lem:degree_ratnn}, each $p_j$ and $q_j$ has a degree at most $d_f$, which is computed therein.
    By \cref{lem:polynom_regularity}, $\cF(\m X)$ is $((2nd_L d_f + 1)^{N+1}, N)$ regular. 
    Using \cref{thm:general_cover_number}, we have 
    \begin{equation*}
        \cN(\cF(\m X), \gee)
        \leq
        N\log\left(\frac{\const Nt (nd_L)^{3/2}} {\gee}\right)
        +
        (N+1) \log(\const nd_L d_f).
    \end{equation*}
    Plugging in the bound on $d_f$ in \cref{lem:degree_ratnn} for corresponding $\cF$ gives the claimed result.
\end{proof}

Combining \cref{lem:polynom_regularity} and \eqref{eq:gen_err}, we derive the following generalization error bounds on the four hypothesis spaces with a Lipschitz loss function.

\begin{corollary}
    \label{cor:gen_err_rational}
    Let the hypothesis spaces be the same as in \cref{thm:covering_ratnn}. 
    Suppose the loss function $\l$ is Lipschitz with constant $\norm{\l}_{\lip}$\footnote{We mean $\l$ is Lipschitz with constant $\norm{\ell}_{\lip}$ in $\x$ for each $y$.} and range $[0, H]$, and $\im(\gr_L) \in[-t, t]$. 
    Denote $N$ the number of parameters and 
    $\ga = \min\set{H, t\norm{\ell}_{\lip}\sqrt{d_L}}$.
    Then 
    \begin{equation}\label{eq:nice-one}
        \sup_{\m X \in \R^{\din\times p}}\fR_S(\cF;\l) \leq \const \ga\sqrt{\frac{N}{n}}\left(\sqrt{\pi} + \sqrt{\log\frac{t W\norm{\l}_{\lip}}{\ga\sqrt{n}}}\right),
    \end{equation}
    where $W$ is the number in the second $\log(\cdot)$ term in \eqref{eq:cover_ratnn} and \eqref{eq:cover_ratcnn} for the corresponding class $\cF$.
    
    In particular, write $D = \max_{i\leq L-1} d_i$ for ${\ratnn}$ and its trainable variant, and $D = k^2\max_{i\leq L-1} c_i$ for ${\ratcnn}$ and its trainable variant.
    Suppose ($\ast$): $t$, $H$, and $\norm{\l}_{\lip}$ are $\cO(1)$ and $\log d_L$, $\log n$ are $\cO(L \log(sD))$. 
    Then
    \begin{equation} \label{eq:bnd_ratnn_with_assumption}
        \sup_{\m X \in \R^{d_0\times n}}\fR_S(\cF;\l) = \cO\left(n^{-1/2} \cdot \sqrt{N L \log(sD)}\right).
    \end{equation}
    
    If in fact the classification loss is $\l(\v x, \v y) = \crsent(\softmax(f(\v x)), \v y)$, where $y_i = 1$ for data in class $i$ and $y_i = 0$ otherwise, then under ($\ast$):
    \begin{equation}
        \sup_{\m X \in \R^{d_0\times n}}\fR_S(\cF;\l) = \cO\left(n^{-1/2} \cdot (t + \log d_L) \sqrt{N L \log(sD)}\right).
    \end{equation}
    Equation \eqref{eq:gen_err} then gives a bound for the generalization error for all cases above.
\end{corollary}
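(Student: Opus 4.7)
The plan is to apply the Dudley entropy integral bound on Rademacher complexity (the general form analogous to \eqref{eq:dudley_rad_complexity}, with upper limit $\diam(\l\circ \cF(\X))$) to the composed set $\l \circ \cF(\X) \subseteq \R^n$, and then reduce its covering number to that of $\cF(\X)$ itself via the Lipschitz property of $\l$, so that \cref{thm:covering_ratnn} is directly applicable. First I would bound the diameter two ways: entrywise $\l \leq H$ gives $\diam(\l\circ \cF(\X)) \leq 2H\sqrt{n}$, while the Lipschitz property combined with $\cF(\X) \subseteq [-t, t]^{d_L \times n}$ gives $\diam(\l\circ \cF(\X)) \leq \norm{\l}_{\lip}\diam(\cF(\X)) \leq 2t\norm{\l}_{\lip}\sqrt{n d_L}$. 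Taking the smaller yields $\diam(\l\circ \cF(\X)) \leq 2\ga\sqrt{n}$. Since $\l(\cdot, y)$ is $\norm{\l}_{\lip}$-Lipschitz, the usual Lipschitz reduction gives $\cN(\l\circ \cF(\X), \gee) \leq \cN(\cF(\X), \gee/\norm{\l}_{\lip})$.

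Substituting $u = \gee/\norm{\l}_{\lip}$ in the Dudley integral and applying \cref{thm:covering_ratnn}, I would split the integrand using $\sqrt{a+b} \leq \sqrt{a}+\sqrt{b}$ into $\sqrt{N\log(t/u)}$ and $\sqrt{(N+1)\log(\const W)}$, where $W$ is the architecture-dependent factor appearing in \eqref{eq:cover_ratnn} or \eqref{eq:cover_ratcnn}. The first summand, integrated from $0$ to $2\ga\sqrt{n}/\norm{\l}_{\lip}$, produces a factor of $\sqrt{\pi} + \sqrt{\log(t\norm{\l}_{\lip}/(\ga\sqrt{n}))}$ through the standard estimate (the same one used via \cref{lem:bnd_sqrt_ln} in the proof of \cref{thm:sketch_opt}); the second summand integrates trivially to $\sqrt{\log(\const W)}$ times the same endpoint. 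Merging the two logarithmic contributions via $\sqrt{a}+\sqrt{b}\leq\sqrt{2(a+b)}$ and multiplying by $\norm{\l}_{\lip}/n$ from the substitution and the Dudley normalization yields \eqref{eq:nice-one}.

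For \eqref{eq:bnd_ratnn_with_assumption}, I would plug in the expression for $W$ from \cref{thm:covering_ratnn} (namely $\const N(nd_L)^{5/2} s^L \prod_j d_j$ or its convolutional analog) and observe that under assumption $(\ast)$ each factor contributes $\cO(L\log(sD))$ to $\log W$, so the total square root collapses to $\sqrt{NL\log(sD)}$. For the classification case, I need explicit bounds on $\norm{\l}_{\lip}$ and $H$ for $\l(\x, \v y) = \crsent(\softmax(f(\x)), \v y) = -f_y(\x) + \log \sum_i e^{f_i(\x)}$. Direct differentiation yields $\nabla_f \l = \softmax(f) - \v y$, whose Euclidean norm is at most $\sqrt{2}$, so $\norm{\l}_{\lip} = \cO(1)$; and since $\rho_L$ has range $[-t, t]$, we get $H \leq 2t + \log d_L$, hence $\ga = \cO(t + \log d_L)$. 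Substituting into \eqref{eq:bnd_ratnn_with_assumption} yields the final bound. The main obstacle I anticipate is essentially bookkeeping: carefully tracking which of the two diameter bounds dominates $\ga$ so that the derivation remains uniform across regimes, and ensuring the logarithm argument $tW\norm{\l}_{\lip}/(\ga\sqrt{n})$ in \eqref{eq:nice-one} is preserved after merging the two log contributions.
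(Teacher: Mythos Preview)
Your proposal is correct and follows essentially the same route as the paper: bound $\diam(\l\circ\cF(\X))$ by the smaller of $2H\sqrt{n}$ and $2t\norm{\l}_{\lip}\sqrt{nd_L}$ to produce the factor $\ga$, reduce the covering number via the Lipschitz property of $\l$, apply \cref{thm:covering_ratnn}, and integrate with \cref{lem:bnd_sqrt_ln}. The paper collapses the two covering-number terms into the single relaxed form $\const N\log(tW/\gee)$ before integrating, whereas you split via $\sqrt{a+b}\leq\sqrt{a}+\sqrt{b}$; the outcomes are the same.

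Two small remarks. First, for the cross-entropy case you must substitute back into \eqref{eq:nice-one}, not into \eqref{eq:bnd_ratnn_with_assumption}: the latter was derived under $H=\cO(1)$, which fails here since $H=2t+\log d_L$. Second, your Lipschitz bound $\|\nabla_f\l\|_2=\|\softmax(f)-\v y\|_2\leq\sqrt{2}$ is sharper than the paper's $\norm{\l}_{\lip}\leq\sqrt{d_L}$ obtained from $|\partial_i\l|\leq 1$; both lead to the same asymptotic because $\ga$ picks up the $H$ branch anyway, and the extra $\sqrt{d_L}$ inside the logarithm is absorbed by assumption $(\ast)$.
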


\begin{remark} \label{rmk:n_assumption}
    Assumption $(\ast)$ in \cref{cor:gen_err_rational} is a mild  assumption
    in applications involving nontrivial networks. 
    For example, suppose $L = 10$, $D = 10$, $s = 3$. Then order for $\log n > 2 L \log(sD)$ to hold one would require $n \geq 30^{20} \approx 3.5\times 10^{29}$ many data samples. 
    This is far beyond realistic sizes of  training sets.
\end{remark}

\begin{proof}[proof of \cref{cor:gen_err_rational}]
    Since $\l$ is Lipschitz on $[0, H]$, $\l/H$ is Lipschitz on $[0, 1]$ with norm $\norm{\l}_{\lip}/H$. 
    Using the Dudley integral \eqref{eq:dudley_rad_complexity},
    \begin{equation*}
        \fR_S(\cF;\ell) 
        \leq 
        \const \frac{H}{n}\int_0^{\sqrt{n}}\sqrt{\log \cN\left(\frac{\l(\cF(\m X), \v y)}{H},\,\gee\right)} \,d\gee
        \leq
        \const \frac{H}{n}\int_0^{\frac{\ga}{H}\sqrt{n}}\sqrt{\log \cN\left(\cF(\m X),\,\frac{H}{\norm{\l}_{\lip}}\gee\right)} \,d\gee.
    \end{equation*}
    Bounding the covering number in the last term by \cref{thm:covering_ratnn} with\footnote{The relaxed bound using the last term is sufficient for obtaining the desired big-$\cO$ scaling.}
    \begin{equation*}
        \log \cN\left(\cF(\m X),\gee\right) \leq \const N \log(t W /\gee),
    \end{equation*}
    and further bounding the integral using \cref{lem:bnd_sqrt_ln}, we get
    \begin{equation*}
        \fR_S(\cF; \l) \leq 
        \const \ga \sqrt{\frac{N}{n}}\left(\sqrt{\pi} + \sqrt{\log\frac{t W\norm{\l}_{\lip}}{\ga\sqrt{n}}}\right).
    \end{equation*}
    This completes the proof for the first part.
    In the other two parts, we have $\ga = \cO(1)$.
    The second part then follows directly by plugging in the value of $W$ in first part.
    For the third part, recall that for all $\v y$, $|\pd{\l}{x_i}| \leq 1$ and since $f(\v x)\in [-t, t]^{d_L}$, $\l(\x, \y) \in [0,~ \log (d_L e^t/e^{-t})] = [0,~ 2t + \log d_L]$.
    So $\norm{\l}_{\lip} \leq \sqrt{d_L}$ and $H \leq 2M + \log d_L$. Plugging this into the result from the first part we obtain the claim.
\end{proof}

\begin{remark} 
    How good are the generalization bounds in \cref{cor:gen_err_rational}?
    To our knowledge, there exists no prior result in our exact setting.
    However, compared to the bound in \cite[Theorem 8.4]{anthony1999neural} for hypotheses $f$ that require only $b$ arithmetic and comparison operations, setting $t = \cO(1)$, $s = \cO(1)$, and $d_i = D$ for $i\in[L]$, we improved that bound from $\cO(n^{-1/2}\cdot LD^2)$ to $\cO(n^{-1/2} \cdot LD\sqrt{\log D})$.
    The improvement is even more remarkable for convolutional networks, where $b$ the number of flops needed to evaluate $f$ is far greater than $N$.
    Although these bounds are derived under a different setup and are not directly comparable, we take this comparison as a certificate of the nontriviality of \cref{cor:gen_err_rational}.
\end{remark}

Note that this section did not require boundedness of the data $\m X$, nor boundedness of weight matrices $\m A$ or kernels $\t K$, nor Lipschitz properties of the activations. 
All we used is a bound of the final output, $\im(\gr_L)$. 
 That is necessary anyways for $\cF(\m X)$ to have a finite covering number.

\subsubsection{Generalization Error of ReLU Networks}\label{sec:gen_err_common}

Now we shift attention to ReLU-activated neural networks.
We only treat fully connected networks here, since the purpose is to demonstrate another quick application of \cref{thm:general_cover_number}.

The standard setup for generalization error analysis of  ReLU network is the following.
The network is fixed of shape $(L, \v d)$ where $\v d = (d_0,\ldots, d_L)$, with $d_0$ being the input dimension and $d_L$ being the output dimension.
The hypothesis class consists of networks with bounded weights and biases.
Here we take $\norm{(\m A_i | \v b_i)}_{\infty} \leq \go_i$.
The input data $\x$ is also assumed to be bounded.
We take $\norm{\x}_{\infty} \leq 1$.
This ensures the output of the network is a bounded set.
Finally, the loss function $\l$ is Lipschitz.
Denote $\v \go = (\go_1,\ldots,\go_L)$, and we denote the hypothesis class as $\cF = \relu(L, \v d, \v \go)$.
Our bound on the Rademacher complexity, and hence the generalization error, is the following.

\begin{theorem}[restate = generrReLU, name = generalization error or ReLU network]\label{thm:gen_err_common_full}
    Let $\cF = \relu(L, \v d, \v \go)$ with $d_i \geq 2$, and $\go_i \geq 2$.
    Suppose $\l$ is a Lipschitz loss function having range $[0, H]$. 
    Denote $N$ the number of parameters in the network, $\ga = \min\set{H, \,\norm{\ell}_{\lip}\sqrt{d_L} \prod_{i = 1}^L \go_i}$, and $\gb = \ga^{-1}\norm{\ell}_{\lip}\sqrt{d_L} \prod_{i = 1}^L \go_i$.
    Then
    \begin{multline*}
        \sup_{\m X \in \R^{d_0\times n},~\norm{\m X}_{\max} \leq 1}\fR_S(\cF;\l)
        \leq \\
        \const \ga \sqrt{N/n} \cdot
        \left(
            \log\gb 
            +
            \log n
            + 
            \sum_{i = 1}^L \log d_i
            +
            L \log_+\log_+\left(\gb\sqrt{n/d_L}\right)
        \right)^{1/2},
    \end{multline*}
    where $\log_+(x) = \max\set{0, \log(x)}$.
    In particular, if $\v d = d\ind{}$, $\v \go = \go\ind{}$, and suppose the loss function is such that $\norm{\l}_{\lip}$ and $H$ are $\cO(1)$ constants, and the data size satisfies $\log n = \cO(L \log (dL))$ (see \cref{rmk:n_assumption}), then the bound simplifies to
    \begin{equation} \label{eq:gerr_relu_lite}
        \sup_{\m X \in \R^{d\times n},~\norm{\m X}_{\max} \leq 1}\fR_S(\cF;\l)
        = \\
        \cO\left(
            n^{-1/2} \cdot dL \sqrt{\log (d L \go)} 
        \right).
    \end{equation}
\end{theorem}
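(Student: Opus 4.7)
The plan is to approximate the ReLU network by a rational one and invoke Theorem~\ref{thm:covering_ratnn}.

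First I would carry out coarse range and Lipschitz bookkeeping for networks in $\relu(L,\v d,\v \go)$. Since $\norm{\x}_\infty\le 1$ and $\norm{(\m A_i | \v b_i)}_\infty\le \go_i$, a short induction gives $\norm{\v y_i}_\infty\le B_i$ with $B_0=1$ and $B_i := \go_i(d_{i-1}B_{i-1}+1)$, so that every pre- and post-activation value at layer $i$ lies in $[-B_i,B_i]$ with $B_i\le \const\prod_{j\le i}\go_j(d_{j-1}+1)$. The subnetwork from layer $i$ onward has $\ell_2$-Lipschitz constant $\gL_i\le \const\prod_{j>i}\sqrt{d_{j-1}d_j}\,\go_j$ (using that $\relu$ is $1$-Lipschitz and $\norm{\m A_j}_2\le \sqrt{d_{j-1}d_j}\,\go_j$). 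The quantity $\gb$ in the statement is, up to these dimension factors, the worst-case $\ell_2$ norm of the output divided by $\ga$.

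Next I would replace each activation by a rational approximation. By Newman's theorem, for any $\eta\in(0,1)$ and any $B>0$ there is a rational $\tilde\gr$ of degree $s=\cO(\log^2(1/\eta))$ with $|\tilde\gr(x)-\relu(x)|\le \eta B$ uniformly on $[-B,B]$. Substituting such $\tilde\gr_i$ at layer $i$ (with $B=B_i$) produces a rational network $\tilde f\in\ratnn(L,s\bsone,s\bsone,\v d)$, and a direct error-propagation argument along the $\gL_i$'s yields $\norm{\tilde f(\x)-f(\x)}_2\le \eta G$ with $G := \sum_{i=1}^L \sqrt{d_i}\,B_i\,\gL_i\le \const\,\gb\sqrt{d_L}\prod_{i=1}^L d_i$. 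Choosing $\eta\asymp \eps/(G\sqrt n)$ ensures $\norm{\tilde f(\m X)-f(\m X)}_F\le \eps/2$, so any $\eps/2$-net of $\tilde{\cF}(\m X)$ pulls back to an $\eps$-net of $\cF(\m X)$. Theorem~\ref{thm:covering_ratnn} applied with $t=B_L$ then gives
\begin{equation*}
    \log \cN(\cF(\m X),\eps)\ \le\ \const\,N\Bigl[\log(B_L/\eps)+\log\bigl(N(nd_L)^{5/2}\prod\nolimits_{i=1}^L d_i\bigr)+L\log s\Bigr],
\end{equation*}
in which $\log s \le 2\log_+\log_+(G\sqrt n/\eps)$ is exactly the source of the $L\log_+\log_+$ term in the final bound.

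Finally I would estimate the Rademacher complexity via the Dudley integral \eqref{eq:dudley_rad_complexity}, as in the proof of Corollary~\ref{cor:gen_err_rational}: integrate from $0$ to $(\ga/H)\sqrt n$ (beyond which the relevant image is effectively a single point), split $\sqrt{a+b+c+d}\le\sqrt a+\sqrt b+\sqrt c+\sqrt d$, and bound each summand via Lemma~\ref{lem:bnd_sqrt_ln}. This produces the four contributions $\log\gb$, $\log n$, $\sum_{i=1}^L\log d_i$, and $L\log_+\log_+(\gb\sqrt{n/d_L})$ inside the square root, matching the theorem statement; substituting $\v d=d\bsone$, $\v\go=\go\bsone$ and $N\asymp Ld^2$ then collapses the result to \eqref{eq:gerr_relu_lite}. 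The main obstacle is the second step: ensuring that $s$ stays polylogarithmic in all problem parameters despite $L$-fold error amplification, so that $L\log s$ contributes only the $L\log_+\log_+$ term rather than a polynomial-in-$L$ term; this requires careful tracking of the $B_i$ and $\gL_i$ so that their product collapses into the single compact quantity $\gb$ that appears in the statement.
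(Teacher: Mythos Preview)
Your plan is essentially the paper's own: approximate $\relu$ at each layer by a rational function of polylogarithmic degree (the paper uses the same Newman-type result, \cref{lem:rat_approx_relu}), propagate the error through the layers (\cref{lem:propagation}), and then appeal to the rational-network bound (\cref{thm:covering_ratnn} / \cref{cor:gen_err_rational}) before finishing with Dudley. The only organizational difference is that the paper fixes one rational class $\widehat{\cF}$ (for a single accuracy $\eps$ chosen at the end) and splits at the level of Rademacher complexities, $\fR_S(\cF;\l)\le \norm{\l}_{\lip}\eps\prod_i(1+\go_i)+\fR_S(\widehat{\cF};\l)$, whereas you let the rational degree $s=s(\eps)$ vary with the covering radius and split at the level of covering numbers; both routes lead to the same $L\log_+\log_+$ contribution and the same final bound.
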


\begin{proof}[Proof sketch (see \cref{app:gen_err_common} for full details)]
    We use rational functions to approximate the ReLU function at each layer (\cref{lem:rat_approx_relu}).
    This is possible since the input to each layer is bounded.
    By controlling approximation errors at each layer, we obtain a rational network that approximates the original ReLU network (\cref{lem:propagation} and \cref{lem:degree_rat_approx}).
    We cover the image of the ReLU network by covering the image of this approximating rational map.
    Using our result in the rational network case, namely \eqref{eq:nice-one} in \cref{cor:gen_err_rational}, leads to a bound on Rademacher complexity of the ReLU network.
\end{proof}

Let us compare the simplified bound \eqref{eq:gerr_relu_lite} to existing literature.
The result in \cite{bartlett2019nearly} for VC dimension of networks with ReLU activations implies a generalization error bound of order $n^{-1/2} \cdot \sqrt{N L \log N}$.
Assuming $\go = \cO(1)$ for the binary classification problem in order for the output to be within $[-1, 1]$, our bound \eqref{eq:gerr_relu_lite} is also of order $n^{-1/2} \cdot \sqrt{N L \log N}$, matching the result in \cite{bartlett2019nearly} for the binary case.
Despite differences in the setups (binary vs. continuous), we interpret this comparison as confirming the quality of bounds in \cref{thm:gen_err_common_full}.

Other works have studied the same setup as ours.
See \cite{lin2019generalization} for an overview of some prior results.
Bounds obtained there were later improved in \cite{long2019generalization} by a different approach.  
To our knowledge, \cite{long2019generalization} represents the current state-of-the-art for ReLU networks.
Suppose $\log L = \cO(\log d)$ and $\go = \gO(d)$, which is the case if entries of $\m A$ in the hypothesis space are allowed to have a magnitude of 1.
Then the bound \eqref{eq:gerr_relu_lite} matches the one in \cite{long2019generalization}.

\begin{remark}
Intuitively, one might not expect our approach based on rational function approximations to produce a result comparable to the state-of-the-art for ReLU networks.
When we use a rational function in $R^s_s$ to approximate the ReLU function,
then the bound we obtain is controlling the worst case generalization error for $\cF_{\ratnn}$ over arbitrary fixed activations in $R_s^s$.
Such networks are expected to be more complicated than ReLU-activated networks of the same structure.
Nonetheless, our main result \cref{thm:general_cover_number} does lead to a good result here. 
\end{remark}

\section{Conclusion}

In this paper, we derived a covering number bound $\cN(V, \gee)$ for bounded regular sets $V$ (\cref{def:reg_set}), which include images of polynomial maps, real algebraic varieties, and more generally semialgebraic sets in Euclidean space.
The bound (\cref{thm:general_cover_number}) improves the best known prior general result in \cite{yomdin2004tame}.
As an immediate corollary, we bounded the volume of tubular neighborhoods of polynomial images (\cref{cor:tubvol_image}), as well as varieties and semialgebraic sets.
Our main result on the covering number (\cref{thm:general_cover_number}) is asymptotically tight as $\gee \rightarrow 0$, and we expect the dependencies on other quantities in the bound are also in their optimal form in general.
Compared to existing covering number bounds developed for smooth manifolds, ours for regular sets only requires an inspection of the number of variables and the degree of the involved polynomials, and does not require nontrivial information such as the volume or reach of $V$. 
Indeed, the paper's proof by slicing leverages {global} properties of algebraic varieties, rather than {local} properties of manifolds.

We applied \cref{thm:general_cover_number} to a number of computational domains.  
Firstly we bounded the covering number on tensors of low CP rank (\cref{thm:covering_low_cp_rank}).
That result is near optimal in the low CP rank regime ($r \leq \min_i n_i$), often of most interest in practical tensor methods.
Moreover, we applied \cref{thm:general_cover_number} to sketching images of polynomials (and other regular sets), applied it to nonlinear polynomial optimization problems, and bounding the generalization error of neural networks.
In polynomial optimization, for $\m S$ a sub-Gaussian random projection or subsampled orthogonal matrix with random signs (\cref{thm:sketch_opt}),
we derived a sketching dimension bound ensuring that $\S$ is an $(\gee, \gd, \gt)$ sketch operator (\cref{def:sketch_f}) of $\ell\circ p$ for a Lipschitz residual function $\ell$.
This bound is greatly simplified when $\ell$ is identity (\cref{cor:polyopt}).
For rational neural networks of depth $L$ and width $d$ (fully connected or convolutional) with possibly trainable activations (which can greatly reduce training time), we derived a bound on its generalization error that is independent of the norms of weight matrices and that of the input vector (\cref{thm:covering_ratnn}).
The bound is $\cO(dL/\sqrt{n})$ up to logarithmic factors.
We also tested our theory by deriving a generalization error bound on ReLU networks (\cref{thm:gen_err_common_full}) using the approximation properties of rational functions. 
Under reasonable assumptions, that result matches the best prior result to our knowledge \cite{long2019generalization}, and shows dependence on the choice of architecture of the network.
In general, we expect that several more useful applications of \cref{thm:general_cover_number} in applied and computational mathematics should be possible.

\hfill\newline 
\noindent
\textbf{Acknowledgements.} 
The authors are grateful to Saugata Basu and Nicolas Boumal for helpful pointers to the literature.
Y.Z. was supported in part by a Graduate Continuing Fellowship and a National Initiative for Modeling and Simulation (NIMS) graduate student fellowship at UT Austin. J.K. was supported in part by
NSF DMS 2309782, NSF DMS 2436499, NSF CISE-IIS 2312746, DE SC0025312, and start-up grants from the College of Natural Science and Oden Institute at UT Austin.

\printbibliography

\hfill\\
\appendix

\section{Proofs of Properties of Regular sets}\label{app:mainthm}

We begin by introducing necessary notations.
Let $A\in \overline{G}_N^k$ be parametrized by $A = (\x, P)$, where $\x \in \R^{N-k}$ is the affine vector and $P \cong \R^k$ the tangent space of $A$.
For each $\x$, denote $\nu_N^k = \mu_N^k(\x,\cdot)$ the $O(N)$ invariant measure on $G_N^k$ (the Grassmannian of $k$ dimensional \textit{linear} subspaces of $\R^N$), and for each $P$, denote $\gl_{N-k} = \mu_N^k(\cdot, P)$ the Lebesgue measure on $P^\perp$.

The following lemma shows an expected result. Let $k\leq m\leq n$, and $E \sset G_n^k$ be a $\nu_n^k$-null set.
Then for almost all $f \in G_n^m$ of higher dimension, almost all $k$ dimensional subspaces of $f$ are not an element of $E$.

\begin{lemma} \label{lem:gr_integral}
    Let $E\sset G_n^k$ be such that $\nu_n^k(E) = 0$.
    Then for each $m \geq k$, there is a $\nu_n^m$ null set $F$ in $G_n^m$, so that for all $f \notin F$, the set $E_f = \set{e \in G_n^k: e \sset f, e \in E}$ satisfies $\nu_m^k[f](E_f) = 0$ where $\nu_m^k[f]$ is the uniform measure of $G_m^k$ on $f$.
\end{lemma}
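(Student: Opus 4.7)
The plan is to establish a Fubini-type identity on the incidence variety
$$I := \set{(e, f) \in G_n^k \times G_n^m : e \sset f}.$$
The projections $\pi_1 : I \to G_n^k$ and $\pi_2 : I \to G_n^m$ are $O(n)$-equivariant surjections with compact homogeneous fibers: $\pi_2^{-1}(f)$ is identified with $G_m^k[f]$ carrying the uniform measure $\nu_m^k[f]$, while $\pi_1^{-1}(e)$ is identified with the Grassmannian of $(m-k)$-planes in $\R^n / e$, carrying its own uniform probability measure $\lambda_e$. Since $I$ itself is a compact $O(n)$-homogeneous space, there is a unique $O(n)$-invariant probability measure $\eta$ on $I$; by uniqueness of invariant measures on the fibers, $\eta$ disintegrates as
$$d\eta(e,f) \;=\; d\nu_m^k[f](e) \, d\nu_n^m(f) \;=\; d\lambda_e(f) \, d\nu_n^k(e).$$

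With this in hand, I would apply both disintegrations to the subset $\pi_1^{-1}(E) \sset I$. The second gives $\eta(\pi_1^{-1}(E)) = \int_{G_n^k} 1_E(e) \, d\nu_n^k(e) = \nu_n^k(E) = 0$, since each $\lambda_e$ is a probability measure. The first gives $\eta(\pi_1^{-1}(E)) = \int_{G_n^m} \nu_m^k[f](E_f) \, d\nu_n^m(f)$, because the $\pi_2$-fiber of $\pi_1^{-1}(E)$ over $f \in G_n^m$ is precisely the set $E_f$ from the lemma statement. Equating the two expressions, the nonnegative integrand $\nu_m^k[f](E_f)$ must vanish for $\nu_n^m$-almost every $f$, giving the desired null set $F$.

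The main technical hurdle is measurability bookkeeping: $\pi_1^{-1}(E)$ must be $\eta$-measurable and $f \mapsto \nu_m^k[f](E_f)$ must be $\nu_n^m$-measurable. For Borel $E$ both follow from smoothness of $\pi_1$ together with standard Fubini. For a general $\nu_n^k$-null $E$, I would use outer regularity of $\nu_n^k$ to enlarge $E$ to a Borel $\nu_n^k$-null cover, which is harmless since enlarging $E$ only enlarges the exceptional set $F$. The existence and disintegration of $\eta$ can either be imported from standard integral geometry references such as \cite{schneider2008stochastic, nicolaescu2020lectures} already used in this paper, or constructed directly by realizing $I$ as the homogeneous space $O(n)/(O(k) \times O(m-k) \times O(n-m))$ and pushing the normalized Haar measure forward through the two projections.
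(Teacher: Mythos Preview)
Your proposal is correct and is essentially the same Fubini/disintegration argument as the paper's proof, differing only in the choice of intermediate homogeneous space. The paper factors through the Stiefel manifold $\st_n^m \cong O(n)/O(n-m)$, projects to $G_n^k$ and $G_n^m$ via the span of the first $k$ columns and all $m$ columns respectively, and invokes the co-area formula for the Riemannian submersion $P_{n,m}$ together with an explicit identification of the fibers with $O(m)$. You instead work directly on the partial flag variety $I \cong O(n)/(O(k)\times O(m-k)\times O(n-m))$ and appeal to uniqueness of the $O(n)$-invariant probability measure to obtain the two disintegrations. Your route is arguably cleaner, since the flag variety is exactly the incidence correspondence the lemma is about and the double disintegration packages the co-area computation into a single uniqueness statement; the paper's route is more hands-on but avoids citing disintegration theorems. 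Your measurability caveat (pass to a Borel null cover of $E$) is the right way to dispose of that technicality, and the paper's proof implicitly relies on the same reduction.
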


\begin{proof}
    Let $\m U = (\v u_1|\ldots|\v u_{m})$ be an element in the Stiefel manifold $\st_n^m$. 
    Define two projection maps $P_{n, k}:\st_n^m\rightarrow G_n^k$ and $P_{n, m}:\st_n^m \rightarrow G_n^m$ by
    \begin{gather*}
        P_{n, k}(\m U) = \spn\set{\v u_1,\ldots,\v u_{k}},\\
        P_{n, m}(\m U) = \spn\set{\v u_1,\ldots,\v u_{m}}.
    \end{gather*}
    Let $\gr_n^m$ be the uniform measure on $\st_n^m$.
    Then the pushforward measure $(P_{n,k})_{\ast}\gr_n^m$ is the uniform measure $\nu_n^k$ up to a constant scalar.
    Denote $\nu_m$ the uniform measure on $O(m)$. 
    Since the map $P_{n, m}$ is a Riemannian submersion, by the co-area formula \cite{chavel2006riemannian},
    \begin{equation*}
        0 = \int \ind{E} d \nu_n^k = \const \int \ind{P_{n,k}^{-1}(E)} d\gr_n^m = \const \int_{G_n^m} d\nu_n^m(f) \int_{O(m)} \ind{P_{n,k}^{-1}(E) \inter P_{n, m}^{-1}(f)} d\nu_m.
    \end{equation*}
    Thus, there is a null set $F \sset G_n^m$ such that for all $f \notin F$, the second integral is 0.
    Fix any $f \notin F$ and $\m V_0 \in \st_n^m$ a basis of $f$.
    We isometrically identify the fiber $P_{n, m}^{-1}(f)$ with $O(m)$ through $\iota: P_{n, m}^{-1}(f) \ni \m U \mapsto \m V_0^\top \m U \in O(m)$.
    Define the map $\phi:O(m) \rightarrow G_m^k[f]$ by 
    \begin{equation*}
        \phi(\m H) = \range\left(\m V_0 \m H \begin{pmatrix}\m I_{k}\\\m 0\end{pmatrix}\right).
    \end{equation*}
    Note the pushforward measure $\phi_{\ast} \nu_m$ is the uniform measure $\nu_m^k[f]$ up to a constant scalar.
    Now, 
    \begin{equation*}
        0
        =
        \int_{O(m)} \iota(P_{n,k}^{-1}(E) \inter P_{n, m}^{-1}(f)) d\nu_m
        =
        \const \int \phi\circ \iota(P_{n,k}^{-1}(E) \inter P_{n, m}^{-1}(f)) d \nu_m^k[f]
        =
        \int \ind{E_f} d \nu_m^k[f].
    \end{equation*}
    The last step follows since $e\in E_f$ if and only if one can find a basis of $e$ being the first $k$ column of an element in $P_{n,k}^{-1}(E) \inter P_{n, m}^{-1}(f)$. 
    Thus, we have found the desired null set $F$.
\end{proof}

We now use the tool \cref{lem:gr_integral} to prove the lemmas in \cref{sec:proof}.

\measregset*

\begin{proof}
    For any $n' \geq m$, denote $E_{n'}$ the null set in $\mu_N^{N-n'}$ consists of all planes at which the conditions in \cref{def:reg_set} fails for $V$.
    By definition, 
    \begin{equation*}
        0
        =
        \mu_N^{N-n'}(E_{n'}) 
        = 
        \int d\nu_N^{N-n'}(P) \int \ind{(\v x, P) \in E_{n'}} d\gl_{n'}(\x).
    \end{equation*}
    Thus, for $\nu_N^{N-n'}$-almost all $P$, the set $I_P = \set{\x: (\x, P) \in E_{n'}}$ has $\gl_{n'}$ measure 0.
    Denote by $F$ the null set of $\nu_N^{N-n'}$ such that the above fail to hold.
    By \cref{lem:gr_integral}, there is a set $H \sset G_N^{N-m}$ of full measure, such that for $Q \in H$, the set $F_Q^{n'} := \{P\in G_N^{N-n'}: P\sset Q, P\in F\}$ has $\nu_{N-m}^{N-n'}[Q]$ measure 0.
    Finally, fix $Q\in H$,
    and for an affine vector $\y$ of $Q$,
    denote $J_{\y, P} = \{\v z \in Q/P: (\y + \v z) \in I_P\}$.
    For $P \notin F_Q^{n'}$, by the Fubini theorem,
    \begin{equation*}
        0 = \gl_{n'}(I_P) = \int d\gl_m(\v y) \int \ind{\v y + \v z \in I_P} d\gl_{n'-m}(\v z). 
    \end{equation*}
    Thus, for almost all $\v y$, set $J_{\v y, P}$ has a $\gl_{n' - m}$ measure 0 for all $P \notin F_Q^{n'}$.
    Denote this set of good $\v y$ vectors by $R_Q$.
    Denote $T = \set{(\v y, Q): Q \in H, \v y \in R_Q}$, and for each $L = (\v y, Q) \in T$, denote $S_L = \{(\v y + \v z, P): P \notin F_Q^{n'}, \v z \notin J_{\v y, P}\}$.
    We deduce from above that $\mu_N^{N-m}(T^c) = 0$ and $\mu_{N-m}^{N - n'}[L]((S_L)^c) = 0$.
    Since by definition, $S_L \inter E_{n'} = \eset$, all planes in $S_L$ are generic to $V$.
    This means for all $L' \in S_L$ of codimension $r > n - m$ in $L$, it has a codimension of $m + r > n$ in $\R^N$, so $L' \inter (L\inter V) = L'\inter V = \emptyset$.
    And for all $L' \in S_L$ of codimension $r > n - m$ in $L$, it has a codimension of $m + r \leq n$ in $R^N$, so $L'\inter (L \inter V) = L'\inter V$ has at most $K$ connected components.
    The assertion is thus proved.
\end{proof}

\kzeroregset*

\begin{proof}
    Let $V\sset \R^N$ be a $(K, 0)$ regular set.
    Towards a contradiction, assume there is a path connected component $A$ of $V$ such that $\diam(A) > 0$.
    For such $A$, there exist some $\gee,\gd > 0$ such that there is a set $E \sset G_{N}^1$ of measure $\gee$ so that the projection of $A$ onto any element $P\in E$ has a diameter of at least $\gd$.
    Denote the unit tangent vector of $P$ by $\hat{\v p}$.
    By continuity, for each $P\in E$, there is an interval $F_P\in \R$ of length at least $\gd$ such that $(u\hat{\v p}, P^\perp) \inter A \neq \emptyset$ for all $u \in F_P$. 
    The collection of such $(u\hat{\v p}, P^\perp)$ has a measure of at least $\gee\gd > 0$ in $\mu_{N}^{N-1}$.
    This violates the assumption that $V$ is $(K, 0)$ regular.
\end{proof}

\section{Additional Proofs in \cref{sec:regsets}} \label{app:regset}

Throughout this section, let $L = (\u, P)$ be an affine subspace of $\mathbb{R}^N$ we use to intersect with the set to be proved to be regular. 
We denote $m$ the codimension of $L$.
Let $\v e_1,\ldots,\v e_m$ be an orthonormal basis of $P^\perp$. 
For a subset $S$ of Euclidean space, let $CC(S)$ denote the number of path connected components of $S$ with respect to the Euclidean topology.

\regvariety*

\begin{proof}
    We will only prove (c) since the rest will be automatic once we prove \cref{lem:regsemi} and take $b$ therein to be respectively 0 and 1.

    For (c), consider $S = \{(\y, w) :  \| \y \|^2 w = 1, \, \y \in V, \, \pi(\y) \in L\}$ in $\R^N \times \R$.
    The equations for $S$ read
    \begin{gather*}
        \ip{\y}{\v e_i}^2 = \ip{\v u}{\v e_i}^2 \norm{\y}^2,\ \ \ i = 1,\ldots,m;\\
        f_j(\y) = 0,\ \ \ \text{for all } j;\\
        \norm{\y}^2 w = 1.
    \end{gather*}
    The polynomials all have degree at most $d' = \max\set{3, d}$.
    Thus, for all $d \geq 2$, $CC(S) \leq d' (2d'-1)^N \leq (2d+1)^{N+1}$. 
    For $m > n$, we have $S = \eset$ for Zariski generic $L$ again by \cite[Theorem~1.1]{dalbec1995introduction}.
    Since $\pi$ is continuous on $S$ with image $\pi(V) \cap L$,
    we conclude $\pi(V)$ is $((2d+1)^{N+1}, n)$ regular.  As in the proof of \cref{lem:polynom_regularity}, if $V$ is a cone then we can replace $n$ by $n-1$ in the second regularity parameter.
\end{proof}

\regrat*

\begin{proof}
    Suppose $f(\x) = (\frac{p_1(\x)}{q_1(\x)}, \ldots, \frac{p_N(\x)}{q_N(\x)})$ for polynomials $p_i$ and $q_i$  of degrees at most $d$ (where $f$ is defined).  
    Let $S = \{(\x, w) : w \prod_{i=1}^N q_i(\x) = 1, \, f(\x) \in L\}$ in $\R^n \times \R$.
    Then $S$ is the solution set to
    \begin{gather*}
            \ip{g(\x)}{\v e_i} = \ip{\v u}{\v e_i} \prod\nolimits_{i=1}^N q_i(\x) ,\ \ \ i = 1,\ldots,m;\\
            w \prod\nolimits_{i=1}^N q_{i}(\x) = 1,
        \end{gather*} 
    where $g(\x)$ is obtained by clearing denominators in $f(\x)$, that is, 
    \begin{equation*}
        g(\x) := \Big{(}p_1(\x) \prod_{i \neq 1} q_i(\x), \,\, \ldots \,\,, p_N(\x) \prod_{i \neq N} q_i(\x)\Big{)}.
    \end{equation*}
    Here the polynomial equations have  degree at most $dN + 1$.
    Thus POMT implies $CC(S) \leq (dN+1)(2dN+1)^{n} \leq (2dN+1)^{n+1}$. 
    Again, $S = \emptyset $ Zariski generically if $m > n$, because $\im(f)$ is contained in an algebraic variety of dimension at most $n$.
    Since $(\x, w) \mapsto f(\x)$ is continuous on $S$ with image $\im(f) \cap L$, it follows $\im(f)$ is $((2dN)^{n+1}, n)$ regular. 
\end{proof}

\regsemi*

\begin{proof}
    First, note that $U \inter L$ is a semialgebraic set with $b$ inequality constraints.
    As shown in the proof of \cite[Theorem 4.8]{yomdin2004tame}, to get an upper bound on $CC(U\inter L)$, we may assume that all $b$ inequalities are of form $q_\ell(\y) \leq 0$ by passing to a subset of $U\inter L$ with at least as many components.
    Now the set $S = \{(\y, \v w): q_\ell(\y) \leq 0~\forall \ell\in [b],~\y\in U\inter L\}$ in $\R^N\times \R^b$ is a semialgebraic set defined by
    \begin{gather*}
        \ip{\y}{\v e_i} = \ip{\v u}{\v e_i},\ \ \ i = 1,\ldots,m;\\
        p_k(\y) = 0,\ \ \ \text{for all } k;\\
        q_\ell(\y) \leq 0\ \ \ \ell = 1,\ldots,b.
    \end{gather*}
    Using \cite[Theorem 7.38]{basu2006algorithms}, we can bound $CC(S)\leq \dim(V') \cdot 7^b (2d)^N$ uniformly over $m$, where $V'$ is the variety defined by polynomials $p_k$, and $\dim(V') \leq N$.
    On the other hand, we can introduce $b$ dummy variables and replace the $q_\ell$ inequalities with
    \begin{equation*}
        q_\ell(\y) + \go_\ell^2 = 0.
    \end{equation*}
    Then POMT gives $CC(S) \leq (2d)^{N + b}$.
    Finally, the results in \cite{gabrielov2009approximation} states that $CC(S) \leq (2d)^N \cdot (\const b^2)^N$.
    The constant $\const$ to the best of our knowledge has no clear specification.
    As the continuous map $(\v y, \v w) \mapsto \v y$ sends $S$ onto $U\inter L$. 
    Thus, $CC(U\inter L) \leq CC(S) \leq (2d)^N\min\set{7^b, (2d)^b, (\const b^2)^N}$.
    Again, when $m > n$ and $L$ is Zariski generic, $S = \eset$ because a semialgebraic set of dimension at most $n$ is contained in a variety of dimension at most $n$. 
    Hence the proof is complete. 
\end{proof}

\section{Additional Proofs in \cref{sec:cp}} \label{app:cp}

\grasstostief*

\begin{proof}
    Following the classic theory of principal angles, we can find an orthonormal basis $\m U = (\v u_1|\ldots|\v u_k)$ of $E$ and $\m V = (\v v_1|\ldots|\v v_k)$ of $F$ such that
    \begin{equation*}
        \m P_E\m P_F = \m U \m D \m V^\top,
    \end{equation*}
    where $\m D$ is a nonnegative diagonal matrix satisfying 
    $\U^\top \V = \D$ (in particular $d_i = \ip{\v u_i}{\v v_i} \in [0, 1]$).
    Following the arguments in \cite{pajor1998metric}, for $d_i < 1$, define
    \begin{equation*}
        \tilde{\v u}_i = \frac{-d_i \v u_i + \v v_i}{\sqrt{1-d_i^2}},~\text{and}~
        \tilde{\v v}_i = \frac{-\v u_i + d_i \v v_i}{\sqrt{1 - d_i^2}}.
    \end{equation*}
    Then the rank at most $2k$ matrix $\m P_E - \m P_F$ has the SVD
    \begin{equation*}
        \m P_E - \m P_F = \sum_{i:~d_i < 1} \sqrt{1-d_i^2}\left(\tilde{\v u}_i \v v_i^\top + \v u_i \tilde{\v v}_i^\top\right).
    \end{equation*}
    Without loss of generality, let us assume $\|\P_E - \P_F\| = \eps$.
    The above then implies that $\eps \leq 1$, and
    consequently the norm condition implies $\min_i d_i \geq \sqrt{1 - \gee^2}$.

    Now for any basis $\m Q_E$ of $E$, let $\m M$ be the $k\times k$ orthogonal matrix such that $\m Q_E = \m U\m M$.
    The candidate basis for $F$ is $\m Q_F = \m V\m M$.
    Indeed, with this choice of $\m Q_F$, we have
    \begin{align*}
        \norm{\m Q_E - \m Q_F}_2
        &= \norm{\m U - \m V}_2 
            = \|(\m U - \m V)^\top (\m U - \m V)\|_2^{1/2} 
            = \sqrt{2} \norm{\m I - \m D}_2^{1/2}\\
        &\leq \sqrt{2}\left(1 - \sqrt{1 - \gee^2}\right)^{1/2}
            \leq \sqrt{2} \gee.
    \end{align*}
    Thus the proof is complete.
\end{proof}

\section{Additional Proofs in \cref{sec:sketching}} \label{app:sketching}

\begin{lemma}
    \label{lem:bnd_sqrt_ln}
    For $0 \leq a \leq  b \leq c$, we have
    \begin{equation*}
        \int_a^b \sqrt{\log(c/\gee)} \,d\gee \leq b\sqrt{\pi} + \gee\sqrt{\log(c/\gee)}\Big\rvert_a^b.
    \end{equation*}
\end{lemma}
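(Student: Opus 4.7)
The plan is to apply integration by parts to isolate the boundary term $\eps\sqrt{\log(c/\eps)}|_a^b$ and then to bound the remaining integral by $b\sqrt{\pi}$. Compute the derivative $\frac{d}{d\eps}\bigl[\eps\sqrt{\log(c/\eps)}\bigr] = \sqrt{\log(c/\eps)} - \frac{1}{2\sqrt{\log(c/\eps)}}$, so that after rearranging and integrating,
\begin{equation*}
    \int_a^b \sqrt{\log(c/\eps)}\,d\eps \;=\; \eps\sqrt{\log(c/\eps)}\Big|_a^b \;+\; \int_a^b \frac{d\eps}{2\sqrt{\log(c/\eps)}}.
\end{equation*}
It remains to show $\int_a^b \frac{d\eps}{2\sqrt{\log(c/\eps)}} \leq b\sqrt{\pi}$.

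For this, I would substitute $\eps = c e^{-u^2}$ with $u \geq 0$, so that $d\eps = -2u c e^{-u^2}\,du$ and $\sqrt{\log(c/\eps)} = u$. Writing $u_a := \sqrt{\log(c/a)}$ and $u_b := \sqrt{\log(c/b)}$, this converts the integral into $\int_{u_b}^{u_a} c e^{-u^2}\,du$.

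The key inequality is that for $u \geq u_b \geq 0$, one has $u^2 \geq u_b^2 + (u - u_b)^2$ (equivalently $2 u u_b \geq 2 u_b^2$, which follows from $u \geq u_b$). Since $e^{-u_b^2} = b/c$, this gives $c e^{-u^2} \leq b e^{-(u-u_b)^2}$. Hence
\begin{equation*}
    \int_{u_b}^{u_a} c e^{-u^2}\,du \;\leq\; b\int_{u_b}^{u_a} e^{-(u-u_b)^2}\,du \;\leq\; b\int_0^\infty e^{-w^2}\,dw \;=\; \frac{b\sqrt{\pi}}{2} \;\leq\; b\sqrt{\pi},
\end{equation*}
completing the proof. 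The only step requiring any care is the elementary inequality $u^2 \geq u_b^2 + (u-u_b)^2$ that enables reducing the Gaussian tail back to $b$ times a standard Gaussian integral; everything else is bookkeeping.
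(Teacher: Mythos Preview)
Your proof is correct and follows essentially the same approach as the paper: both integrate by parts and then bound the remaining integral $\int_a^b \frac{d\eps}{2\sqrt{\log(c/\eps)}} = \int_{u_b}^{u_a} c e^{-u^2}\,du$. The paper phrases this remainder via the error function and applies the standard Gaussian tail bound $\mathbb{P}(|\mathcal{N}(0,1)| \geq t) \leq 2e^{-t^2/2}$, while your elementary inequality $u^2 \geq u_b^2 + (u-u_b)^2$ is a self-contained proof of the same tail estimate and in fact yields the slightly sharper bound $b\sqrt{\pi}/2$.
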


\begin{proof}
    Integration by parts gives
    \begin{equation*}
        \int_a^b \sqrt{\log(c/\gee)} \,d\gee
        = 
        \gee\sqrt{\log(c/\gee)}\Big\rvert_a^b
            + \frac{1}{2}c\sqrt{\pi}\left .\erf\left(\sqrt{\log\frac{c}{\gee}}\right)\right\vert_b^a,
    \end{equation*}
    where $\erf$ is the error function $\erf(x) = \int_0^x \frac{2}{\sqrt{\pi}}e^{-y^2}\,dy$.
    So the error function term is upper bounded by
    \begin{equation*}
        \left .\erf\left(\sqrt{\log\frac{c}{\gee}}\right)\right\vert_b^{0}
        =
        \pr\left(|\cN(0, 1)| \geq \sqrt{2}\sqrt{\log(c/b)}\right)
        \leq
        2e^{-\log(c/b)}. 
    \end{equation*}
    Plugging into the exact integration, we get the claimed result.
\end{proof}

The following result is classic, see for example \cite{vershynin2018high}, but it is more convenient in our context.

\begin{lemma}\label{lem:subG_norm}
    Let $\S \in \R^{m \times M}$ be a matrix in $\subG(\ga)$.
    There exist some absolute constants $c_1, c_2 > 0$, so that for any $\gd \in (0, 1)$ and $u > \sqrt{c_2/c_1}$, 
    \begin{equation*}
        \pr\left(\norm{\S} \geq \ga u\right) \leq \gd
    \end{equation*}
    provided that
    \begin{equation*}
        m \geq (c_1u^2 - c_2)^{-1}(c_2 M + \log(1/\gd)).
    \end{equation*}
\end{lemma}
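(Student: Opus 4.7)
The plan is to deduce the lemma from a standard concentration bound on the operator norm of a matrix with independent sub-Gaussian entries, and then to rearrange the result into the stated form. Write $\m S = \m G/\sqrt{m}$ as in \cref{def:gauss_rp}, so $\m G$ has independent, mean-zero, unit-variance entries with $\|G_{ij}\|_{\psi_2} \leq \ga$. A well-known result (e.g., Vershynin, \emph{High-Dimensional Probability}, Theorem 4.4.5) gives an absolute constant $C > 0$ such that for every $s \geq 0$,
\begin{equation*}
    \norm{\m G}_2 \,\leq\, C\ga\bigl(\sqrt{m} + \sqrt{M} + s\bigr)
\end{equation*}
with probability at least $1 - 2\exp(-s^2)$.

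I would set $s = \sqrt{\log(2/\gd)}$ so that the failure probability is at most $\gd$, then divide by $\sqrt{m}$ to obtain $\norm{\m S}_2 \leq C\ga\bigl(1 + \sqrt{M/m} + \sqrt{\log(2/\gd)/m}\bigr)$. Using the elementary inequality $\sqrt{a} + \sqrt{b} \leq \sqrt{2(a+b)}$ on the two square-root terms, the conclusion $\norm{\m S}_2 \leq \ga u$ is then implied, whenever $u > C$, by the sufficient condition
\begin{equation*}
    m \,\geq\, \frac{2C^2\bigl(M + \log(2/\gd)\bigr)}{(u - C)^2}.
\end{equation*}

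To recast this in the lemma's form, I would pick $c_1, c_2$ so that $c_1 u^2 - c_2 \leq (u - C)^2/(2C^2)$ holds for all $u \in \R$. Concretely, with $c_1 = 1/(4C^2)$ this reduces to the universally valid quadratic $u^2/(4C^2) - u/C + c_2 - 1/2 \geq 0$, which holds once $c_2 \geq 1/2$. I would then enlarge $c_2$ if necessary so that $c_2 M + \log(1/\gd) \geq M + \log(2/\gd)$; since $M \geq 1$, any $c_2 \geq 1 + \log 2$ suffices. The condition $u > \sqrt{c_2/c_1}$ in the lemma automatically ensures $c_1 u^2 - c_2 > 0$, and with these choices the lemma's hypothesis $m \geq (c_1 u^2 - c_2)^{-1}(c_2 M + \log(1/\gd))$ implies the sufficient condition displayed above, completing the proof.

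The main obstacle here is not conceptual but just careful bookkeeping of constants while packaging everything into the form $(c_1 u^2 - c_2)^{-1}(c_2 M + \log(1/\gd))$ that is convenient for the application in the proof of \cref{thm:sketch_opt}; the substantive probabilistic content, namely the operator-norm concentration for sub-Gaussian matrices, is a standard off-the-shelf ingredient that I would cite rather than reprove.
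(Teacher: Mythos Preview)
Your proposal is correct and rests on the same tool as the paper---Vershynin's Theorem 4.4.5 on operator norms of sub-Gaussian matrices---so the approaches are essentially the same. The only minor difference is in packaging: the paper works with the intermediate tail bound from the net argument, $\pr(\norm{\S}\geq t)\leq 9^{m+M}\cdot 2\exp(-c_1 m(t/\ga)^2)\leq \exp(c_2(M+m)-c_1 m u^2)$, from which the form $(c_1 u^2 - c_2)^{-1}(c_2 M+\log(1/\gd))$ drops out in one line by solving the exponent for $m$; you instead use the final inequality $\norm{\G}\leq C\ga(\sqrt m+\sqrt M+s)$ and then spend a paragraph of quadratic bookkeeping to massage it into the same shape. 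Both routes are fine, but the paper's is shorter precisely because the lemma's statement was engineered to match the exponential tail form.
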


\begin{proof}
    We follow the proof in \cite[Theorem 4.4.5]{vershynin2018high}. 
    Using a standard net argument,
    we know that for $t \geq 0$,
    \begin{equation*}
        \norm{\S} = \max_{\x \in \cB^m, \y \in \cB^M} \ip{\x}{\S \y} \geq t
    \end{equation*}
    happens with probability at most
    \begin{equation*}
        p(t) := 9^{m + M}\cdot 2 \exp\big(- c_1 m (t/\ga)^2\big)
        \leq
        \exp(c_2(M + m) - c_1 m (t/\ga)^2). 
    \end{equation*}
    Taking $t = \ga u$ with $u^2 > c_2/c_1$, we see that $p(\ga u) \leq \gd$ as long as $m$ satisfies the condition in the statement.
\end{proof}

\section{Additional Proofs in \cref{sec:gen_err_rat}}

In this section, we list the lemmas used to prove \cref{thm:covering_ratnn} as well as \cref{cor:gen_err_rational} together with their proofs.
We denote $\gS_nR^\ga_\gb$ denotes a sum of $n$ rational functions in $R^\ga_\gb$.

\begin{lemma}
    \label{lem:degree_single_layer}
    Suppose $\m X^*_{i}(\gL) \in R^\theta_\theta$ and the rational function $\gr_{i+1} \in R^s_s$ are fixed.
    Then for $\ratnn$ \eqref{eq:def_ratnn}
    \begin{equation} \label{eq:ratnn_notrain_deg_layer}
        \m X^*_{i+1}(\gL) \in R^{s(1 + d_i\theta)}_{s(1 + d_i\theta)};
    \end{equation}
    and for $\ratcnn$ \eqref{eq:def_ratcnn},
    \begin{equation} \label{eq:ratcnn_notrain_deg_layer}
        \m X^*_{i+1}(\gL) \in R^{s(1 + c_{i}k^2\theta)}_{s(1 + c_{i}k^2\theta)}.
    \end{equation}
    If instead the rational activation is trainable with variable coefficients, then
    for $\ratnn$ \eqref{eq:def_ratnn}
    \begin{equation}\label{eq:ratnn_train_deg_layer}
        \m X^*_{i+1}(\gL) \in R^{s(1 + d_i\theta) + 1}_{s(1 + d_i\theta) + 1};
    \end{equation}
    and for $\ratcnn$ \eqref{eq:def_ratcnn},
    \begin{equation}\label{eq:ratcnn_train_deg_layer}
        \m X^*_{i+1}(\gL) \in R^{s(1 + c_{i}k^2\theta) + 1}_{s(1 + c_{i}k^2\theta) + 1}.
    \end{equation}
\end{lemma}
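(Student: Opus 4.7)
The statement is a bookkeeping claim: given that each coordinate of $\m X^*_i(\gL)$ is a quotient of polynomials in $\gL$ of degree at most $\theta$, we have to track what happens to these degrees as we pass through the affine map $\A_{i+1}(\cdot) + \b_{i+1}$ (or its convolutional analog), followed by the rational activation $\gr_{i+1}$. My plan is to carry out the computation in two steps, treating the fully connected and convolutional cases in parallel, and then handling fixed vs.\ trainable activations at the end.

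\paragraph{Step 1: degree after the affine/convolutional layer.}
Write the $k$-th coordinate of $\m X^*_i(\gL)$ as $p_k/q_k$ with $\deg p_k, \deg q_k \leq \theta$. In the fully connected case the $j$-th coordinate of $\A_{i+1}\m X^*_i(\gL) + \b_{i+1}$ equals
\begin{equation*}
\frac{\sum_{k=1}^{d_i}(\A_{i+1})_{jk}\, p_k \prod_{\ell \neq k} q_\ell \;+\; (\b_{i+1})_j \prod_{k=1}^{d_i} q_k}{\prod_{k=1}^{d_i} q_k},
\end{equation*}
which lies in $R^{1+d_i\theta}_{d_i\theta} \subseteq R^{1+d_i\theta}_{1+d_i\theta}$, since the entries of $\A_{i+1}$ and $\b_{i+1}$ contribute degree $1$ in $\gL$. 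In the convolutional case each output coordinate is a sum of $c_i k^2$ such terms (one per entry of the kernel window), so exactly the same computation yields membership in $R^{1+c_i k^2 \theta}_{1+c_i k^2 \theta}$.

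\paragraph{Step 2: composition with the activation.}
Now write $\gr_{i+1}(u) = P(u)/Q(u)$ with $\deg P, \deg Q \leq s$, and substitute $u = A/B$ where $A,B$ are the numerator and denominator from Step 1. Clearing denominators via $B^s$,
\begin{equation*}
\gr_{i+1}(A/B) \;=\; \frac{\sum_{\ell=0}^{s} P_\ell A^\ell B^{s-\ell}}{\sum_{\ell=0}^{s} Q_\ell A^\ell B^{s-\ell}}.
\end{equation*}
If the activation is fixed, the coefficients $P_\ell, Q_\ell$ are constants, and each monomial $A^\ell B^{s-\ell}$ has degree $s(1+d_i\theta)$ (resp.\ $s(1+c_i k^2 \theta)$), proving \eqref{eq:ratnn_notrain_deg_layer} and \eqref{eq:ratcnn_notrain_deg_layer}. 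If the activation is trainable, the coefficients $P_\ell, Q_\ell$ are themselves degree-$1$ variables in $\gL$, so each monomial acquires one extra degree, yielding \eqref{eq:ratnn_train_deg_layer} and \eqref{eq:ratcnn_train_deg_layer}.

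\paragraph{Main obstacle.}
There is no real obstacle; the argument is a direct degree count. The only point that requires a bit of care is clearing denominators correctly when the $q_k$ are distinct (rather than assuming a shared denominator) and when $P,Q$ have degree strictly less than $s$, which is handled by writing sums indexed up to $s$ and allowing some coefficients to vanish. The two extra degrees picked up in \eqref{eq:ratnn_train_deg_layer}/\eqref{eq:ratcnn_train_deg_layer} come from multiplying together numerator coefficient (degree $1$) with the monomial $A^\ell B^{s-\ell}$ (degree $s(1+d_i\theta)$ or $s(1+c_i k^2 \theta)$), giving precisely the claimed bounds.
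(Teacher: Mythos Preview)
Your proposal is correct and follows essentially the same approach as the paper: both track degrees through the affine (or convolutional) layer to get $R^{1+d_i\theta}_{d_i\theta}$ (resp.\ $R^{1+c_ik^2\theta}_{c_ik^2\theta}$), then compose with $\gr_{i+1}\in R^s_s$ by clearing denominators, and finally observe that trainable coefficients add one extra degree. The paper's proof is more terse, using the shorthand $\Sigma_{d_i} R^{\theta+1}_\theta \subseteq R^{d_i\theta+1}_{d_i\theta}$ in place of your explicit common-denominator formula, but the substance is identical.
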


\begin{proof}
    First we prove the non-trainable case.
    For rational neural networks, $\m A_{i+1} \m X^*_{i}(\gL) + \v b_{i+1} \in \gS_{d_i} R^{\theta+1}_\theta + R^1_0 \sset R^{d_i\theta+1}_{d_i\theta} + R^1_0 \sset R^{d_i\theta + 1}_{d_i\theta}$.
    Then $\m X^*_{i+1}(\gL) \in R^s_s \circ R^{d_i\theta + 1}_{d_i\theta} \sset R^{s (d_i\theta + 1)}_{s (d_i\theta + 1)}$ proving the first assertion.

    For convolutional case, denote $\m K_{pq}$ for $p \in [c_{i+1}]$ and $q \in [c_{i}]$ the kernel in $\R^{k\times k}$.
    Denote $\m X^*_i(\gL)_q \in \R^{d_i}$ the $q$th channel of the input $\m X^*_i(\gL) \in \R^{c_i \times d_i}$.
    Then $\m K_{pq} \ast \m X^*_i(\gL)_q \in \sum_{k^2} R^{\theta +1}_\theta \sset R^{k^2\theta+1}_{k^2\theta}$.
    Summing over $q \in [c_{i}]$, we have
    $\t K_i \m X^*_i(\gL) \in \sum_{c_i} R^{k^2\theta+1}_{k^2\theta} \sset R^{c_ik^2\theta + 1}_{c_ik^2\theta}$.
    Finally $\m X^*_{i+1}(\gL) \in R^s_s\circ (R^{c_ik^2\theta + 1}_{c_ik^2\theta} + R^1_0) \sset R^{s(1 + c_{i}k^2\theta)}_{s(1 + c_{i}k^2\theta)}$ proving assertion 2.

    For cases where the activation is trainable, the coefficients in $\gr_{i+1}$ simply raise the degree of both numerator and denominator by 1. Thus the next two assertions follow directly.
\end{proof}

\begin{lemma} \label{lem:degree_ratnn}
    Suppose the activations in $\ratnn$ or $\ratcnn$ are in $R^s_s$ and $k \geq 2$, $d_i \geq 2$ for all $i \leq L-1$.
    If the activations are not trainable, then for $\ratnn$, 
    \begin{equation*}
        \m X^*(\gL) \in R^{d^\sharp}_{d^\sharp},~~d^\sharp = 2d_1\ldots d_{L-1} s^L;
    \end{equation*}
    and for $\ratcnn$,
    \begin{equation*}
        \m X^*(\gL) \in R^{d^*}_{d^*},~~d^* = 2c_1\ldots c_{L-1} s^Lk^{2(L-1)}.
    \end{equation*}
    If the activations are trainable, then respectively
    \begin{equation*}
        d^\sharp = 2(1+s^{-1})d_1\ldots d_{L-1} s^L,~~d^* = 2(1+s^{-1})c_1\ldots c_{L-1} s^Lk^{2(L-1)}.
    \end{equation*}
\end{lemma}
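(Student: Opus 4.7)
The plan is to chain together the per-layer bound from \cref{lem:degree_single_layer} and solve the resulting linear recursion, then bound it by a geometric series. Let $\theta_i$ be an upper bound on the numerator and denominator degrees of $\m X_i^*(\gL)$, viewed as a rational map in $\gL$. Since $\m X_0^*(\gL) = \m X$ does not depend on $\gL$, we start with $\theta_0 = 0$. For non-trainable fully connected case, \cref{lem:degree_single_layer} gives $\theta_{i+1} = s(1 + d_i \theta_i)$; for the trainable case, $\theta_{i+1} = s(1 + d_i \theta_i) + 1$; and for the convolutional variants, $d_i$ is replaced by $c_i k^2$. So the bookkeeping reduces to unrolling a first-order linear recurrence.

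I would unroll the non-trainable $\ratnn$ recursion to obtain
\begin{equation*}
    \theta_L \;=\; \sum_{k=1}^{L} s^{k}\, d_{L-1} d_{L-2} \cdots d_{L-k+1},
\end{equation*}
where an empty product equals $1$. The leading term $s^L d_1 \cdots d_{L-1}$ already matches the claimed $d^\sharp/2$, so the task is to show the entire sum is at most twice this term. Dividing through,
\begin{equation*}
    \frac{\theta_L}{s^L d_1 \cdots d_{L-1}} \;=\; \sum_{l=0}^{L-1} \frac{1}{s^{l}\, d_1 d_2 \cdots d_{l}}.
\end{equation*}
Using the hypotheses $d_i \geq 2$ and $s \geq 1$, each summand is bounded by $(2s)^{-l} \leq 2^{-l}$, so the sum is at most $\sum_{l \geq 0} 2^{-l} = 2$. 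This yields $\theta_L \leq 2 s^L \prod_{j=1}^{L-1} d_j = d^\sharp$.

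The trainable fully connected case is identical in spirit: the recurrence $\theta_{i+1} = s(1 + d_i\theta_i) + 1 = (s+1) + s d_i \theta_i$ unrolls to
\begin{equation*}
    \theta_L \;=\; (s+1)\sum_{k=0}^{L-1} s^{k} \prod_{j=L-k}^{L-1} d_j,
\end{equation*}
and the same geometric-sum bound produces $\theta_L \leq 2(s+1)\, s^{L-1}\prod_{j=1}^{L-1} d_j = 2(1+s^{-1})\, s^L \prod_{j=1}^{L-1} d_j$. The two $\ratcnn$ assertions follow by replacing $d_i$ with $c_i k^2$ throughout; the geometric series still converges since we assume $k \geq 2$, so $c_i k^2 \geq 2$.

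No step here is a real obstacle; the only care needed is in the bookkeeping of the unrolled sum and in verifying that the assumptions $d_i, k \geq 2$ (together with $s \geq 1$) are exactly what is required to dominate the tail by a geometric series with ratio $1/2$. If one wanted to avoid unrolling, an alternative is to prove the bound $\theta_i \leq 2 s^i d_1 \cdots d_{i-1}$ by induction on $i$, using at the inductive step that $s(1 + d_i \cdot 2 s^i d_1 \cdots d_{i-1}) \leq 2 s^{i+1} d_1 \cdots d_i$ whenever $1 \leq s^i d_1 \cdots d_i$, which again holds under our hypotheses.
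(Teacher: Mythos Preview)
Your proposal is correct and follows essentially the same route as the paper: set up the per-layer recursion from \cref{lem:degree_single_layer}, unroll it into a sum of $L$ terms, and bound that sum by a geometric series with ratio at most $1/2$ using $d_i \geq 2$ (resp.\ $c_i k^2 \geq 2$) and $s \geq 1$. The paper phrases the last step as ``ratios between adjacent terms are at least 2'' rather than dividing through by the leading term, but the content is identical.
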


\begin{proof}
    For non-trainable $\ratnn$ case, suppose $\m X^*_i(\gL) \in R^{\gD_i}_{\gD_i}$, and repeatedly apply \eqref{eq:ratnn_notrain_deg_layer}, we get 
    \begin{equation*}
        \gD_1 = s,~~\gD_{i+1} = s(1 + d_i\gD_i).
    \end{equation*}
    Thus $\gD_L = s + s^2d_{L-1} + s^3d_{L-1}d_{L-2} +\ldots + s^Ld_{L-1}\ldots d_{1}$. Since $s \geq 1$ and $d_i \geq 2$ for $i \leq L-1$, the ratios between adjacent terms are at least 2. Thus $\gD_L \leq d^\sharp$ as claimed.
    By comparing \eqref{eq:ratcnn_notrain_deg_layer} and \eqref{eq:ratnn_notrain_deg_layer}, the convolution map $\t K_i$ can be viewed as a linear map of dimension $\R^{c_{i} \times c_{i-1}k^2}$. Thus, the formula for $d^*$ follows from the same argument.
    Finally, for $\ratnn$ with trainable activations, the recursion relation becomes
    \begin{equation*}
        \gD_1 = s+1,~~\gD_{i+1} = s(1 + d_i\gD_i) + 1.
    \end{equation*}
    So $\gD_L = (s+1)(1 + sd_{L-1} + s^2d_{L-1}d_{L-2} + \ldots + s^{L-1}d_{L-1}\ldots d_1) \leq (1 + s^{-1}) \cdot 2d_1\ldots d_{L-1} s^L$. The conclusion for $\ratcnn$ follows similarly.
\end{proof}

\section{Additional Proofs in \cref{sec:gen_err_common}} \label{app:gen_err_common}

In this section we give the full proof of \cref{thm:gen_err_common_full}.
Notations and detailed setup are defined in \cref{sec:gen_err_common}.
Our main strategy is to approximate the ReLU function at each layer using rational activations, and then apply our result on rational neural networks to control the covering number of the image $\cF(\m X)$. 
This is followed by the standard Dudley integral argument to control the Rademacher complexity.
We start by results on rational approximations.
The following result is essentially the same as Lemma 1 in \cite{boulle2020rational}, modified to adapt to our context.

\begin{lemma} \label{lem:rat_approx_relu}
    (Lemma 1 in \cite{boulle2020rational}, modified)
    For any $t > 0$ and $\gee > 0$, there exists $\gr \in R^{1 + \gD}_{\gD}$, where $\gD \leq \frac{6}{\pi^2} \log_+^2 (4 t/\eps)$, such that $\norm{\gr - \relu}_{L^\infty(-t, t)} \leq \eps$, and $\im(\gr) \sset [0, t]$ on $[-t, t]$.
\end{lemma}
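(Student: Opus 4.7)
The plan is to reduce the problem to the classical question of rational approximation of the absolute value function and then pass to $\relu$ via the identity $\relu(x) = (x + |x|)/2$. This is the strategy used by Boull\'e--Nakatsukasa--Townsend in their original Lemma~1 (which the statement explicitly modifies), so the bulk of the work is already available; the remaining task is to rescale the interval of approximation from $[-1,1]$ to $[-t,t]$ and to enforce the range condition $\im(\rho) \subseteq [0, t]$.

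First I would invoke Newman's theorem together with Stahl's sharpened constants: there is a rational function $r_n(x)$ of numerator and denominator degrees at most $n$ such that $\||x| - r_n\|_{L^\infty[-1,1]} \leq C e^{-\pi \sqrt{n}}$, and one can arrange $r_n$ to be one-sided, meaning $|x| \leq r_n(x) \leq |x| + C e^{-\pi\sqrt{n}}$ on $[-1,1]$. Rescaling through $y = x/t$ yields $\tilde r_\Delta(x) := t\, r_\Delta(x/t)$, a rational function in $R^\Delta_\Delta$ satisfying $|x| \leq \tilde r_\Delta(x) \leq |x| + \eps/2$ on $[-t,t]$ once $\Delta \geq \frac{6}{\pi^2} \log_+^2(4t/\eps)$; the precise prefactor $6/\pi^2$ comes from inverting the sharp asymptotic $\sqrt{\Delta} \geq \tfrac{1}{\pi}\log(Ct/\eps)$ and absorbing $C$ into the argument of the logarithm.

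Defining $\rho_0(x) = \tfrac{1}{2}(x + \tilde r_\Delta(x))$ gives $\rho_0 \in R^{1+\Delta}_\Delta$ (the denominator is unchanged, the numerator picks up one degree from the linear term after clearing), and the one-sidedness yields $\relu(x) \leq \rho_0(x) \leq \relu(x) + \eps/4$ on $[-t,t]$, so in particular $\rho_0(x) \geq 0$ but $\rho_0(x)$ may slightly exceed $t$. The main technical obstacle is then the upper range bound: I would correct $\rho_0$ by a small affine modification, for example $\rho(x) = (1 - \eps/(4t))\rho_0(x)$, which keeps $\rho \in R^{1+\Delta}_\Delta$ and $\rho \geq 0$, forces $\rho(x) \leq t$ on $[-t,t]$, and inflates the error by at most another $\eps/2$. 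Replacing $\eps$ by $\eps/4$ from the start and rebalancing the constants inside $\log_+(4t/\eps)$ yields the stated bound.

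Thus the only nontrivial ingredients are the existence of a one-sided Newman-type approximant and the sharp exponent $\pi$ in the decay rate. Both are classical; everything else is bookkeeping. The most delicate point is the factor $6/\pi^2$ in $\Delta$, which forces the use of Stahl's refined constant rather than Newman's original weaker bound of order $e^{-\sqrt{n}}$.
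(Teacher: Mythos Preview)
Your approach is correct in outline but differs from the paper's. The paper does not invoke Newman/Stahl at all: it simply quotes Lemma~1 of Boull\'e--Nakatsukasa--Townsend as a black box. That lemma produces, for any $\delta\in(0,1)$, a rational $\rho_0$ on $[-1,1]$ built as a \emph{composition} of $k$ maps each in $R^3_2$, with $\|\rho_0-\relu\|_{L^\infty(-1,1)}\le\delta$, $|\rho_0|\le 1$, and $k\le \frac{\log(6/\pi^2)+2\log\log(4/\delta)}{\log 3}$. The degree bound is then $3^k\le \frac{6}{\pi^2}\log^2(4/\delta)$, after which the paper just rescales via $\rho(x)=t\,\rho_0(x/t)$ with $\delta=\gee/t$. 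Two consequences: (i) the constant $6/\pi^2$ is not, as you suggest, obtained by inverting Stahl's sharp exponent---that would give the smaller prefactor $1/\pi^2$; it is exactly the $3^k$ arising from Zolotarev-style iteration; and (ii) the range condition $|\rho_0|\le 1$ is part of the cited lemma, so the paper needs no post-hoc affine correction.

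Your route via $\relu=(x+|x|)/2$ and a one-sided Newman approximant is a legitimate alternative and would in fact yield a tighter degree constant. The weak spots are that the one-sidedness of the best rational approximant to $|x|$ is not automatic (you have to shift by the error, which you should say explicitly), and your multiplicative correction $\rho=(1-\gee/(4t))\rho_0$ only handles the upper range bound; you still need $\rho\ge 0$, which is preserved by the correction but relies on the one-sidedness you asserted. None of this is fatal, but it is more bookkeeping than the paper's two-line rescaling, and your account of where the $6/\pi^2$ comes from should be amended.
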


\begin{proof}
    The assertion is obviously true if $\gee \geq t$, where we can take $\gr$ to be a constant function.
    Consider $\gee \in (0, t)$.
    By Lemma 1 in \cite{boulle2020rational}, for any $\gd \in (0, 1)$, there exists a rational map $\gr_0$ being a composition of $k$ maps in $R^3_2$ such that $|\gr_0(x)|\leq 1$ on $[-1, 1]$ and $\norm{\gr_0 - \relu}_{L^\infty(-1, 1)} \leq \gd$, where
    \begin{equation*}
        k = \ceil{\frac{\log(2/\pi^2) + 2 \log\log(4/\gd)}{\log 3}}
        \leq \frac{\log(6/\pi^2) + 2 \log\log(4/\gd)}{\log 3}.
    \end{equation*}
    Therefore, the degree of $\gr_0$ is bounded by
    \begin{equation*}
        3^k \leq \frac{6}{\pi^2} \log^2 (4 /\gd).
    \end{equation*}
    Now set $\gd = \eps / t$, and define $\gr(x) = t\gr_0(x/t)$, then $|\gr(x)| \leq t$ on $[-t, t]$ and for any $x \in [-t, t]$, we have
    \begin{equation*}
        |\gr(x) - \relu(x)| = t |\gr_0(x/t) - \relu(x/t)| \leq \eps. 
    \end{equation*}
    This finishes the proof.
\end{proof}

When the $\relu$ activation is approximated be a rational map at layer $i$, this approximation error will propagate to later layers.
Next we control this propagation error.
We first fix some notations.
For $i \in [L]$, we denote the bound on the input to layer $i$ by $\l_{i-1}$. 
That is, for network $f$,
$\norm{f_{i-1}(\m X)}_{\max} \leq \l_{i-1}$.
By convention, since the data is bounded as $\norm{\m X}_{\max} \leq 1$, we define $\l_0 = 1$.
As illustrated previously, we replace ReLU activations in $f$ by rational approximations. 
We denote the rational approximation in layer $i$ as $\gr_i$.
we will use \cref{lem:rat_approx_relu} to construct these $\gr_i$, and thus we are guaranteed that $|\gr_i(x)|\leq \go_i\l_{i-1}$ on $x \in [-\go_i\l_{i-1}, \go_i\l_{i-1}]$.
Hence we have $\l_i \leq \prod_{j < i}\go_j$.
We denote the resulting rational network by $\hat{f}$.
The following lemma controls the propagation error.

\begin{lemma} \label{lem:propagation}
    Let everything be defined as above.
    If $\|f_{i-1}(\x) - \hat{f}_{i-1}(\x)\|_{\infty} \leq \gd_{i-1}$ for all $\norm{\x}_{\infty} \leq 1$ and $\norm{\gr_{i} - \relu}_{\infty} \leq \gee_i$ on $[-\go_i\l_{i-1}, \go_i\l_{i-1}]$, 
    then $\|f_i(\x) - \hat{f}_i(\x)\|_{\infty} \leq \gee_i + \go_i\gd_{i-1}$ for all $\norm{\x}_{\infty} \leq 1$.
\end{lemma}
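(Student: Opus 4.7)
Fix $\x$ with $\|\x\|_\infty \leq 1$ and introduce the pre-activation vectors
\[
\v y_i \,:=\, \A_i f_{i-1}(\x) + \v b_i, \qquad \hat{\v y}_i \,:=\, \A_i \hat f_{i-1}(\x) + \v b_i,
\]
so that $f_i(\x) = \relu(\v y_i)$ and $\hat f_i(\x) = \rho_i(\hat{\v y}_i)$ (applied entrywise). My plan is to split the layer error by adding and subtracting $\relu(\hat{\v y}_i)$, and then bound each piece using one of the two hypotheses:
\[
\|f_i(\x) - \hat f_i(\x)\|_\infty \,\leq\, \|\relu(\hat{\v y}_i) - \rho_i(\hat{\v y}_i)\|_\infty \,+\, \|\relu(\v y_i) - \relu(\hat{\v y}_i)\|_\infty.
\]

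For the second summand, I would invoke the fact that $\relu$ is $1$-Lipschitz coordinatewise, so it is bounded by $\|\v y_i - \hat{\v y}_i\|_\infty = \|\A_i(f_{i-1}(\x) - \hat f_{i-1}(\x))\|_\infty$. Interpreting $\|(\A_i|\v b_i)\|_\infty \leq \omega_i$ as the induced $\ell_\infty \to \ell_\infty$ operator norm on the augmented matrix (which is the convention consistent with the rest of Section~4.3.2, since it is exactly what produces the recursion $\ell_i \leq \omega_i \ell_{i-1}$), this term is at most $\omega_i\,\|f_{i-1}(\x) - \hat f_{i-1}(\x)\|_\infty \leq \omega_i \delta_{i-1}$.

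For the first summand, the hypothesis $\|\rho_i - \relu\|_\infty \leq \epsilon_i$ is available only on the interval $[-\omega_i \ell_{i-1},\,\omega_i \ell_{i-1}]$, so the real task is to check that every coordinate of $\hat{\v y}_i$ falls inside this interval. This is where I would use the boundedness of $\hat f_{i-1}$: by the construction of $\rho_{i-1}$ via \cref{lem:rat_approx_relu}, $|\rho_{i-1}(t)|$ does not exceed the bound on the ReLU output on the approximation interval, so by induction $\|\hat f_{i-1}(\x)\|_\infty \leq \ell_{i-1}$; combined with $\|(\A_i|\v b_i)\|_\infty \leq \omega_i$ this yields $\|\hat{\v y}_i\|_\infty \leq \omega_i \ell_{i-1}$, as desired. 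Consequently the first summand is at most $\epsilon_i$, and adding the two bounds delivers $\epsilon_i + \omega_i \delta_{i-1}$.

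The only delicate step is the interval check for $\hat{\v y}_i$ in the first summand, since the error propagation naturally pushes $\hat f_{i-1}$ slightly away from $f_{i-1}$ and one needs the $\ell_\infty$-bound on $\hat f_{i-1}$ to apply the approximation estimate. I would record this as an inductive invariant (carried alongside the $\delta_i$-recursion) rather than treating it as an extra hypothesis; the rest of the argument is a routine triangle-inequality splitting.
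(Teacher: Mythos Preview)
Your proposal is correct and matches the paper's proof essentially line for line: both insert the intermediate point $\relu(\hat{\v y}_i)$, bound the $\rho_i$-vs-$\relu$ piece by $\varepsilon_i$ after checking $\|\hat{\v y}_i\|_\infty \le \omega_i\ell_{i-1}$, and bound the $\relu$-vs-$\relu$ piece by $\omega_i\delta_{i-1}$ using the $1$-Lipschitz property and the $\ell_\infty\!\to\!\ell_\infty$ bound on $(\A_i\mid\v b_i)$. Your explicit remark that $\|\hat f_{i-1}(\x)\|_\infty \le \ell_{i-1}$ should be carried as an inductive invariant (guaranteed by the range constraint on $\rho_{i-1}$ from \cref{lem:rat_approx_relu}) is exactly the point the paper records just before the lemma and invokes in the line ``we used $\|\m A_i \hat f_{i-1}(\x)+\v b_i\|_\infty \le \omega_i\ell_{i-1}$''.
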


\begin{proof}
    This result is standard, and similar results may be found in many other works. We include one here to make the narration self-contained.
    The calculation is
    \begin{align*}
        \gd_i 
        :=& 
        \sup_{\norm{\x}_{\infty} \leq 1} \|f_i(\x) - \hat{f}_i(\x)\|_{\infty}\\
        \leq&
        \sup_{\norm{\x}_{\infty} \leq 1} \|\gr_i(\m A_i \hat{f}_{i-1}(\x) + \v b_i) - \relu(\m A_i \hat{f}_{i-1}(\x) + \v b_i)\|_{\infty}\\
        &
        ~~~~+\sup_{\norm{\x}_{\infty} \leq 1} \|\relu(\m A_i \hat{f}_{i-1}(\x) + \v b_i) - \relu(\m A_i f_{i-1}(\x) + \v b_i)\|_{\infty}\\
        \leq&
        \eps_i + \go_i \gd_{i-1}.
    \end{align*}
    In the second line we used $\|\m A_i \hat{f}_{i-1}(\x) + \v b_i\|_{\infty} \leq \go_i\l_{i-1}$.
    The result is thus proved.
\end{proof}

In the next lemma, we control the degree of the coordinate functions of rational network $\hat{f}$, which is similar to \cref{lem:degree_ratnn}.

\begin{lemma} \label{lem:degree_rat_approx}
    Suppose rational activations $\gr_i$ at layer $i$ is in $R^{1+\gD_i}_{\gD_i}$ as suggested in \cref{lem:rat_approx_relu}.
    Then $\hat{f}_L \in R^{\theta_L}_{\theta_L}$ where $\theta_L = \prod_{i = 1}^L(1 + \gD_i) \prod_{j = 1}^{L-1}(1 + d_j)$.
\end{lemma}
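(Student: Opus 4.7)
The plan is to induct on the layer index $i$, showing that each coordinate of $\hat{f}_i$, viewed as a rational function in the parameters $(\m A_1,\v b_1,\ldots,\m A_i,\v b_i)$ with the input $\x$ fixed, lies in $R^{\theta_i}_{\theta_i}$, where $\theta_i := \prod_{k=1}^{i}(1+\gD_k)\prod_{j=1}^{i-1}(1+d_j)$. Specializing to $i=L$ yields the lemma. The argument closely mirrors \cref{lem:degree_single_layer}, and relies on two basic facts: the inclusion $R^a_b \subseteq R^{\max(a,b)}_{\max(a,b)}$, and the composition rule $R^s_s \circ R^a_a \subseteq R^{sa}_{sa}$ (which follows from expanding $r(p/q) = q^{-s}\sum_k r_k p^k q^{s-k}$ and cancelling the matching $q^s$ factors in the numerator and denominator of $r(p/q) = r_1(p/q)/r_2(p/q)$).

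For the base case $i=1$, each coordinate of $\m A_1 \x + \v b_1$ is affine in the parameters, hence lies in $R^1_0 \subseteq R^1_1$. Composing with $\gr_1 \in R^{1+\gD_1}_{\gD_1} \subseteq R^{1+\gD_1}_{1+\gD_1}$ places each coordinate of $\hat{f}_1$ in $R^{1+\gD_1}_{1+\gD_1}$, matching $\theta_1 = 1 + \gD_1$. For the inductive step, assuming each coordinate of $\hat{f}_{i-1}$ lies in $R^{\theta_{i-1}}_{\theta_{i-1}}$, each coordinate of the pre-activation $\m A_i \hat{f}_{i-1} + \v b_i$ is a sum of $d_{i-1}$ terms, each a product of a new parameter (degree $1$) with a coordinate of $\hat{f}_{i-1}$ (in $R^{\theta_{i-1}}_{\theta_{i-1}}$), plus a bias; placing this over a common denominator \emph{within that single output coordinate} yields an element of $R^{1+d_{i-1}\theta_{i-1}}_{d_{i-1}\theta_{i-1}}$. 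Composing with $\gr_i \in R^{1+\gD_i}_{\gD_i} \subseteq R^{1+\gD_i}_{1+\gD_i}$ via the composition rule then places each coordinate of $\hat{f}_i$ in $R^{(1+\gD_i)(1+d_{i-1}\theta_{i-1})}_{(1+\gD_i)(1+d_{i-1}\theta_{i-1})}$, giving the recurrence $\theta_i \leq (1+\gD_i)(1+d_{i-1}\theta_{i-1})$.

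The recurrence is closed using the simple estimate $1 + d_{i-1}\theta_{i-1} \leq (1+d_{i-1})\theta_{i-1}$, which holds because $\theta_{i-1} \geq 1+\gD_{i-1} \geq 1$ by the induction hypothesis. Unrolling from $\theta_1 = 1+\gD_1$ then yields $\theta_i \leq \prod_{k=1}^i(1+\gD_k)\prod_{j=1}^{i-1}(1+d_j)$, as claimed. The only real bookkeeping subtlety, and what I expect to be the easiest place to introduce a spurious factor, is that the common denominator must be taken coordinate-by-coordinate rather than across all output coordinates of $\hat{f}_i$; combining denominators across coordinates would pick up an extraneous $d_i$ factor at each layer and destroy the stated formula.
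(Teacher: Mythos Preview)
Your proof is correct and essentially identical to the paper's: both induct on the layer index to obtain the recurrence $\theta_{i} = (1+\gD_{i})(1+d_{i-1}\theta_{i-1})$ with $\theta_1 = 1+\gD_1$, then relax via $1+d_{i-1}\theta_{i-1} \le (1+d_{i-1})\theta_{i-1}$ to unroll into the stated product. The only cosmetic difference is that the paper retains the slightly sharper denominator degree $(1+d_{i-1}\theta_{i-1})(1+\gD_i)-1$ before relaxing to $R^{\theta_i}_{\theta_i}$, which is immaterial to the final bound.
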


\begin{proof}
    Since $\gr_1 \in R^{1 + \gD_1}_{\gD_1}$, we have $\hat{f}_1(\x) = \gr_1(\m A_1 \x + \v b_1) \in R^{1+\gD_1}_{\gD_1}$ as a function in $\gL$.
    Let $\theta_1 = 1 + \gD_1$. 
    Then
    $\hat{f}_1 \in R^{\theta_1}_{\theta_1}$. 
    Now $\hat{f}_2(\x) = \gr_2(\m A_2\hat{f}_1(\v x) + \v b_1) \in R^{1+ \gD_2}_{\gD_2} \circ (\gS_{d_1} R^{\theta_1}_{\theta_1} + R^1_0) \sset R^{(1 + d_1\theta_1)(1+\gD_2)}_{(1 + d_1\theta_1)(1+\gD_2) - 1} \in R^{\theta_2}_{\theta_2}$, where $\theta_2 = (1 + d_1\theta_1)(1 + \gD_2)$.
    Inductively repeating this argument, we have  $\hat{f}_L \in R^{\theta_L}_{\theta_L}$, where $\theta_{i+1} = (1 + d_i \theta_i)(1 + \gD_{i+1})$ with $\theta_0 = 0$.
    Since $1 + d_i\theta_i \leq (1 + d_i)\theta_i$,
    we have $\theta_L \leq \prod_{i = 1}^L (1 + \gD_i) \prod_{j = 1}^{L-1}(1 + d_j)$ as claimed.
\end{proof}

Now we combine everything together to prove \cref{thm:gen_err_common_full}. 
We will need to choose the quality of approximation at each layer.

\generrReLU*

\begin{proof}
    We will only prove the first bound. The simplified bound is an easy check.
    We continue to use notations $\gee_i$ and $\gd_i$ as defined in \cref{lem:propagation}.
    We set the approximation error of the first layer to be $|\gr_1(x) - \relu(x)| \leq \gee$ for all $x \in (-\go_1, \go_1)$. The value of $\gee$ will be chosen later.
    By definition, $\gd_1 = \gee$.
    For later layers, we choose $\gee_i = \gd_{i-1}$.
    From \cref{lem:propagation}, we have
    \begin{equation*}
        \gd_i \leq (1 + \go_i)\gd_{i-1},~i\geq 2,~\text{and}~\gd_L \leq \gee\cdot \prod_{i = 2}^L (1 + \go_i).
    \end{equation*}
    By \cref{lem:rat_approx_relu}, the required degree of each $\gr_i$ (with respect to $\Lambda$) is then bounded as
    \begin{equation*}
        \gr_i \in R^{1 + \gD_i}_{\gD_i},
        ~\text{where}~
        \gD_i \leq \frac{6}{\pi^2}\log_+^2(4 \go_i \l_{i-1} / \gd_{i-1}),~i\geq 2,
        ~\text{and}~
        \gD_1 \leq \frac{6}{\pi^2}\log_+^2(4 \go_1 / \eps).
    \end{equation*}
    By \cref{lem:degree_rat_approx}, the final approximation network is a rational map $\hat{f} \in R^{d_f}_{d_f}$ (with respect to $\Lambda$), where
    \begin{equation*}
        d_f = \prod_{i = 1}^L(1 + \gD_i) \prod_{j = 1}^{L-1}(1 + d_j).
    \end{equation*}  
    These rational networks approximating networks in $\cF = \relu(L, \v d, \v \go)$ form a new hypothesis class, denoted as $\widehat{\cF}$.
    For any $f \in \cF$, one can find $\hat{f} \in \widehat{\cF}$ so that $\|f(\x) - \hat{f}(\x)\|_{\infty} \leq \gee\prod_{i = 2}^L(1 + \go_i)$, for all $\|\x\|_{\infty}\leq 1$.
    Therefore,
    \begin{align*}
        \fR_S(\cF;\l) 
        &=
        \E_{\v \gs}\sup_{f\in \cF}\frac{1}{n}\sum_{i = 1}^n \l(f(\v x_i), y_i) \cdot \gs_i\\
        &=
        \E_{\v \gs}\frac{1}{n}\sum_{i = 1}^n \l(f_{\v \gs}(\v x_i), y_i) \cdot \gs_i\\
        &=
        \E_{\v \gs}\frac{1}{n}\sum_{i = 1}^n \Big(\l(f_{\v \gs}(\v x_i), y_i) - \l(\hat{f}_{\v \gs}(\v x_i), y_i)\Big)\cdot \gs_i + \l(\hat{f}_{\v \gs}(\v x_i), y_i) \cdot \gs_i\\
        &\leq 
        \E_{\v \gs}\norm{\l}_{\lip} \|f_{\v \gs} - \hat{f}_{\v \gs}\|_{\infty}
            +
        \fR_S(\widehat{\cF};\l)\\
        & \leq 
        \norm{\l}_{\lip}\gee \cdot \prod_{i = 2}^L(1 + \go_i) + \fR_S(\widehat{\cF};\l).
    \end{align*}
    In the second line $f_{\v \gs}$ is the maximizer of the inner product given $\v \gs$, which exists since the parameter space is compact.
    In the third line $\hat{f}_{\v \gs}$ is some function from $\widehat{\cF}$ that achieves the approximation error discussed before.
    With $\ga = \min\set{H, \norm{\ell}_{\lip}\sqrt{d_L}\prod_{i = 1}^L \go_i}$,
    following the proof of \cref{cor:gen_err_rational}, we have
    \begin{equation*}
        \fR_S(\widehat{\cF};\l) 
        \leq 
        \const \ga\sqrt{\frac{N}{n}}\left(\sqrt{\pi} + \sqrt{\log\frac{W\norm{\l}_{\lip}}{\ga\sqrt{n}}}\right),
    \end{equation*}
    where $N$ is the number of parameters in the network, and $W = N \left(\prod_{i = 1}^L \go_i\right) (nd_{L})^{5/2} d_f$.
    Simplifying this expression, we get
    \begin{equation*}
        \fR_S(\widehat{\cF};\l) \leq \const n^{-1/2} \ga \sqrt{N\left(\log(n) + \log\left(\frac{\norm{\l}_{\lip}}{\ga}\right) + \sum_{i = 1}^L \log(d_i\go_i(1 + \gD_i))\right)}.
    \end{equation*}
    We are left to choose $\eps$ and plug in the bound for $\gD_i$.
    We choose $\eps$ small so that $\norm{\l}_{\lip} \gee \prod_{i = 2}^L(1 + \go_i) = n^{-1/2} \ga$.
    Then for $i \geq 2$
    \begin{equation*}
        \frac{\go_i\l_{i-1}}{\gd_{i-1}} 
        \leq
        \frac{\prod_{j=1}^i\go_j}{\prod_{j = 2}^{i-1} (1 + \go_j)} \cdot \frac{\norm{\l}_{\lip}}{\ga}\sqrt{n}\prod_{k = 2}^L(1 + \go_k)
        \leq
        \go_i\frac{\norm{\l}_{\lip}}{\ga}\sqrt{n} \prod_{j = 1}^L(1 + \go_j).
    \end{equation*}
    Since $\go_i \geq 2$, $\log(1 + \gD_i) = \cO\Big(1 + \log_+\log_+({\ga}^{-1}\norm{\l}_{\lip}\cdot \sqrt{n}\cdot \prod_{i = 1}^L \go_i)\Big)$ for $i \geq 2$.
    One can verify that this is also true for $\gD_1$.
    Replacing $\gD_i$ by the bound above leads to the claimed result.
\end{proof}

\begin{remark}
    There are recent results on rational approximations to other  activation functions, such as $\tanh$ \cite{boulle2020rational}. 
    Therefore \cref{thm:general_cover_number} is applicable to networks with other activations too.
\end{remark}

\end{document}